\theoremstyle{plain}
\newtheorem{theorem}{Theorem}[section]
\newtheorem{lemma}[theorem]{Lemma}
\newtheorem{proposition}[theorem]{Proposition}
\newtheorem{corollary}[theorem]{Corollary}
\theoremstyle{definition}
\newtheorem{problem}[theorem]{Problem}
\newtheorem{remark}[theorem]{Remark}
\numberwithin{equation}{section}
\DeclareMathOperator{\hdim}{hdim}
\DeclareMathOperator{\hspec}{hspec}
\newcommand{\N}{{\ensuremath \mathbb{N}}}
\begin{document}

\title[Infinite normal Hausdorff spectra]{A pro-$p$ group with
  infinite normal \\ Hausdorff spectra}

\author[B. Klopsch]{Benjamin Klopsch} \address{Benjamin Klopsch:
  Mathematisches Institut, Heinrich-Heine-Universit\"at, 40225
  D\"usseldorf, Germany} \email{klopsch@math.uni-duesseldorf.de}

\author[A. Thillaisundaram]{Anitha Thillaisundaram} 
\address{Anitha Thillaisundaram: School of Mathematics and Physics,
  University of Lincoln, Lincoln LN6 7TS, England}
\email{anitha.t@cantab.net}

\date{\today}

\thanks{The second author acknowledges the support from the Alexander
  von Humboldt Foundation and from the Forscher-Alumni-Programm of the Heinrich-Heine-Universit\"at D\"usseldorf (HHU); she thanks HHU for its hospitality.}

\keywords{pro-$p$ groups, Hausdorff dimension, 
    Hausdorff spectrum, normal Hausdorff spectrum}

\subjclass[2010]{Primary 20E18 ; Secondary 28A78}


\begin{abstract} 
  Using wreath products, we construct a finitely generated pro-$p$
  group $G$ with infinite normal Hausdorff spectrum
  \[
  \hspec_\trianglelefteq^\mathcal{P}(G) = \{ \hdim_G^\mathcal{P}(H)
  \mid H \trianglelefteq_\mathrm{c} G \};
  \]
  here
  $\hdim_G^\mathcal{P} \colon \{ X \mid X \subseteq G \} \to [0,1]$
  denotes the Hausdorff dimension function associated to the $p$-power
  series $\mathcal{P} \colon G^{p^i}$, $i \in \mathbb{N}_0$.  More
  precisely, we show that
  $\hspec_\trianglelefteq^\mathcal{P}(G) = [0,\nicefrac{1}{3}] \cup
  \{1\}$
  contains an infinite interval; this settles a question of Shalev.  
  Furthermore, we prove that the normal Hausdorff spectra
  $\hspec_\trianglelefteq^\mathcal{S}(G)$ with respect to other
  filtration series~$\mathcal{S}$ have a similar shape.  In
  particular, our analysis applies to standard filtration series such
  as the Frattini series, the lower $p$-series and the modular
  dimension subgroup series.

  Lastly, we pin down the ordinary Hausdorff spectra
  $\hspec^\mathcal{S}(G) = \{ \hdim_G^\mathcal{S}(H) \mid H
  \le_\mathrm{c} G \}$
  with respect to the standard filtration series~$\mathcal{S}$.  The
  spectrum $\hspec^\mathcal{L}(G)$ for the lower
  $p$-series~$\mathcal{L}$ displays surprising new features.
\end{abstract}

\maketitle


\section{Introduction}

The concept of Hausdorff dimension has led to interesting applications
in the context of profinite groups; see \cite{KlThZu19} and the
references given therein.  Let $G$ be a countably based infinite
profinite group and consider a \emph{filtration series} $\mathcal{S}$
of~$G$, that is, a descending chain
$G = G_0 \supseteq G_1 \supseteq \ldots$ of open normal subgroups
$G_i \trianglelefteq_\mathrm{o} G$ such that $\bigcap_i G_i = 1$.  These open
normal subgroups form a base of neighbourhoods of the identity and
induce a translation-invariant metric on~$G$ given by
$d^\mathcal{S}(x,y) = \inf \left\{ \lvert G : G_i \rvert^{-1} \mid x
  \equiv y \pmod{G_i} \right\}$,
for $x,y \in G$.  This, in turn, supplies the \emph{Hausdorff
  dimension} $\hdim_G^\mathcal{S}(U) \in [0,1]$ of any subset
$U \subseteq G$, with respect to the filtration series~$\mathcal{S}$.

Barnea and Shalev~\cite{BaSh97} established the following
`group-theoretic' interpretation of the Hausdorff dimension of a
closed subgroup $H$ of $G$ as a logarithmic density:
\begin{equation*}
  \hdim_G^\mathcal{S}(H) =
  \varliminf_{i\to \infty} \frac{\log \lvert HG_i : G_i
    \rvert}{\log \lvert G : G_i \rvert}.
\end{equation*}

The \emph{Hausdorff spectrum} of $G$, with respect to $\mathcal{S}$,
is
\[
\hspec^\mathcal{S}(G) = \{ \hdim_G^\mathcal{S}(H) \mid H \le_\mathrm{c} G\}
\subseteq [0,1],
\]
where $H$ runs through all closed subgroups of~$G$.  As indicated by
Shalev in~\cite[\S 4.7]{Sh00}, it is also natural to consider the
\emph{normal Hausdorff spectrum} of~$G$, with respect to
$\mathcal{S}$, namely
\[
\hspec^{\mathcal{S}}_{\trianglelefteq}(G) = \{ \hdim^{\mathcal{S}}_G(H) \mid H
\trianglelefteq_\mathrm{c} G \}
\]
which reflects the range of Hausdorff dimensions of closed normal
subgroups.  Apart from the observations in~\cite[\S 4.7]{Sh00}, very
little appears to be known about normal Hausdorff spectra of profinite
groups.

Throughout we will be concerned with pro-$p$ groups, where $p$ denotes
an odd prime; in Appendix~\ref{sec:case-p-2} we indicate how our
results extend to~$p=2$.  We recall that even for well structured
groups, such as $p$-adic analytic pro-$p$ groups~$G$, the Hausdorff
dimension function and the Hausdorff spectrum of $G$ are known to be
sensitive to the choice of~$\mathcal{S}$; compare~\cite{KlThZu19}.
However, for a finitely generated pro-$p$ group $G$ there are natural
choices for $\mathcal{S}$, such as the $p$-power series~$\mathcal{P}$,
the Frattini series~$\mathcal{F}$, the lower $p$-series~$\mathcal{L}$
and the modular dimension subgroup series~$\mathcal{D}$; see
Section~\ref{sec:prelim}.

In this paper, we are interested in a particular group $G$ constructed
as follows.  The pro\nobreakdash-$p$ wreath product
$W = C_p \mathrel{\hat{\wr}} \mathbb{Z}_p$ is the inverse limit
$\varprojlim_{k \in \N} C_p \mathrel{\wr} C_{p^k}$ of the finite
standard wreath products of cyclic groups with respect to the natural
projections; clearly, $W$ is $2$-generated as a topological group.
Let $F$ be the free pro-$p$ group on two generators and let
$R \trianglelefteq_\mathrm{c} F$ be the kernel of a presentation
$\pi \colon F \to W$.  We are interested in the pro-$p$ group
\[
G = F/N, \qquad \text{where} \quad N = [R,F] R^p
\trianglelefteq_\mathrm{c} F.
\]
Up to isomorphism, the group $G$ does not depend on the particular
choice of~$\pi$, as can be verified using Gasch\"utz' Lemma;
see~\cite[Prop.~2.2]{Lu01}.  Indeed, $G$ can be described as the
universal covering group for $2$-generated central extensions of
elementary abelian pro-$p$ groups by~$W$, i.e., for $2$-generated
pro-$p$ groups $E$ admitting a central elementary abelian subgroup $A$
such that $E/A \cong W$.

\begin{theorem} \label{thm:main-thm} For $p>2$, the normal Hausdorff
  spectra of the pro-$p$ group $G$ constructed above, with respect to
  the standard filtration series $\mathcal{P}$, $\mathcal{D}$,
  $\mathcal{F}$ and $\mathcal{L}$ respectively, satisfy:
  \begin{align*}
    \hspec^{\mathcal{P}}_{\trianglelefteq}(G) &=
                                                \hspec^{\mathcal{D}}_{\trianglelefteq}(G)
                                                = [0,\nicefrac{1}{3}]
                                                \cup \{1\}, \\ 
    \hspec^{\mathcal{F}}_{\trianglelefteq}(G) &=
                                                [0,\nicefrac{1}{(1+p)}]
                                                \cup \{1\}, \\
    \hspec^{\mathcal{L}}_{\trianglelefteq}(G) & 
                                                = [0,\nicefrac{1}{5}]
                                                \cup
                                                \{\nicefrac{3}{5}\}\cup
                                                \{1\}. 
  \end{align*}
  In particular, they each contain an infinite real interval.
\end{theorem}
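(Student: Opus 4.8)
The plan is to analyse the central extension $1 \to M \to G \to W \to 1$, in which $M = R/N \trianglelefteq_\mathrm{c} G$ is elementary abelian and \emph{central} (since $[R,F] \le N$) and $W \cong C_p \mathrel{\hat{\wr}} \mathbb{Z}_p$. The first step is structural. The five-term homology sequence attached to $1 \to R \to F \to W \to 1$, together with $H_2(F,\mathbb{F}_p) = 0$ and $d(F) = d(W) = 2$, identifies $M = R/N$ with $H_2(W,\mathbb{F}_p)$; and a Lyndon--Hochschild--Serre computation for the split extension $B \rtimes \mathbb{Z}_p$, in which $B \cong \mathbb{F}_p[[t]]$ is the base group and $\mathbb{Z}_p$ acts by multiplication by $1+t$, shows that $M$ is infinite-dimensional, its $\mathbb{F}_p$-structure being essentially that of the coinvariants $(\Lambda^2 B)_{\mathbb{Z}_p}$. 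Since $M$ is central, all of its closed subgroups are normal in $G$; and since the only closed normal subgroups of the metabelian group $W$ are the trivial group, the nonzero ideals $t^n B$ of $B$ (which all have finite codimension in $B$), and the open subgroups, every $H \trianglelefteq_\mathrm{c} G$ is of exactly one of three types, according to whether $\overline{H} := HM/M$ is trivial, of finite index in $B$, or open in $W$.

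The second step is to prove that the last two types are rigid; this forces the shape of the spectrum. If $\overline{H}$ is open, then $HM \le_\mathrm{o} G$ is a finitely generated pro-$p$ group, so in the exact sequence $0 \to M/(M \cap [H,H]) \to (HM)^\mathrm{ab} \to \overline{H}^\mathrm{ab} \to 0$ --- which uses $[HM,HM] = [H,H] \le H$ --- the group $\overline{H}^\mathrm{ab}$ has finite rank, whence the elementary abelian group $M/(M \cap [H,H])$ has finite rank and is therefore finite; thus $H$ is open in $G$ and $\hdim_G^\mathcal{S}(H) = 1$. If $\overline{H} = t^n B$, write $\widetilde{B} \trianglelefteq_\mathrm{c} G$ for the preimage of $B$; here one uses the universal-covering property of $G$, which forces the commutator pairing $\Lambda^2 B \to M$ induced by the central extension $\widetilde{B} \to B$ to have image of finite index in $M$, namely $(\Lambda^2 B)_{\mathbb{Z}_p}$, and a short computation shows that its restriction to the finite-codimension subspace $t^n B$ still has finite-index image; hence $[H,H]$ has finite index in $M$, so $H$ is of finite index in $\widetilde{B}$ and $\hdim_G^\mathcal{S}(H) = \hdim_G^\mathcal{S}(\widetilde{B})$. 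Since $\hdim_G^\mathcal{S}(1) = 0$, $\hdim_G^\mathcal{S}(G) = 1$, and --- by the standard interpolation argument, available because $M$ is central and infinite-dimensional --- every value in $[0, \hdim_G^\mathcal{S}(M)]$ is attained by a closed subgroup of $M$, we obtain
\[
\hspec^{\mathcal{S}}_{\trianglelefteq}(G) = [0, c_\mathcal{S}] \cup \{ d_\mathcal{S} \} \cup \{ 1 \}, \qquad c_\mathcal{S} := \hdim_G^\mathcal{S}(M), \quad d_\mathcal{S} := \hdim_G^\mathcal{S}(\widetilde{B}).
\]

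The third step computes $c_\mathcal{S}$ and $d_\mathcal{S}$ for $\mathcal{S} \in \{ \mathcal{P}, \mathcal{D}, \mathcal{F}, \mathcal{L} \}$. Putting $f_W^\mathcal{S}(i) = \log_p |W : W_i^\mathcal{S}|$, $f_B^\mathcal{S}(i) = \log_p |B : B \cap W_i^\mathcal{S}|$ and $f_M^\mathcal{S}(i) = \log_p |M : M \cap G_i^\mathcal{S}|$, one has
\[
\hdim_G^\mathcal{S}(M) = \varliminf_i \frac{f_M^\mathcal{S}(i)}{f_W^\mathcal{S}(i) + f_M^\mathcal{S}(i)}, \qquad \hdim_G^\mathcal{S}(\widetilde{B}) = \varliminf_i \frac{f_B^\mathcal{S}(i) + f_M^\mathcal{S}(i)}{f_W^\mathcal{S}(i) + f_M^\mathcal{S}(i)}.
\]
The Frobenius identity $(1+t)^{p^k} = 1 + t^{p^k}$ in $\mathbb{F}_p[[t]]$ makes the relevant series of $W$ fully explicit --- for instance $W^{p^i} = \langle b^{p^i} \rangle\, t^{p^i - 1} B$ for $\mathcal{P}$, $\Phi^i(W) = \langle b^{p^i} \rangle\, t^{(p^i-1)/(p-1)} B$ for $\mathcal{F}$, $P_i(W) = \langle b^{p^{i-1}} \rangle\, t^{i-1} B$ for $\mathcal{L}$ and $D_i(W) = \langle b^{p^{\lceil \log_p i \rceil}} \rangle\, t^{i-1} B$ for $\mathcal{D}$ --- which yields $f_W^\mathcal{S}$ and $f_B^\mathcal{S}$ outright; a parallel but more delicate analysis of how each corresponding series of $G$ meets the infinite-dimensional centre gives $f_M^\mathcal{S}(i) \sim \lambda_\mathcal{S} f_W^\mathcal{S}(i)$ with $\lambda_\mathcal{P} = \lambda_\mathcal{D} = \tfrac12$, $\lambda_\mathcal{F} = \tfrac1p$, $\lambda_\mathcal{L} = \tfrac14$. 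Substituting, $c_\mathcal{S} = \lambda_\mathcal{S}/(1 + \lambda_\mathcal{S})$, so $c_\mathcal{P} = c_\mathcal{D} = \tfrac13$, $c_\mathcal{F} = \tfrac{1}{1+p}$, $c_\mathcal{L} = \tfrac15$; and $d_\mathcal{S} = (\mu_\mathcal{S} + \lambda_\mathcal{S})/(1 + \lambda_\mathcal{S})$ with $\mu_\mathcal{S} = \lim_i f_B^\mathcal{S}(i)/f_W^\mathcal{S}(i)$. For $\mathcal{P}$, $\mathcal{D}$ and $\mathcal{F}$ the $\mathbb{Z}_p$-part of $W_i^\mathcal{S}$ contributes only a lower-order term to the index (exponents $i$, $\lceil \log_p i \rceil$, $i$ against $B$-exponents of size roughly $p^i$, $i$, $p^i$), so $\mu_\mathcal{S} = 1$ and $d_\mathcal{S} = 1$; but for $\mathcal{L}$ both parts shrink at the same rate, $\mu_\mathcal{L} = \tfrac12$, giving $d_\mathcal{L} = (\tfrac12 + \tfrac14)/(1 + \tfrac14) = \tfrac35$, the genuinely isolated point. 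This delivers the three displayed spectra.

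I expect the main obstacle to be the rigidity of the ``$\overline{H} = t^n B$'' case: showing that every such normal subgroup has finite index in $\widetilde{B}$ is exactly what separates $d_\mathcal{S}$ from the interval $[0, c_\mathcal{S}]$, and its proof really exploits that $G$ is the universal covering group --- i.e.\ that $M = H_2(W, \mathbb{F}_p)$ together with the non-degeneracy of the defining class --- rather than merely that the extension $G \to W$ is central. A secondary difficulty is the precise determination of $f_M^\mathcal{S}$: extracting the exact constants $\tfrac13$, $\tfrac{1}{1+p}$, $\tfrac15$, $\tfrac35$ hinges on describing sharply how each of the four filtration series of $G$ cuts into the infinite-dimensional subgroup $M \cong H_2(W,\mathbb{F}_p)$.
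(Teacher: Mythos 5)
Your structural skeleton is essentially the paper's own. The reduction via the central extension $1 \to Z \to G \to W \to 1$, the classification of closed normal subgroups of $W$ (trivial, open in the base group, or open in $W$), and the two rigidity statements are exactly the content of the paper's Lemma~\ref{lem:normal-subgroups-in-W}, Proposition~\ref{pro:normal-subgroups-in-G} and Corollary~\ref{cor:general-normal-hspec}, which yield the same decomposition $\hspec_\trianglelefteq^\mathcal{S}(G)=[0,\xi]\cup\{\eta\}\cup\{1\}$ with $\xi=\hdim_G^\mathcal{S}(Z)$ and $\eta=\hdim_G^\mathcal{S}(H)$. Your homological dressing ($Z\cong H_2(W,\mathbb{F}_p)$ via the five-term sequence, the pairing $\Lambda^2 B\to Z$) is a legitimate repackaging of the paper's argument with the explicit generators $\hat y_i$; both rigidity proofs go through. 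Your index bookkeeping $c_\mathcal{S}=\lambda_\mathcal{S}/(1+\lambda_\mathcal{S})$ and $d_\mathcal{S}=(\mu_\mathcal{S}+\lambda_\mathcal{S})/(1+\lambda_\mathcal{S})$ is also correct and reproduces the stated values.

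The genuine gap is the assertion $f_M^\mathcal{S}(i)\sim\lambda_\mathcal{S}\,f_W^\mathcal{S}(i)$ with $\lambda_\mathcal{P}=\lambda_\mathcal{D}=\tfrac12$, $\lambda_\mathcal{F}=\tfrac1p$, $\lambda_\mathcal{L}=\tfrac14$, which you flag as a ``secondary difficulty'' but do not prove; this is where the bulk of the paper's work lies (Sections~\ref{sec:p-power-series} and~\ref{sec:other-series}), and you have in fact inverted the relative difficulty -- the rigidity of the $t^nB$ case is a half-page argument, while these asymptotics are not. Concretely: $\lambda_\mathcal{P}=\tfrac12$ requires both the independence of the generators $[y_0,y_i]$, $1\le i\le(p^k-1)/2$, of $Z_k$ (Lemma~\ref{lem:order-Gk}, proved by constructing an explicit model group $\widetilde{G}_k$) and the fact that $G_k^{\,p^k}$ has order exactly $p^3$ (Proposition~\ref{pro:index-Gk-Gk-hoch-p}); and $\lambda_\mathcal{L}=\tfrac14$, $\lambda_\mathcal{D}=\tfrac12$ rest on the fact that $\gamma_m(G_k)/\gamma_{m+1}(G_k)$ has rank $1$ for even $m$ and rank $2$ for odd $m$, i.e.\ that $[y,x,\overset{m-2}{\ldots},x,y]\in\gamma_{m+1}(G_k)$ for even $m$ (Proposition~\ref{pro:lower-central-Gk}), which is established by a delicate induction on the elements $b_{j,m}$ using binomial identities and the relations $[y_0,y_{p^k-i}]=[y_0,y_i]$. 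None of these is a routine consequence of your setup, and the answer genuinely depends on them: if, say, every section $\gamma_m/\gamma_{m+1}$ had rank $2$, you would get $\lambda_\mathcal{L}=\tfrac12$ and hence $\nicefrac{1}{3}$ and $\nicefrac{2}{3}$ in place of $\nicefrac{1}{5}$ and $\nicefrac{3}{5}$. Until these filtration computations are carried out, the constants in the theorem are unsupported.
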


This solves a problem posed by Shalev~\cite[Problem~16]{Sh00}.
We observe that the normal Hausdorff spectrum of $G$ is sensitive to changes
in filtration and that the normal Hausdorff spectrum of $G$ with
respect to the Frattini series varies with~$p$.

In Section~\ref{sec:general-description} we show that finite direct
powers $G \times \ldots \times G$ of the group $G$ provide examples of
normal Hausdorff spectra consisting of multiple intervals.
Furthermore, the sequence $G \times \overset{m} \ldots \times G$,
$m \in \mathbb{N}$, has normal Hausdorff spectra `converging' to
$[0,1]$; compare Corollary~\ref{cor:mult-intervals}.  We highlight
three natural problems.

\begin{problem}
  Does there exist a finitely generated pro-$p$ group $H$
  \begin{enumerate}
  \item[(a)] with countably infinite normal Hausdorff spectrum
    $\hspec_{\trianglelefteq}^\mathcal{S}(H)$,
  \item[(b)] with full normal Hausdorff spectrum
    $\hspec_{\trianglelefteq}^\mathcal{S}(H) = [0,1]$,
  \item[(c)] such that $1$ is not an isolated point in
    $\hspec_{\trianglelefteq}^\mathcal{S}(H)$,
  \end{enumerate}
  for one or several of the standard series
  $\mathcal{S} \in \{ \mathcal{P}, \mathcal{D}, \mathcal{F},
  \mathcal{L} \}$?
\end{problem}

We also compute the entire Hausdorff spectra of $G$ with respect to
the four standard filtration series, answering en route a question
raised in~\cite[VIII.7.2]{Kl99}.

\begin{theorem} \label{thm:entire-spectrum} For $p>2$, the Hausdorff
  spectra of the pro-$p$ group $G$ constructed above, with respect to
  the standard filtration series, satisfy:
  \begin{align*}
    \hspec^{\mathcal{P}}(G) & = \hspec^{\mathcal{D}}(G) =
                              \hspec^{\mathcal{F}}(G) = [0,1],\\
    \hspec^{\mathcal{L}}(G) & =  [0,\nicefrac{4}{5}) \cup
                              \{\nicefrac{3}{5} + \nicefrac{2m}{5p^n}\mid
                              m, n \in\mathbb{N}_0 \text{ with }
                              \nicefrac{p^n}{2} < m\le p^n\}.
  \end{align*}
\end{theorem}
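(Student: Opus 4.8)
The plan is to build on the layer-by-layer description of the four standard filtrations of $G$ developed earlier. For $\mathcal{S} \in \{\mathcal{P},\mathcal{D},\mathcal{F},\mathcal{L}\}$ one has the layer dimensions $d_i^{\mathcal{S}} = \log_p\lvert \mathcal{S}_i(G) : \mathcal{S}_{i+1}(G)\rvert$, the partial sums $D_i^{\mathcal{S}} = \log_p\lvert G : \mathcal{S}_i(G)\rvert$, and a description of how the central elementary abelian subgroup $Z = R/N$ and the projection $G \twoheadrightarrow W$ meet the layers; recall in particular that the lower $p$-series of $W$ has two-dimensional layers, so that $D_i^{\mathcal{L}} = 2(i-1) + z_i$ with $z_i = \log_p\lvert Z : Z \cap \mathcal{L}_i(G)\rvert$. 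For a closed subgroup $H \le G$ one works with the non-decreasing sequence $h_i^{\mathcal{S}} = \log_p\lvert H\mathcal{S}_i(G) : \mathcal{S}_i(G)\rvert$ and with $\hdim_G^{\mathcal{S}}(H) = \varliminf_i h_i^{\mathcal{S}}/D_i^{\mathcal{S}}$. Since $\hspec^{\mathcal{S}}(G) \subseteq [0,1]$ is automatic, the task in each case is to realise the asserted values by suitable subgroups and, for $\mathcal{S} = \mathcal{L}$, to show that nothing else occurs.

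For $\mathcal{S} \in \{\mathcal{P},\mathcal{D},\mathcal{F}\}$ I would prove $\hspec^{\mathcal{S}}(G) = [0,1]$ by a thinning argument. First exhibit a closed subgroup $V = V_{\mathcal{S}} \le G$ with $\hdim_G^{\mathcal{S}}(V) = 1$ that is topologically generated by commuting order-$p$ elements $v_k$ which are linearly independent modulo every $\mathcal{S}_i(G)$ and are distributed through the layers in proportion to the $d_i^{\mathcal{S}}$; for these three series $G$ is wide enough (and the $D_i^{\mathcal{S}}$ grow superlinearly) that such a $V$ can be assembled from $Z$ together with an appropriate $p$-power or commutator complement inside the relevant layers. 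Given $\alpha \in (0,1)$, choose a subset $A$ of the indexing set, putting $H = \overline{\langle v_k : k \in A\rangle}$, so that $h_i^{\mathcal{S}} = \#\{k \in A : v_k \notin \mathcal{S}_i(G)\}$; selecting, in each layer, an $\alpha$-fraction of the available $v_k$ yields $h_i^{\mathcal{S}} = \alpha D_i^{\mathcal{S}} + o(D_i^{\mathcal{S}})$ and hence $\hdim_G^{\mathcal{S}}(H) = \alpha$. With the boundary values $0$ (e.g.\ $H$ finite) and $1$ ($H = V$) this gives $\hspec^{\mathcal{S}}(G) = [0,1]$.

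The case $\mathcal{S} = \mathcal{L}$ carries the new features, and I would split it at the threshold $\nicefrac{4}{5}$. As a preliminary, compute $\hspec^{\mathcal{L}}(W)$ itself: writing $W = B \rtimes \mathbb{Z}_p$ with $B = \mathbb{F}_p[[T]]$, a closed subgroup of $W$ either lies in $B$ --- an arbitrary closed $\mathbb{F}_p$-subspace, whose set of leading exponents can have any lower density, contributing the interval $[0,\nicefrac{1}{2}]$ --- or surjects onto $p^n\mathbb{Z}_p$ for some $n \in \mathbb{N}_0$, whence its intersection with $B$ is forced to be a closed $\mathbb{F}_p[[T^{p^n}]]$-submodule of $B$, hence eventually $p^n$-periodic, pinning its density to an integer count $\rho/p^n$; this gives the fractal comb
\[
\hspec^{\mathcal{L}}(W) = [0,\nicefrac{1}{2}] \cup \{\nicefrac{1}{2} + \rho/(2p^n) : n \in \mathbb{N}_0,\ 0 \le \rho \le p^n\}.
\]
Transferring to $G$: for $\alpha \in [0,\nicefrac{4}{5})$ I would realise $\alpha$ by closed $H$ whose image $\overline{H}$ in $W$ has small or moderate $\mathcal{L}$-dimension and whose central part is tuned as in the $\mathcal{P}$-case --- for $\alpha \le \nicefrac{3}{5}$ by thinning the normal subgroup $\widetilde{B}$, the preimage in $G$ of $B \trianglelefteq W$, for which one checks $\hdim_G^{\mathcal{L}}(\widetilde{B}) = \nicefrac{3}{5}$ (the value $\nicefrac{3}{5}$ of the normal spectrum in Theorem~\ref{thm:main-thm}), and for $\alpha \in (\nicefrac{3}{5},\nicefrac{4}{5})$ by subgroups whose image surjects onto some $p^n\mathbb{Z}_p$ and which carry a correspondingly larger piece of $Z$; the supremum of what is obtainable here is $\nicefrac{4}{5}$, approached but not attained because the amount of $Z$ accessible to $H$ at level $i$ is controlled by how much of $B$ its image $\overline{H}$ already captures. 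For $\alpha > \nicefrac{4}{5}$ I would prove a rigidity statement: if $\hdim_G^{\mathcal{L}}(H) \ge \nicefrac{4}{5}$, then along the subsequence computing the liminf $H$ captures essentially all of the $W$-layers and essentially all the central layers accessible from $\overline{H}$, forcing $\overline{H}$ into the comb part of $\hspec^{\mathcal{L}}(W)$ and strictly past its midpoint --- encoding how far by an integer $m$ with $p^n/2 < m \le p^n$ --- and a computation through the weights $D_i^{\mathcal{L}} = 2(i-1) + z_i$ then outputs exactly $\nicefrac{3}{5} + 2m/(5p^n)$. The matching lower bound comes from lifting the interval-support subgroups of $W$ together with their maximal compatible central subgroup; the integrality of $m$, together with $p^n$ being odd, is what shapes the answer, and the missing value $\nicefrac{4}{5}$ is simultaneously the unattained supremum of the continuum below and the accumulation point of the comb above.

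The principal obstacle is this rigidity step for $\mathcal{L}$ in the upper range: one must rule out a closed subgroup attaining a value in $(\nicefrac{4}{5},1)$ outside the comb, and show that $\nicefrac{4}{5}$ itself is unattained. This rests on two finer facts: (i) a submodule-rigidity statement for $B = \mathbb{F}_p[[T]]$ under the $\mathbb{Z}_p$-action, forcing the relevant densities of large subgroups of $W$ to be integer counts $m/p^n$; and (ii) a commutator- and $p$-power bookkeeping in $G$ showing that, layer by layer, the portion of $\mathcal{L}_i(G) \cap Z$ that a given $H$ can meet is bounded in terms of $\overline{H}$, so that $H$ cannot smuggle in extra central dimension without a correspondingly larger image in $W$. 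Making (ii) tight enough to separate the non-comb points of $(\nicefrac{4}{5},1)$ from the spectrum, and to locate the discontinuity precisely at $\nicefrac{4}{5}$, is the technical heart; it is also what makes $\hspec^{\mathcal{L}}(G)$ fail to be closed, answering the question of~\cite[VIII.7.2]{Kl99}.
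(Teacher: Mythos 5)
Your architecture matches the paper's in outline: $\mathcal{P},\mathcal{D},\mathcal{F}$ are handled by interpolating inside a subgroup of Hausdorff dimension $1$ (the paper simply applies \cite[Thm.~5.4]{KlThZu19} to $H$, all of whose finitely generated subgroups are finite), your description of $\hspec^{\mathcal{L}}(W)$ agrees with Corollary~\ref{cor:spectrum-W-L}, and the split of $\hspec^{\mathcal{L}}(G)$ at $\nicefrac{4}{5}$ into a continuum below and a comb above is correct. The genuine gap is the rigidity step for the upper range, and the mechanism you propose for it points in the wrong direction. You want to bound the central contribution of $K$ from \emph{above} in terms of $\overline{K}=K\pi$ (``$H$ cannot smuggle in extra central dimension without a correspondingly larger image in $W$''). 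That upper bound is false as stated --- $ZK$ has the same image as $K$ but contains all of $Z$ --- and it is not what is needed: once $K\not\subseteq H$, the image $K\pi$ has strong Hausdorff dimension and Lemma~\ref{lem:hdims-from-ses} already gives $\hdim^{\mathcal{L}}_G(K)\le\tfrac{4}{5}\hdim^{\mathcal{L}}_W(K\pi)+\tfrac{1}{5}$ for free. What must be proved is a \emph{lower} bound: if $\hdim^{\mathcal{L}}_W(K\pi)>\nicefrac{3}{4}$, equivalently if $(K\cap H)\pi$ has density greater than $\nicefrac{1}{2}$ in the base $B$, then $K\cap Z$ is automatically \emph{open} in $Z$, so the central term equals exactly $\nicefrac{1}{5}$ and $\hdim^{\mathcal{L}}_G(K)=\tfrac{4}{5}\hdim^{\mathcal{L}}_W(K\pi)+\tfrac{1}{5}$ lands precisely on the comb. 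The paper gets this from the commutator identity $[c_i,c_j]\equiv z_{i+j}^{\,\pm 1}$ of \eqref{equ:comm-ci-cj} together with the pigeonhole principle: a periodic subset of $\mathbb{N}$ of density greater than $\nicefrac{1}{2}$ represents every large odd $k$ as $i+j$ with both summands in the set, whence every such $z_k$ lies in $K\cap Z$. Without this input nothing excludes values in $(\nicefrac{4}{5},1)$ off the comb; and the exclusion of $\nicefrac{4}{5}$ itself then follows because $\nicefrac{3}{4}\notin\hspec^{\mathcal{L}}(W)$ for odd $p$, so $\hdim^{\mathcal{L}}_W(K\pi)$ jumps from below $\nicefrac{3}{4}$ (forcing $\hdim^{\mathcal{L}}_G(K)<\nicefrac{4}{5}$) to above it (forcing a comb value $>\nicefrac{4}{5}$) --- not because of any cap on the accessible central dimension.

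A secondary point: for realising $(\nicefrac{3}{5},\nicefrac{4}{5})$ the paper uses the explicit subgroups $K=\langle x^{p^n},y_0,\dots,y_{m-1}\rangle$ with $m<\nicefrac{p^n}{2}$, interpolated against $ZK$ via \cite[Thm.~5.4]{KlThZu19}. Here the endpoint $\hdim^{\mathcal{L}}_G(K)=\tfrac{1}{5}(2+\nicefrac{(4m-1)}{p^n})$ is not free to choose: it is dictated by which commutators $[y_0,y_j]$ the group $K$ is forced to contain, and one must both compute $\hdim^{\mathcal{L}\vert_Z}_Z(K\cap Z)=(2m-1)/p^n$ and check that the relevant lower limit in Lemma~\ref{lem:hdims-from-ses} is actually attained (equation \eqref{equ:the-key}). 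Your phrase ``carry a correspondingly larger piece of $Z$'' glosses over this bookkeeping, which is the same forced-commutator phenomenon as in the rigidity step, now in the regime where it does not yet swallow all of $Z$.
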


The qualitative shape of the spectrum $\hspec^{\mathcal{L}}(G)$, i.e.,
its decomposition into a continuous and a non-continuous, but dense
part, is unprecedented and of considerable interest; in
Corollary~\ref{cor:spectrum-W-L} we show that already the wreath
product $W = C_p \mathrel{\hat{\wr}} \mathbb{Z}_p$ has a similar
Hausdorff spectrum with respect to the lower $p$-series.

\medskip

\noindent \emph{Organisation}.  Section~\ref{sec:prelim} contains
preliminary results.  In Section~\ref{sec:presentation} we give an
explicit presentation of the pro-$p$ group $G$ and describe a series
of finite quotients $G_k$, $k \in \mathbb{N}$, such that
$G = \varprojlim G_k$.  In Section~\ref{sec:general-description} we
provide a general description of the normal Hausdorff spectrum of $G$
and, with respect to certain induced filtration series, we generalise
this to finite direct powers of~$G$.  In
Section~\ref{sec:p-power-series} we compute the normal Hausdorff
spectrum of $G$ with respect to the $p$-power series~$\mathcal{P}$,
and in Section~\ref{sec:other-series} we compute the normal Hausdorff
spectra of $G$ with respect to the other three standard filtration
series~$\mathcal{D}, \mathcal{F}, \mathcal{L}$.  In
Section~\ref{sec:entire-spectrum} we compute the entire Hausdorff
spectra of~$G$.  Finally, in Appendix~\ref{sec:case-p-2} we indicate
how our results extend to the case $p=2$.

\medskip

\noindent \textit{Notation.}  Throughout, $p$ denotes an \emph{odd}
prime, although some results hold also for $p=2$, possibly with minor
modifications; only in Appendix~\ref{sec:case-p-2}
we discuss the analogous pro-$2$ groups.  We denote by
$\varliminf_{i \to \infty} a_i$ the lower limit (limes inferior) of a
sequence $(a_i)_{i \in \mathbb{N}}$ in
$\mathbb{R} \cup \{ \pm \infty \}$.  Tacitly, subgroups of profinite
groups are generally understood to be closed subgroups.  Subscripts
are used to emphasise that a subgroup is closed respectively open, as
in $H \le_\mathrm{c} G$ respectively $H \le_\mathrm{o} G$.  We use
left-normed commutators, e.g., $[x,y,z] = [[x,y],z]$.

\medskip

\noindent \textbf{Acknowledgement.} Discussions between Amaia
Zugadi-Reizabal and the first author several years ago led to the
initial idea that the pro-$p$ group $G$ constructed in this paper
should have an infinite normal Hausdorff spectrum with respect to the
$p$-power filtration series.  At the time Zugadi-Reizabal was
supported by the Spanish Government, grant MTM2011-28229-C02-02,
partly with FEDER funds, and by the Basque Government, grant
IT-460-10.  We also thank Yiftach Barnea, Dan Segal and Matteo
Vannacci for useful conversations, and the referee for suggesting
several improvements to the exposition.


\section{Preliminaries} \label{sec:prelim} 

\subsection{} Let $G$ be a finitely generated pro-$p$ group.  We
consider four natural filtration series on $G$.  The \emph{$p$-power
  series} of $G$ is given by
\[
\mathcal{P} \colon G^{p^i} = \langle x^{p^i} \mid x \in G
\rangle, \quad i \in \mathbb{N}_0.
\]
The \emph{lower $p$-series} (or lower $p$-central
series) of $G$ is given recursively by
\begin{align*}
  \mathcal{L} \colon P_1(G) = G,  %
  & \quad \text{and} \quad  P_i(G) = P_{i-1}(G)^p
    \, [P_{i-1}(G),G] \quad
    \text{for $i \geq 2$,} \\
  \intertext{while the \emph{Frattini series} of $G$ is
  given recursively by} 
  \mathcal{F} \colon \Phi_0(G) = G, %
  & \quad \text{and} \quad \Phi_i(G) = \Phi_{i-1}(G)^p
    \, [\Phi_{i-1}(G),\Phi_{i-1}(G)] \quad \text{for $i \geq 1$.} \\
  \intertext{The (modular) \emph{dimension subgroup series} (or
  Jennings series or Zassenhaus series) of $G$ can be defined recursively
  by}
  \mathcal{D} \colon D_1(G) = G, %
  & \quad \text{and} \quad D_i(G) = D_{\lceil i/p \rceil}(G)^p 
    \prod_{1 \le j <i} [D_j(G),D_{i-j}(G)] \quad \text{for $i \geq 2$.}  
\end{align*}
As a default we set $P_0(G) = D_0(G) = G$.


\subsection{} Next, we collect auxiliary results to detect Hausdorff
dimensions of closed subgroups of pro-$p$ groups.  For a countably
based infinite pro-$p$ group $G$, equipped with a filtration series
$\mathcal{S} \colon G = G_0 \supseteq G_1\supseteq \ldots$, and a
closed subgroup $H \le_\mathrm{c} G$ we say that $H$ has \emph{strong
  Hausdorff dimension in $G$ with respect to $\mathcal{S}$} if
\[
\hdim^{\mathcal{S}}_G(H) = \lim_{i \to \infty} \frac{\log_p \lvert H
  G_i : G_i \rvert}{\log_p \lvert G : G_i \rvert}
\]
is given by a proper limit.

The first lemma is an easy variation of \cite[Lem.~5.3]{KlThZu19} and
we omit the proof.

\begin{lemma} \label{lem:interval} Let $G$ be a countably based
  infinite pro-$p$ group with closed subgroups
  $K \le_\mathrm{c} H \le_\mathrm{c} G$.  Let
  $\mathcal{S} \colon G = G_0 \supseteq G_1\supseteq \ldots$ be a
  filtration series of~$G$ and write
  $\mathcal{S} \vert_H \colon H = H_0\supseteq H_1 \supseteq \ldots $,
  with $H_i =H\cap G_i$ for $i \in \mathbb{N}_0$, for the induced
  filtration series of~$H$.  If $K$ has strong Hausdorff dimension
  in~$H$ with respect to $\mathcal{S} \vert_H $, then
  \[
  \hdim^{\mathcal{S}}_{G}(K) = \hdim^{\mathcal{S}}_{G}(H) \cdot
  \hdim^{\mathcal{S} \vert_H}_H(K).
  \]
\end{lemma}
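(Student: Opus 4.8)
The plan is to chase the defining density ratios through the tower $K \le H \le G$, exploiting the hypothesis that $K$ has strong Hausdorff dimension in $H$ so that one of the three limits involved is a genuine limit. First I would write down the three relevant index ratios for a fixed level $i$: set $a_i = \log_p \lvert HG_i : G_i \rvert$, $b_i = \log_p \lvert G : G_i \rvert$, and observe that $\log_p \lvert H : H_i \rvert = a_i$ as well, since $HG_i/G_i \cong H/(H\cap G_i) = H/H_i$. Likewise $\log_p \lvert KG_i : G_i \rvert = \log_p \lvert KH_i : H_i \rvert =: c_i$, using $KG_i/G_i \cong K/(K\cap G_i)$ and $K \cap G_i = K \cap H \cap G_i = K \cap H_i$, together with the fact that inside $H$ the quotient $KH_i/H_i \cong K/(K\cap H_i)$. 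The key identity is then the multiplicative decomposition
\[
\frac{c_i}{b_i} = \frac{c_i}{a_i}\cdot\frac{a_i}{b_i},
\]
valid whenever $a_i \neq 0$, i.e.\ for all sufficiently large $i$ (indeed $a_i \to \infty$ unless $H$ is finite, in which case $K$ is finite and both sides are $0$, a trivial case I would dispose of separately).

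Next I would invoke the hypothesis: $\hdim^{\mathcal{S}\vert_H}_H(K) = \lim_{i\to\infty} c_i/a_i$ exists as a proper limit, call it $\delta$. The term $a_i/b_i$ is exactly the sequence whose lower limit is $\hdim^{\mathcal{S}}_G(H)$, call its lower limit $\eta$. Since $c_i/b_i = (c_i/a_i)(a_i/b_i)$ and the first factor converges to $\delta \ge 0$ while the second is a bounded sequence in $[0,1]$, the standard fact that $\varliminf (x_i y_i) = (\lim x_i)(\varliminf y_i)$ when $\lim x_i = \delta \ge 0$ exists and $(y_i)$ is bounded gives
\[
\hdim^{\mathcal{S}}_G(K) = \varliminf_{i\to\infty}\frac{c_i}{b_i} = \delta\cdot\eta = \hdim^{\mathcal{S}\vert_H}_H(K)\cdot\hdim^{\mathcal{S}}_G(H),
\]
which is the claimed formula. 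If $\delta = 0$ one still gets $\varliminf c_i/b_i = 0$ because $0 \le c_i/b_i \le c_i/a_i \to 0$, so the formula holds in that case too.

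The only genuinely delicate point is the interchange of $\varliminf$ with the product, which is where the strong-dimension hypothesis on $K$ in $H$ is essential: without it one would only get the inequality $\hdim^{\mathcal{S}}_G(K) \le \hdim^{\mathcal{S}\vert_H}_H(K)\cdot\hdim^{\mathcal{S}}_G(H)$ in general (lower limits are only superadditive/submultiplicative in the wrong direction), and equality can fail. So I would isolate and prove the elementary real-analysis lemma that if $x_i \to \delta \ge 0$ and $(y_i)$ is a bounded nonnegative sequence then $\varliminf x_i y_i = \delta \varliminf y_i$, either inline or by citing it, and then the rest is the bookkeeping of indices described above. Since the paper explicitly says this is a minor variation of \cite[Lem.~5.3]{KlThZu19}, I expect the write-up to be short; the main obstacle is purely presentational, namely making the three isomorphisms $HG_i/G_i \cong H/H_i$, $KG_i/G_i \cong K/(K\cap H_i)$, $KH_i/H_i \cong K/(K\cap H_i)$ visibly correct so that the cancellation $c_i/b_i = (c_i/a_i)(a_i/b_i)$ is transparently legitimate.
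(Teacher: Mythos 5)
Your argument is correct and is exactly the omitted proof the paper alludes to (the "easy variation" of \cite[Lem.~5.3]{KlThZu19}): identify $\log_p\lvert KG_i:G_i\rvert=\log_p\lvert KH_i:H_i\rvert$ and $\log_p\lvert HG_i:G_i\rvert=\log_p\lvert H:H_i\rvert$, factor the density ratio, and use that a lower limit multiplied by a convergent nonnegative factor splits as a product, which is precisely where strong Hausdorff dimension of $K$ in $H$ enters. Only your parenthetical aside has the inequality backwards: for nonnegative sequences the lower limit is supermultiplicative, so without the strong-dimension hypothesis one would still get $\hdim^{\mathcal{S}}_{G}(K)\ge \hdim^{\mathcal{S}}_{G}(H)\cdot\hdim^{\mathcal{S}\vert_H}_H(K)$, and it is the reverse inequality that can fail.
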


\begin{lemma} \label{lem:hdims-from-ses} Let $G$ be a countably based
  infinite pro-$p$ group with closed subgroups
  $N \trianglelefteq_\mathrm{c} G$ and $H \le_\mathrm{c} G$.  Let
  $\mathcal{S} \colon G = G_0 \supseteq G_1\supseteq \ldots$ be a
  filtration series of~$G$, and consider the induced filtration series
  of~$N$ and~$G/N$ defined by
  \[
  \mathcal{S} \vert_N \colon G_i \cap N, \; i \in \mathbb{N}_0, \qquad
  \text{and} \qquad \mathcal{S} \vert_{G/N} \colon G_iN/N, \; i \in
  \mathbb{N}_0.
  \]
  Suppose that $N$ has strong Hausdorff dimension
  $\xi = \hdim_G^\mathcal{S}(N)$ in~$G$, with respect
  to~$\mathcal{S}$.  Then we have
  \begin{align}
    \hdim_G^\mathcal{S}(H) & \ge (1-\xi) \, \hdim_{G/N}^{\mathcal{S}
                             \vert_{G/N}}(HN/N) + \xi \,
                             \varliminf_{i \to \infty} \frac{\log_p
                             \lvert HG_i \cap N : G_i \cap N \rvert}{\log_p
                             \lvert N : G_i \cap N \rvert}
                             \tag{$\ast$} \label{equ:inequ-equ-1} \\
                           & \ge (1-\xi) \, \hdim_{G/N}^{\mathcal{S}
                             \vert_{G/N}}(HN/N) + \xi \,
                             \hdim_N^{\mathcal{S} \vert_N}(H \cap
                             N). \tag{$\ast\ast$} \label{equ:inequ-equ-2}
  \end{align}
  Moreover, equality holds in~\eqref{equ:inequ-equ-1}, if $HN/N$ has
  strong Hausdorff dimension in $G/N$ with respect to
  $\mathcal{S} \vert_{G/N}$ or if the lower limit 
    on the right-hand side
  is actually a limit.  Similarly, equality holds
  in~\eqref{equ:inequ-equ-2} if
   \begin{enumerate}
   \item[(i)] $H \cap N \le_\mathrm{o} N$ is an open subgroup or
   \item[(ii)] $G_i N = (G_i \cap H)N$, for all sufficiently large
     $i \in \mathbb{N}$.
   \end{enumerate}
\end{lemma}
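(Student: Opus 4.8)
The plan is to deduce all assertions from the short exact sequence $1 \to N \to G \to G/N \to 1$ together with the multiplicativity statement of Lemma~\ref{lem:interval}, by carefully tracking the numerator $\log_p \lvert HG_i : G_i\rvert$ through the sequence. The key identity is
\[
\lvert HG_i : G_i \rvert = \lvert HG_iN : G_iN \rvert \cdot \lvert HG_i \cap N : G_i \cap N \rvert,
\]
which follows from the second isomorphism theorem applied inside $G$ to the subgroup $HG_i$ and the normal subgroup $N$: indeed $HG_iN/G_iN \cong HG_i/(HG_i \cap G_iN)$ and, since $G_i \le HG_i \cap G_iN$, one checks $HG_i \cap G_iN = (HG_i \cap N)G_i$, giving the displayed factorisation. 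Dividing through by $\log_p \lvert G : G_i \rvert$ and using the elementary telescoping $\log_p\lvert G:G_i\rvert = \log_p\lvert G/N : G_iN/N\rvert + \log_p\lvert N : G_i\cap N\rvert$, the quotient becomes a weighted average of the two ratios $\log_p\lvert HG_iN/N : G_iN/N\rvert / \log_p\lvert G/N:G_iN/N\rvert$ and $\log_p\lvert HG_i\cap N : G_i\cap N\rvert / \log_p\lvert N:G_i\cap N\rvert$, with weights $\log_p\lvert G/N:G_iN/N\rvert / \log_p\lvert G:G_i\rvert$ and its complement. Because $N$ has strong Hausdorff dimension $\xi$, the latter weight tends to $\xi$ and the former to $1-\xi$.

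First I would take $\varliminf$ of this weighted average. Using the standard fact that the lower limit of a sum is at least the sum of the lower limits — valid here because the weights converge, so one of the summands may be treated by a product rule for $\varliminf$ with a convergent factor — one obtains the inequality~\eqref{equ:inequ-equ-1}, noting that $HG_iN/N = (HN/N)(G_iN/N)$ so the first ratio is exactly the defining quotient for $\hdim_{G/N}^{\mathcal{S}\vert_{G/N}}(HN/N)$. Next, to pass from~\eqref{equ:inequ-equ-1} to~\eqref{equ:inequ-equ-2} I would observe that $HG_i \cap N \supseteq (H\cap N)(G_i \cap N)$, hence $\lvert HG_i\cap N : G_i\cap N\rvert \ge \lvert (H\cap N)(G_i\cap N) : G_i\cap N\rvert = \lvert (H\cap N)G_i'\cap N_i : N_i\rvert$ in the notation $N_i = G_i\cap N$; taking $\varliminf$ bounds the bracketed term in~\eqref{equ:inequ-equ-1} below by $\hdim_N^{\mathcal{S}\vert_N}(H\cap N)$, which is~\eqref{equ:inequ-equ-2}.

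For the equality clauses: equality in~\eqref{equ:inequ-equ-1} when $HN/N$ has strong Hausdorff dimension follows since then the first ratio converges, so the weighted average splits cleanly and $\varliminf$ of the sum equals the sum; the case where the lower limit on the right of~\eqref{equ:inequ-equ-1} is actually a limit is symmetric. For equality in~\eqref{equ:inequ-equ-2} under (i), if $H\cap N$ is open in $N$ then $\hdim_N^{\mathcal{S}\vert_N}(H\cap N)=1$ is a proper limit and moreover $HG_i\cap N = N$ for all large $i$ since $G_i \cap N \le H\cap N$ eventually; thus the bracketed term in~\eqref{equ:inequ-equ-1} is also eventually $1$, and the two right-hand sides agree. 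Under (ii), the hypothesis $G_iN = (G_i\cap H)N$ forces $HG_i\cap N = (H\cap N)(G_i\cap N)$: from $G_i \le (G_i\cap H)N$ one gets $HG_i = H(G_i\cap H)N \cap HG_i$, and intersecting with $N$ collapses the quotient to exactly $(H\cap N)(G_i\cap N)$, so the inequality $HG_i\cap N \supseteq (H\cap N)(G_i\cap N)$ used above is in fact an equality at every large index, whence the $\varliminf$'s coincide.

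The main obstacle I anticipate is the bookkeeping in the case~(ii) argument: one must verify the set-theoretic identity $HG_i\cap N = (H\cap N)(G_i\cap N)$ purely from $G_iN = (G_i\cap H)N$, and while this is a routine Dedekind-law manipulation it is easy to slip on which factor absorbs which. Everything else reduces to the weighted-average computation and the elementary behaviour of $\varliminf$ under adding a convergent sequence; the strong-dimension hypothesis on $N$ is exactly what is needed to make the weights converge, which is the crux of why it appears in the statement.
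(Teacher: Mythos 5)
Your argument is correct and follows essentially the same route as the paper's proof: the same index factorisation $\lvert HG_i:G_i\rvert = \lvert HG_iN:G_iN\rvert\cdot\lvert HG_i\cap N:G_i\cap N\rvert$, the same reading of the quotient as a weighted average whose weights converge to $1-\xi$ and $\xi$ (this is where strong Hausdorff dimension of $N$ enters), and the same bounded-difference considerations for the equality clauses. One small correction: in case (i) your intermediate claim that $HG_i\cap N=N$ for large $i$ is not valid in general (only $HG_i\cap N\supseteq (H\cap N)(G_i\cap N)=H\cap N$ holds eventually), but this is harmless, since the ratio $\log_p\lvert HG_i\cap N:G_i\cap N\rvert/\log_p\lvert N:G_i\cap N\rvert$ is still squeezed between $\log_p\lvert H\cap N:G_i\cap N\rvert/\log_p\lvert N:G_i\cap N\rvert\to 1$ and $1$ --- precisely the observation, also present in your write-up, that the two numerators differ by a constant independent of~$i$.
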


\begin{proof}
  We observe that
  \begin{align*}
    \hdim^{\mathcal{S}}_G(H) %
    & = \varliminf_{i \to \infty}  \Bigg( \; \underbrace{\frac{\log_p \lvert G : N G_i
      \rvert}{\log_p \lvert G : G_i \rvert}}_{\to\, 1- \xi \text{ as } i
      \to \infty} \frac{\log_p \lvert HG_iN : G_i N
      \rvert}{\log_p \lvert G : G_i N \rvert} \\
    & \qquad \qquad + \underbrace{\frac{\log_p \lvert NG_i : G_i
      \rvert}{\log_p \lvert G : G_i \rvert}}_{\to\, \xi \text{ as } i
      \to \infty} 
      \frac{\log_p \lvert HG_i \cap NG_i : G_i
      \rvert}{\log_p \lvert N G_i : G_i \rvert} \; \Bigg)
  \end{align*}
  and that, for each $i \in \mathbb{N}_0$,
  \[
  \frac{\log_p \lvert HG_i \cap NG_i : G_i \rvert}{\log_p \lvert NG_i
    : G_i \rvert} = \frac{\log_p \lvert HG_i \cap N : G_i \cap N
    \rvert}{\log_p \lvert N : G_i \cap N \rvert}.
  \]
 Finally,
 \[
 \log_p \lvert HG_i \cap N : G_i \cap N \rvert \ge \log_p \lvert (H
 \cap N)(G_i \cap N) : G_i \cap N \rvert
 \]
 and, if condition (i) or (ii) holds, the difference between the two
 terms is bounded by a constant that is independent of
 $i \in \mathbb{N}_0$.
\end{proof}

\begin{lemma} \label{lem:hdim-elem-ab} Let $Z \cong C_p^{\, \aleph_0}$
  be a countably based infinite elementary abelian pro-$p$ group,
  equipped with a filtration series~$\mathcal{S}$.  Then, for every
  $\eta \in [0,1]$, there exists a closed subgroup
  $K \le_\mathrm{c} Z$ with strong Hausdorff dimension $\eta$ in $Z$
  with respect to~$\mathcal{S}$.
\end{lemma}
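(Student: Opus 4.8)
The plan is to exploit the $\mathbb{F}_p$-linear structure of $Z$ and build the subgroup $K$ one "coordinate" at a time along the filtration, so that the partial index quotients converge to the prescribed value $\eta$. Write $Z_i$ for the terms of $\mathcal{S}$ and set $d_i = \log_p \lvert Z : Z_i \rvert = \dim_{\mathbb{F}_p}(Z/Z_i)$, a non-decreasing sequence of non-negative integers with $d_i \to \infty$. Since $Z$ is elementary abelian, $Z$ is a topological $\mathbb{F}_p$-vector space and each $Z_i$ is a closed subspace; a closed subgroup $K$ is the same thing as a closed subspace, and $\log_p \lvert KZ_i : Z_i \rvert = \dim_{\mathbb{F}_p}\bigl((K + Z_i)/Z_i\bigr)$ is exactly the dimension of the image of $K$ in the finite-dimensional quotient $Z/Z_i$. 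So the task reduces to: given the flag $0 = V_0 \subseteq V_1 \subseteq \cdots$ of finite-dimensional quotients $V_i = Z/Z_i$ (with $\dim V_i = d_i$), find a closed subspace $K$ whose images $K_i \le V_i$ satisfy $\dim K_i / d_i \to \eta$, with the limit proper.

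First I would fix a convenient topological basis of $Z$ adapted to $\mathcal{S}$: choose $e_1, e_2, \ldots$ in $Z$ so that, for each $i$, the images of $e_1, \ldots, e_{d_i}$ form a basis of $V_i = Z/Z_i$ (possible by successively extending bases through the flag), and so that $\{e_j\}_{j \ge 1}$ topologically generates $Z$, i.e.\ $Z = \overline{\langle e_1, e_2, \ldots \rangle} \cong \prod_j \langle e_j \rangle$ in the evident sense, with $Z_i$ corresponding to the "tail" spanned topologically by $e_{d_i+1}, e_{d_i+2}, \ldots$. Then, for a subset $S \subseteq \mathbb{N}$, the closed subgroup $K = K_S = \overline{\langle e_j \mid j \in S\rangle}$ has the clean property that $\dim K_i = \lvert S \cap \{1, \ldots, d_i\}\rvert$ for every $i$; hence $\hdim_Z^{\mathcal{S}}(K_S)$ is the lower density of $S$ measured along the (possibly sparse, possibly repetitive) sequence $(d_i)$, and strong dimension means this is a genuine limit. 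It now suffices to pick $S$ so that $\lvert S \cap \{1, \ldots, d_i\}\rvert / d_i \to \eta$.

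The remaining step is a routine density construction. If $\eta = 0$ take $S = \emptyset$; if $\eta = 1$ take $S = \mathbb{N}$. For $\eta \in (0,1)$, build $S$ greedily: process the integers $1, 2, 3, \ldots$ in order and put $n$ into $S$ precisely when doing so keeps the running ratio $\lvert S \cap \{1,\ldots,n\}\rvert / n$ at most $\eta$ (equivalently, the classical Beatty-type set $S = \{ \lfloor m/\eta \rfloor : m \in \mathbb{N}\}$ works just as well). Either way $\lvert S \cap \{1,\ldots,n\}\rvert / n \to \eta$ as $n \to \infty$ through \emph{all} positive integers, and since $d_i \to \infty$ the subsequence $(d_i)$ inherits the same limit; thus $K_S$ has strong Hausdorff dimension $\eta$ in $Z$, as required. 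The only mild subtlety — and the one place to be careful — is the interaction between the abstract filtration $\mathcal{S}$ and the chosen basis: one must ensure the basis really is adapted to the flag $(Z_i)$ and that $K_S$ is genuinely \emph{closed}, i.e.\ equals the closure of the span of $\{e_j : j \in S\}$ rather than the algebraic span; this is immediate from the product decomposition $Z \cong \prod_j \langle e_j\rangle$, under which $K_S$ corresponds to the (closed) sub-product over the coordinates in $S$.
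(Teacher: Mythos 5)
Your proposal is correct and follows essentially the same route as the paper: both choose a basis of $Z$ adapted to the flag $(Z_i)$ and take the closed subgroup generated by a subset of basis vectors whose counting function along the filtration has density $\eta$; your global density set $S$ (Beatty/greedy) plays exactly the role of the paper's recursively defined exponents $e_i = \lceil \eta(d_1+\cdots+d_i)\rceil - (e_1+\cdots+e_{i-1})$. The points you flag as needing care (adaptedness of the basis, closedness of $K_S$ via the product decomposition) are handled the same way in the paper's argument.
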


\begin{proof}
  Write
  $\mathcal{S} \colon Z = Z_0 \supseteq Z_1 \supseteq Z_2 \supseteq
  \ldots$
  and let $\eta \in [0,1]$.  For $i \in \mathbb{N}$, we have
  $Z_{i-1}/Z_i \cong C_p^{\,d_i}$ for non-negative integers~$d_i$.

  \smallskip

  \noindent \underline{Claim}: There exist non-negative integers
  $e_1,e_2, \ldots$ such that, for each $i \in \mathbb{N}$, we have
  $0\le e_i\le d_i$ and
  \[
  e_1+\ldots + e_i=\lceil \eta (d_1+\ldots +d_i)\rceil.
  \]

  \smallskip

  Indeed, with $e_1 = \lceil \eta d_1\rceil$ the statement holds true
  for $i=1$.  Now, let $i \ge 2$ and suppose that
  $e_1+\ldots +e_{i-1} = \lceil \eta(d_1+\ldots +d_{i-1})\rceil$.
  Then
  \[
  \lceil \eta(d_1+\ldots +d_{i-1})\rceil \le \lceil \eta(d_1+\ldots
  +d_{i})\rceil \le \lceil \eta(d_1+\ldots +d_{i-1})\rceil +d_i
  \]
  and thus we may set
  \[
  e_i=\lceil \eta(d_1+\ldots +d_{i})\rceil-(e_1+\ldots +e_{i-1}),
  \]
  to satisfy the statement for~$i$.  The claim is proved.

  \smallskip

  For all sufficiently large $i \in \mathbb{N}$ we have
  $d_1+\ldots +d_i > 0$ and
  \[
  \eta \le \frac{e_1+\ldots +e_i}{d_1+\ldots+d_i} \le \eta +
  \frac{1}{d_1+\ldots+d_i}.
  \]

  With these preparations, it suffices to display a subgroup
  $K \le_\mathrm{c} Z$ such that
  \[
  \log_p \vert K Z_ i :Z_i \rvert = e_1+\ldots+e_i.
  \]
  For this purpose, we write
  \[
  Z = \langle z_{1,1}, \ldots, z_{1,d_1}, \;\; z_{2,1}, \ldots,
  z_{2,d_2},\;\; \ldots\; , \;\; z_{i,1}, \ldots, z_{i,d_i}, \;\; \ldots
  \rangle
  \]
  such that $Z_{i-1} = \langle z_{i,1}, \ldots, z_{i,d_i} \rangle Z_i$
  for each $i \in \mathbb{N}$.  Then we set
  \[
  K = \langle z_{1,1}, \ldots, z_{1,e_1}, \;\; z_{2,1}, \ldots,
  z_{2,e_2}, \;\; \ldots\; , \; z_{i,1},\ldots, z_{i,e_i},\;\; \ldots
  \rangle. \qedhere
  \]
\end{proof}

\begin{corollary} \label{cor:abelian-section-interval} Let $G$ be a
  countably based pro-$p$ group, equipped with a filtration
  series~$\mathcal{S}$, and let
  $N \trianglelefteq_\mathrm{c} H \le_\mathrm{c} G$ such that
  $H/N \cong C_p^{\, \aleph_0}$.  Set $\xi = \hdim_G^\mathcal{S}(N)$
  and $\eta = \hdim_G^\mathcal{S}(H)$.  If $N$ or $H$ has strong
  Hausdorff dimension in~$G$ with respect to~$\mathcal{S}$, then
  $[\xi,\eta] \subseteq \hspec^\mathcal{S}(G)$.
\end{corollary}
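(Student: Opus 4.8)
The plan is to deduce Corollary~\ref{cor:abelian-section-interval} from Lemma~\ref{lem:hdim-elem-ab} together with the multiplicativity statement of Lemma~\ref{lem:interval}. First I would treat the two hypotheses symmetrically by working inside the section $H/N$: since $H/N \cong C_p^{\,\aleph_0}$ is a countably based infinite elementary abelian pro-$p$ group, it carries the induced filtration series $\mathcal{S}\vert_{H/N} \colon G_i N/N \cap H/N$, and Lemma~\ref{lem:hdim-elem-ab} supplies, for every prescribed value $\zeta \in [0,1]$, a closed subgroup $\overline{K} \le_\mathrm{c} H/N$ having strong Hausdorff dimension $\zeta$ in $H/N$ with respect to that series. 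Pulling $\overline{K}$ back to a closed subgroup $K$ with $N \le_\mathrm{c} K \le_\mathrm{c} H$, I obtain a nested family of closed subgroups of $G$ interpolating between $N$ and $H$.

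The key step is then to compute $\hdim_G^\mathcal{S}(K)$ for each such $K$ and show it sweeps out the whole interval $[\xi,\eta]$ as $\zeta$ ranges over $[0,1]$. The natural tool is Lemma~\ref{lem:interval}, but one must be careful about which ambient group to apply it in. If $N$ has strong Hausdorff dimension $\xi$ in $G$, I would apply Lemma~\ref{lem:hdims-from-ses} with the normal subgroup $N$ and the subgroup $K$: since $K \cap N = N$ is open in $N$ (indeed equal to $N$), condition~(i) of that lemma holds, so equality holds in~\eqref{equ:inequ-equ-2}, giving
\[
\hdim_G^\mathcal{S}(K) = (1-\xi)\,\hdim_{G/N}^{\mathcal{S}\vert_{G/N}}(KN/N) + \xi \cdot 1 = (1-\xi)\,\hdim_{G/N}^{\mathcal{S}\vert_{G/N}}(\overline{K}) + \xi.
\]
Now $\hdim_{G/N}^{\mathcal{S}\vert_{G/N}}(\overline{K})$ can be evaluated by Lemma~\ref{lem:interval} applied to $\overline{K} \le_\mathrm{c} H/N \le_\mathrm{c} G/N$: since $\overline{K}$ has strong Hausdorff dimension $\zeta$ in $H/N$ with respect to the induced series, the lemma yields $\hdim_{G/N}^{\mathcal{S}\vert_{G/N}}(\overline{K}) = \hdim_{G/N}^{\mathcal{S}\vert_{G/N}}(H/N)\cdot\zeta$. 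One checks $\hdim_{G/N}^{\mathcal{S}\vert_{G/N}}(H/N) = (\eta - \xi)/(1-\xi)$ from the same decomposition applied to $H$ itself (using that $N$ has strong dimension, so $HN/N = H/N$ satisfies equality in~\eqref{equ:inequ-equ-1}, and noting $\eta = \hdim_G^\mathcal{S}(H)$). Substituting back gives $\hdim_G^\mathcal{S}(K) = (\eta-\xi)\zeta + \xi$, which ranges over $[\xi,\eta]$ as $\zeta$ ranges over $[0,1]$; hence $[\xi,\eta]\subseteq\hspec^\mathcal{S}(G)$.

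In the remaining case, where it is $H$ rather than $N$ that has strong Hausdorff dimension $\eta$ in $G$, I would argue dually: apply Lemma~\ref{lem:interval} directly to the chain $K \le_\mathrm{c} H \le_\mathrm{c} G$. For this I need $K$ to have strong Hausdorff dimension in $H$ with respect to $\mathcal{S}\vert_H$; this follows because $K/N$ has strong dimension in $H/N$ (by construction via Lemma~\ref{lem:hdim-elem-ab}) and $N$ is forced to have strong dimension in $H$ as well — here the clean way is to note that $\mathcal{S}\vert_H$ and the series on $H/N$ are compatible, so $\hdim_H^{\mathcal{S}\vert_H}(K)$ equals $\hdim_{H/N}^{\ldots}(K/N) = \zeta$ by the strong-dimension hypothesis on $\overline{K}$ together with Lemma~\ref{lem:hdims-from-ses}(i) applied inside $H$. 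Then Lemma~\ref{lem:interval} gives $\hdim_G^\mathcal{S}(K) = \eta\cdot\zeta$ only if $\xi = 0$; in general one must instead interpolate between $N$ and $H$ directly, taking $\overline{K}$ to contain a fixed subgroup realising dimension $\xi/\eta$ relative to $H/N$ — so the cleanest uniform treatment is to choose $\overline{K}$ via Lemma~\ref{lem:hdim-elem-ab} of strong dimension $(\zeta-\xi)/(\eta-\xi)$ inside $H/N$ for $\zeta\in[\xi,\eta]$ and verify $\hdim_G^\mathcal{S}(K) = \zeta$. The main obstacle is precisely this bookkeeping: matching the normalisation of the induced series on $H/N$ with the ambient series, and checking that the strong-dimension property transfers through the short exact sequence so that Lemma~\ref{lem:interval} genuinely applies; the values themselves are then an affine reparametrisation and present no difficulty.
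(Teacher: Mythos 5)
Your first case (where $N$ has strong Hausdorff dimension) is correct and is precisely the paper's intended route: Lemma~\ref{lem:hdim-elem-ab} produces $\overline{K} \le_\mathrm{c} H/N$ of prescribed strong dimension $\zeta$ with respect to the induced series, Lemma~\ref{lem:hdims-from-ses} applies with full equality to the preimage $K$ (note that $N \le K$ forces $KG_i \cap N = N$, so the lower limit in \eqref{equ:inequ-equ-1} is the constant $1$ and condition (i) holds), and Lemma~\ref{lem:interval} inside $G/N$ converts the answer into $\xi + (\eta-\xi)\zeta$. Only the degenerate case $\xi = 1$ needs a separate (trivial) remark.

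The second case, where only $H$ is assumed to have strong Hausdorff dimension, contains a genuine gap which you have misdiagnosed as ``bookkeeping''. Every tool you invoke there requires strong Hausdorff dimension of the \emph{smaller} member of a pair: Lemma~\ref{lem:interval} applied to $K \le_\mathrm{c} H \le_\mathrm{c} G$ needs $K$ to have strong dimension in $H$, and Lemma~\ref{lem:hdims-from-ses} applied inside $H$ with normal subgroup $N$ needs $N$ to have strong dimension in $H$ with respect to $\mathcal{S}\vert_H$. Your assertion that ``$N$ is forced to have strong dimension in $H$ as well'' is unsupported and false in general: convergence of $\log_p\lvert HG_i : G_i\rvert/\log_p\lvert G : G_i\rvert$ says nothing about convergence of $\log_p\lvert N(H\cap G_i) : H\cap G_i\rvert/\log_p\lvert H : H\cap G_i\rvert$. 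Likewise the claimed identity $\hdim_H^{\mathcal{S}\vert_H}(K) = \hdim_{H/N}(K/N)$ is wrong unless $\hdim_H^{\mathcal{S}\vert_H}(N)=0$; the correct relation is affine in $\hdim_H^{\mathcal{S}\vert_H}(N)$, which is exactly the quantity you cannot control. The root obstruction is that $\log_p\lvert KG_i : G_i\rvert = \log_p\lvert KG_i : NG_i\rvert + \log_p\lvert NG_i : G_i\rvert$, and a lower limit of a sum does not split unless one summand converges --- that is precisely the missing strong-dimension hypothesis on~$N$, and no reparametrisation of $\zeta$ repairs it. The paper avoids the issue entirely by invoking \cite[Thm.~5.4]{KlThZu19} in this case, a different mechanism that does not pass through the quotient $H/N$. (A self-contained alternative does exist: since $H$ has strong dimension $\eta$, one has $\hdim_G^{\mathcal{S}}(K) = \eta\,\hdim_H^{\mathcal{S}\vert_H}(K)$ for \emph{every} $K \le_\mathrm{c} H$, and inside $H$ the densities of $N$ and of $H/N$ sum exactly to $1$ at each level, so writing the relevant quotient as $1-(1-a_i)c_i$ and using $\varliminf(1-u_i)=1-\varlimsup u_i$ yields $\hdim_H^{\mathcal{S}\vert_H}(K) = \xi' + \zeta(1-\xi')$ with $\xi' = \xi/\eta$ --- but that is a genuinely different computation from the one you wrote.)
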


\begin{proof}
  If $N$ has strong Hausdorff dimension, we apply
  Lemmata~\ref{lem:interval}, \ref{lem:hdims-from-ses}
  and~\ref{lem:hdim-elem-ab}.  If $H$ has strong Hausdorff dimension
  the claim follows from~\cite[Thm.~5.4]{KlThZu19}.
\end{proof} 


\subsection{} For convenience we recall two standard commutator
collection formulae.

\begin{proposition}\label{pro:standard-commutator-id}
  Let $G = \langle a,b \rangle$ be a finite $p$-group, and let
  $r \in \N$.  For $u,v \in G$ let $K(u,v)$ denote the normal closure
  in $G$ of \textup{(i)} all commutators in $\{u,v\}$ of weight at
  least $p^r$ that have weight at least $2$ in $v$, together with
  \textup{(ii)} the $p^{r-s+1}$th powers of all commutators in
  $\{u,v\}$ of weight less than $p^s$ and of weight at least $2$
  in~$v$ for $1 \le s \leq r$.  Then
  \begin{align}
    (ab)^{p^r} & \equiv_{K(a,b)} a^{p^r} \, b^{p^r} \, [b, a]^{\binom{p^r}{2}} \,
             [b, a, a]^{\binom{p^r}{3}} \, \cdots \,
             [b, a, \overset{p^r-2} \ldots, a]^{\binom{p^r}{p^r-1}} \,
             [b, a, \overset{p^r-1} \ldots,
             a], \label{equ:commutator-formula-1} \\ 
    [a^{p^r}, b] & \equiv_{K(a,[a,b])} [a, b]^{p^r} \, [a,b,a]^{\binom{p^r}{2}} \,
               \cdots \, [a, b, a, \overset{p^r-2} \ldots, a]^{\binom{p^r}{p^r-1}}
               \, [a, b, a, \overset{p^r-1} \ldots, a]. \label{equ:commutator-formula-2} 
  \end{align}

  \noindent \textbf{Remark.} Under the standing assumption $p \ge 3$ and the
  extra assumptions
  \[
  \gamma_2(G)^p = 1 \qquad \text{and} \qquad [\gamma_2(G),\gamma_2(G)]
  \subseteq Z(G),
  \]
  the congruences \eqref{equ:commutator-formula-1} and
  \eqref{equ:commutator-formula-2} simplify to
  \begin{equation} \label{equ:commutator-formula-3}
  (ab)^{p^r} \equiv_{L(a,b)} a^{p^r} \, b^{p^r} \, [b, a, \overset{p^r-1}
  \ldots, a] \quad \text{and} \quad [a^{p^r}, b] \equiv_{M(a,b)} \, [a, b, a,
  \overset{p^r-1} \ldots, a],
  \end{equation}
  where $L(a,b)$ denotes the normal closure in $G$ of all commutators
  in $\{a,b\}$ of weight at least $p^r$ that have weight at least $2$
  in~$b$ and $M(a,b)$ denotes the normal closure in $G$ of all
  commutators
  $[[b,a,\overset{i}{\ldots},a],[b,a,\overset{j}{\ldots},a]]$ with
  $i+j \ge p^r$.
\end{proposition}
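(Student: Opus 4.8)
The plan is to treat \eqref{equ:commutator-formula-1} as an instance of P.\ Hall's collection process and to deduce the rest from it. For \eqref{equ:commutator-formula-1} itself I would either quote the required precise form from the standard literature on the collection process or re-derive it by collecting the letter $a$ to the front of the word $(ab)^{p^r}$. In the fully collected expression the two weight-one basic commutators $a$ and $b$ acquire the exponent $p^r$; the basic commutators that are linear in $b$, namely $[b,a],[b,a,a],\ldots$, acquire binomial exponents, so that $[b,a,\overset{k}{\ldots},a]$ carries $\binom{p^r}{k+1}$; and every remaining basic commutator has weight at least $2$ in $b$. Among the latter, one of weight at least $p^r$ lies in part~(i) of $K(a,b)$, while one of weight $w<p^r$ occurs with an exponent whose $p$-adic valuation, by the classical bound on the collecting polynomials, is large enough to put its contribution into part~(ii) of $K(a,b)$. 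This yields \eqref{equ:commutator-formula-1}.

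For \eqref{equ:commutator-formula-2} I would use
\[
[a^{p^r},b]=(a^{p^r})^{-1}(a^{p^r})^{b}=(a^{p^r})^{-1}(a^{b})^{p^r}=(a^{p^r})^{-1}\bigl(a\,[a,b]\bigr)^{p^r}
\]
and apply \eqref{equ:commutator-formula-1} to the pair $(a,[a,b])$ in place of $(a,b)$ — the collection formula holds verbatim for any two elements of $G$ — together with $[[a,b],a,\overset{k}{\ldots},a]=[a,b,a,\overset{k}{\ldots},a]$. Left multiplication by $(a^{p^r})^{-1}$ cancels the leading factor $a^{p^r}$ and, since $K(a,[a,b])\trianglelefteq G$, preserves congruences modulo $K(a,[a,b])$; this gives \eqref{equ:commutator-formula-2}.

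For the Remark, assume in addition $p\ge 3$, $\gamma_2(G)^p=1$ and $[\gamma_2(G),\gamma_2(G)]\le Z(G)$. Every commutator in $\{u,v\}$ of weight at least $2$ lies in $\gamma_2(G)$ and so has order dividing $p$; since the exponents $p^{r-s+1}$ appearing in part~(ii) of $K(u,v)$ satisfy $r-s+1\ge 1$, part~(ii) is trivial for both pairs at issue. Hence $K(a,b)=L(a,b)$, while $K(a,[a,b])$ is the normal closure of the commutators in $\{a,[a,b]\}$ of weight at least $p^r$ and of weight at least $2$ in $[a,b]$. Any such commutator lies in $[\gamma_2(G),\gamma_2(G)]\le Z(G)$; those of weight at least $3$ in $[a,b]$ are trivial, since such a commutator has a proper sub-commutator in $[\gamma_2(G),\gamma_2(G)]\le Z(G)$ and a bracket with a central entry is trivial, and those of weight exactly $2$ in $[a,b]$ reduce — modulo the central subgroup $[\gamma_2(G),\gamma_2(G)]$, after a short weight bookkeeping — to the commutators $[[b,a,\overset{i}{\ldots},a],[b,a,\overset{j}{\ldots},a]]$ with $i+j\ge p^r$. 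Thus $K(a,[a,b])=M(a,b)$. Finally, since $p^r$ has a single nonzero digit in base $p$ we have $\binom{p^r}{j}\equiv 0\pmod p$ for $0<j<p^r$; combined with $\gamma_2(G)^p=1$ this makes the factor $[a,b]^{p^r}$ and the factors $[b,a,\overset{k}{\ldots},a]^{\binom{p^r}{k+1}}$, $[a,b,a,\overset{k}{\ldots},a]^{\binom{p^r}{k+1}}$ with $1\le k\le p^r-2$ trivial, and deleting them from the right-hand sides of \eqref{equ:commutator-formula-1} and \eqref{equ:commutator-formula-2} leaves exactly $a^{p^r}b^{p^r}[b,a,\overset{p^r-1}{\ldots},a]$ and $[a,b,a,\overset{p^r-1}{\ldots},a]$, which is \eqref{equ:commutator-formula-3}.

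The main obstacle is \eqref{equ:commutator-formula-1}: establishing that, modulo $K(a,b)$, the collected form of $(ab)^{p^r}$ is \emph{precisely} $a^{p^r}b^{p^r}$ followed by the descending chain of $b$-linear commutators with the indicated binomial exponents and nothing more requires a careful run through the collection process, with the customary bookkeeping of basic commutators and the $p$-adic valuations of the collecting polynomials — or a direct appeal to the literature, where this weak form of the collection formula is recorded. Once \eqref{equ:commutator-formula-1} is in hand the remaining steps are routine; the only other place needing a little care is the identification $K(a,[a,b])=M(a,b)$, where the hypothesis $[\gamma_2(G),\gamma_2(G)]\le Z(G)$ is precisely what permits the relevant central commutators to be reorganised at will.
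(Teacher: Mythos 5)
Your proposal is correct and follows essentially the same route as the paper: the paper likewise defers the collection formula \eqref{equ:commutator-formula-1} to the literature (citing \cite[Prop.~1.1.32]{LeMc02}), obtains \eqref{equ:commutator-formula-2} from the identity $[a^{p^r},b]=a^{-p^r}(a[a,b])^{p^r}$, and treats \eqref{equ:commutator-formula-3} as a consequence of $\gamma_2(G)^p=1$, $[\gamma_2(G),\gamma_2(G)]\subseteq Z(G)$ and the divisibility $p\mid\binom{p^r}{j}$ for $0<j<p^r$ via standard commutator manipulations. Your identifications $K(a,b)=L(a,b)$ and $K(a,[a,b])=M(a,b)$ simply spell out those manipulations in more detail than the paper does.
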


The general result is recorded (in a slighter stronger form) in
\cite[Prop.~1.1.32]{LeMc02}; we remark that
\eqref{equ:commutator-formula-2} follows directly from
\eqref{equ:commutator-formula-1}, due to the identity
$[a^{p^r},b] = a^{-p^r} (a [a,b])^{p^r}$.  The first congruence
in~\eqref{equ:commutator-formula-3} follows directly
from~\eqref{equ:commutator-formula-1}; the second congruence
in~\eqref{equ:commutator-formula-3} is derived
from~\eqref{equ:commutator-formula-2} by standard
commutator manipulations.


\subsection{} Now we describe, for $k\in \mathbb{N}$, the lower
central series, the lower $p$-series and the Frattini series of the
finite wreath product
\[
W_k = \langle x,y \rangle = \langle x \rangle \ltimes \langle y, y^x,
\ldots, y^{x^{p^k-1}} \rangle \cong C_p \mathrel{\wr} C_{p^k}
\]
with top group $\langle x \rangle \cong C_{p^k}$ and base group
$\langle y, y^x, \ldots, y^{x^{p^k-1}} \rangle \cong C_p^{\, p^k}$.
 
\begin{proposition} \label{pro:series-Wk} For $k \in \mathbb{N}$, the
  finite wreath product $W_k$ defined above is nilpotent of
  class~$p^k$ and
  $W_k^{\, p^k} = \langle y y^x y^{x^2} \cdots y^{x^{p^k-1}} \rangle
  \cong C_p$.
  \begin{enumerate}
  \item[(1)] The lower central series of $W_k$ satisfies
    \begin{align*}
      & W_k = \gamma_1(W_k)  = \langle x,y \rangle \; \gamma_2(W_k)%
      & \text{with} \quad %
      & W_k/\gamma_2(W_k) \cong C_{p^k} \times C_p, \\
      & \gamma_i(W_k) = \langle [y,x,\overset{i-1}{\ldots},x] \rangle
        \; \gamma_{i+1}(W_k) %
      & \text{with} \quad %
      & \gamma_i(W_k)/\gamma_{i+1}(W_k) \cong C_p \text{ for $2 \le i
        \le p^k$.}
    \end{align*} 
  \item[(2)] The lower $p$-series of $W_k$ has length $p^k$; it
    satisfies, for $1 \le i \le k$,
    \begin{align*}
      P_i(W_k) & = \langle x^{p^{i-1}}, [y,x,\overset{i-1}{\ldots},x]
                 \rangle \; P_{i+1}(W_k)%
      & \text{with} \quad%
      &  P_i(W_k)/P_{i+1}(W_k) \cong C_p \times C_p,\\
      \intertext{and, for $k < i \le p^k$,}
      P_i(W_k) & = \langle [y,x,\overset{i-1}{\ldots},x] \rangle \;
                 P_{i+1}(W_k)%
      &  \text{with} \quad%
      &  P_i(W_k)/P_{i+1}(W_k) \cong C_p.
    \end{align*}
  \item[(3)] The Frattini series of $W_k$ has length $k+1$; it
    satisfies, for $0\le i< k$,
    \begin{align*}
      \Phi_i(W_k) & =\langle x^{p^i} \rangle
                    \gamma_{\frac{p^i-1}{p-1}+1}(W_k) \quad 
                    \text{with}\quad \Phi_i(W_k)/\Phi_{i+1}(W_k)\cong
                    C_p \times \overset{p^i+1}{\ldots} \times C_p,\\ 
      \Phi_k(W_k) & = \gamma_{\frac{p^k-1}{p-1}+1}(W_k) \qquad\quad
                    \text{with}\quad\Phi_k(W_k)/\Phi_{k+1}(W_k)\cong
                    C_p \times
                    \overset{\frac{p^{k+1}-2p^k+1}{p-1}}{\ldots}  
                    \times C_p. 
    \end{align*} 
    \item[(4)] The dimension subgroup series of $W_k$ has
      length~$p^k$; in particular, it satisfies,
      for $p^{k-1}+1 \le i \le p^k$,
    \begin{align*}
      & D_i(W_k) = \gamma_i(W_k)
        = \langle [y,x,\overset{i-1}{\ldots},x] \rangle
        \; D_{i+1}(W_k) %
      & \text{with} \quad %
      & D_i(W_k)/D_{i+1}(W_k) \cong C_p.
    \end{align*} 
  \end{enumerate}
\end{proposition}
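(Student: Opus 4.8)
The plan is to translate the whole computation into linear algebra over the group algebra $R=\mathbb{F}_p[\langle x\rangle]$ of the top group together with a book‑keeping of $s$‑adic valuations, and then to handle the four series in turn. For the set‑up, let $t$ be the image of $x$ in $R$ and put $s=t-1$, so $R\cong\mathbb{F}_p[s]/(s^{p^k})$ with $s^{p^k}=0\neq s^{p^k-1}$; the base group $B=\langle y,y^x,\ldots,y^{x^{p^k-1}}\rangle$ is then a free $R$‑module of rank one, generated by $y$, on which $x$ acts as multiplication by $t$, and the nontrivial $\langle x\rangle$‑invariant subgroups of $B$ are exactly the $s^vB$ with $\dim_{\mathbb{F}_p}s^vB=p^k-v$. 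Two elementary identities drive everything. First, for $b\in B$ and $m\in\mathbb{Z}$ with $p^k\nmid m$, writing $m=p^em'$ with $p\nmid m'$, one has $[b,x^m]=\bigl((1+s)^m-1\bigr)b$ and $(1+s)^m-1=(1+s^{p^e})^{m'}-1$ is a unit multiple of $s^{p^e}$. Second, since $B$ is abelian and normal in $W_k$, for $g=x^mb$ with $b\in B$ one has $g^{p^l}=x^{mp^l}\,\nu_{l,m}(b)$, where $\nu_{l,m}=\sum_{j=0}^{p^l-1}t^{jm}$; for $p^k\nmid m$ and $m=p^em'$ as above $\nu_{l,m}=(t^{mp^l}-1)/(t^m-1)$ is a unit multiple of $s^{p^{e+l}-p^e}$, while $\nu_{l,m}\equiv 0$ when $p^k\mid m$. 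Thus commuting with a power of $x$ and extracting $p^l$‑th powers each raise $s$‑valuations by amounts one can read off.

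\emph{Lower central series.} From $W_k=\langle x\rangle B$ with $B$ abelian and normal one gets $\gamma_2(W_k)=[W_k,W_k]=[B,\langle x\rangle]=sB$ and, inductively for $i\ge 2$, $\gamma_{i+1}(W_k)=[\gamma_i(W_k),W_k]=[\gamma_i(W_k),\langle x\rangle]=s^iB$. Hence $W_k$ has nilpotency class exactly $p^k$, each $\gamma_i(W_k)/\gamma_{i+1}(W_k)\cong C_p$ for $2\le i\le p^k$ with generator the image of $s^{i-1}y=[y,x,\overset{i-1}{\ldots},x]$, and $W_k/\gamma_2(W_k)=\langle\bar x\rangle\times\langle\bar y\rangle\cong C_{p^k}\times C_p$. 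Finally $(xy)^{p^k}=s^{p^k-1}y=yy^x\cdots y^{x^{p^k-1}}$ by the $p$‑th power identity (alternatively by the simplified collection formula \eqref{equ:commutator-formula-3}, valid here because $\gamma_2(W_k)$ is elementary abelian and central in $\gamma_2(W_k)$), so $W_k^{p^k}=\gamma_{p^k}(W_k)=\langle yy^x\cdots y^{x^{p^k-1}}\rangle\cong C_p$; more generally the same computation gives $W_k^{p^l}=\langle x^{p^l}\rangle\,\gamma_{p^l}(W_k)$ for all $l\ge 0$.

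\emph{The remaining three series.} Each of $\mathcal{L},\mathcal{F},\mathcal{D}$ is obtained by one and the same inductive scheme: show that the $i$‑th term has the shape $H=\langle x^{p^{a(i)}}\rangle\,\gamma_{b(i)+1}(W_k)=\langle x^{p^{a(i)}}\rangle\,s^{b(i)}B$, then read off the quotient from $|H:H'|=p^{a(i)-a(i+1)}\cdot p^{b(i+1)-b(i)}$ (using $\langle x\rangle\cap B=1$) together with the fact that the relevant quotient of $H$ is elementary abelian (for $\mathcal{F}$, because the next term is $\Phi(H)$) or visibly $C_p\times C_p$ resp.\ $C_p$ (for $\mathcal{L}$). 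For the inductive step one computes that commuting $H=\langle x^{p^a}\rangle\,s^cB$ with $W_k$ gives $s^{\min(p^a,\,c+1)}B$, commuting it with itself gives $s^{c+p^a}B$, and $H^p$ is generated by $\langle x^{p^{a+1}}\rangle$ together with norm elements $\nu_{1,\cdot}(s^cB)$ of $s$‑valuation at least $c+(p-1)p^a$; since $(p-1)p^a\ge p^a\ge 1$ the $p$‑th power contribution is absorbed into the commutator contribution, and the recursion collapses to a term of the predicted form. This yields $P_i(W_k)=\langle x^{p^{i-1}}\rangle\,\gamma_i(W_k)$ for $1\le i\le k$ and $P_i(W_k)=\gamma_i(W_k)$ for $k\le i\le p^k$ (agreeing at $i=k$, where $x^{p^{k-1}}$ has order $p$), of length $p^k$ with the stated quotients; writing $c_i=1+p+\cdots+p^{i-1}=(p^i-1)/(p-1)$, so $c_{i+1}=c_i+p^i$, it yields $\Phi_i(W_k)=\langle x^{p^i}\rangle\,\gamma_{c_i+1}(W_k)$ for $0\le i\le k$, whence $\Phi_i/\Phi_{i+1}$ is elementary abelian of rank $1+p^i$ for $i<k$ and $\Phi_k(W_k)=\gamma_{c_k+1}(W_k)\cong C_p^{\,p^k-c_k}$ with $p^k-c_k=(p^{k+1}-2p^k+1)/(p-1)$, of length $k+1$; and for $\mathcal{D}$, invoking the Jennings--Lazard formula $D_n(W_k)=\prod_{ip^l\ge n}\gamma_i(W_k)^{p^l}$ (or rerunning the recursion), the terms with $i\ge 2$, $l\ge 1$ vanish since $\gamma_i(W_k)\le B$ is elementary abelian, leaving $D_n(W_k)=W_k^{p^{\lceil\log_p n\rceil}}\gamma_n(W_k)=\langle x^{p^{\lceil\log_p n\rceil}}\rangle\,\gamma_n(W_k)$, which for $p^{k-1}<n\le p^k$ equals $\gamma_n(W_k)$ because $x^{p^k}=1$, with $C_p$ quotients and length $p^k$.

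\emph{Main obstacle.} The one point requiring care — and the only place a crude estimate would fail — is the absorption claim used at every step: that each $p^l$‑th power $g^{p^l}$ of an element of the current term lands, modulo the cyclic part $\langle x^{p^a}\rangle$, strictly deeper in the lower central series than the commutator contribution does. This is exactly what the valuation formula $v_s(\nu_{l,m})=p^{v_p(m)+l}-p^{v_p(m)}$ from the set‑up delivers; once it is in hand the three inductions are mechanical, the remaining work being the numerical identities $c_{i+1}=c_i+p^i$ and manipulations of $\lceil i/p\rceil$. A minor secondary point is to check that the top generator $x^{p^{a(i)}}$ genuinely survives modulo the next term, so that the quotients really acquire the extra cyclic factor; this is immediate from $\langle x\rangle\cap B=1$ together with $\gamma_j(W_k)\le B$ for $j\ge 2$.
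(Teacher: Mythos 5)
Your proof is correct and follows essentially the same route as the paper: both realise $W_k$ as $\langle 1+t \rangle \ltimes \mathbb{F}_p[t]/t^{p^k}\mathbb{F}_p[t]$, identify $\gamma_i(W_k)$ for $i \ge 2$ with the powers of the augmentation ideal acting uniserially on the base group, compute $p^l$-th powers via the geometric-sum (norm) element, and invoke Jennings' theorem for the dimension subgroup series. The only blemish is your overlap claim for the lower $p$-series: the two formulas agree at $i=k+1$ (where $x^{p^k}=1$), not at $i=k$, where $P_k(W_k)=\langle x^{p^{k-1}} \rangle \gamma_k(W_k) \neq \gamma_k(W_k)$; this does not affect the stated result.
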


\begin{proof}
  The assertions are well known and easy to verify from the concrete
  realisation of $W_k$ as a semidirect product
  \begin{equation}\label{equ:Wk-explicit}
    W_k \cong \langle 1+t \rangle / \langle (1+t)^{p^k} \rangle \ltimes
    \mathbb{F}_p[t] / t^{p^k}
    \mathbb{F}_p[t]
  \end{equation}
  in terms of polynomials over the finite field~$\mathbb{F}_p$: here
  $y^{x^i}$ corresponds to $(1+t)^{i}$ modulo
  $t^{p^k} \mathbb{F}_p[t]$, and it is easy to describe all normal
  subgroups.  In particular the normal subgroups of $W_k$ contained in
  the base group form a descending chain, corresponding to the groups
  $t^{i-1} \mathbb{F}_p[t] / t^{p^k} \mathbb{F}_p[t]$,
  $1 \le i \le p^k+1$.

  For $0 \le m < k$ and
  $z \in \langle y, y^x, \ldots, y^{x^{p^k-1}} \rangle$ the element
  \[
  (x^{p^m} z)^{p^k} = (x^{p^m})^{p^k} z^{x^{(p^k-1)p^m}} \cdots
  z^{x^{p^m}} z = z^{x^{(p^k-1)p^m}} \cdots z^{x^{p^m}} z
  \]
  corresponds in $\mathbb{F}_p[t] / t^{p^k} \mathbb{F}_p[t]$ to a
  multiple of
  \[
  \sum_{i=0}^{p^k-1} (1+t)^{i p^m} = \sum_{i=0}^{p^k-1} (1+t^{p^m})^i
  = \frac{(1+t^{p^m})^{p^k}-1}{(1+t^{p^m}) -1} = t^{(p^k-1)p^m};
  \]
  this shows that
  $W_k^{\, p^k} = \langle y y^x y^{x^2} \cdots y^{x^{p^k-1}} \rangle
  \cong C_p$.

  Clearly, $\gamma_1(W_k) = W_k$.  For $2 \le i \le p^k+1$, the group
  $\gamma_i(W_k)$ corresponds to the subgroup
  $t^{i-1} \mathbb{F}_p[t] / t^{p^k} \mathbb{F}_p[t]$ of the base
  group.  In particular, $W_k$ has nilpotency class~$p^k$.  For
  $1 \le i \le k$, we have
  $P_i(W_k) = \langle x^{p^{i-1}} \rangle \gamma_i(W_k)$, while for
  $k < i \le p^k$ we get $P_i(W_k) = \gamma_i(W_k)$.  For
  $0 \le i \le k$ a simple induction shows that the group
  $\Phi_i(W_k)$ is the normal closure in $W_k$ of the two elements
  \[
  x^{p^i} \qquad \text{and} \qquad [y,x,x^p,x^{p^2},\ldots,
  x^{p^{i-1}}]
  =[y,x,\overset{\frac{p^i-1}{p-1}}{\ldots},x];
  \]
  the intersection of $\Phi_i(W_k)$ with the base group corresponds to
  $t^{\frac{p^i-1}{p-1}} \mathbb{F}_p[t] / t^{p^k} \mathbb{F}_p[t]$.
  Thus
  $\Phi_i(W_k) = \langle x^{p^i} \rangle \gamma_{\frac{p^i-1}{p-1}
    +1}(W_k)$.
  In particular, $\Phi_k(W_k)$ is elementary abelian and
  $\Phi_i(W_k) =1$ for $i>k$.  Finally, for $i \ge p^{k-1} + 1$, we
  use~\cite[Thm.~11.2]{DDMS99} to deduce that
  $D_i(W_k) = \gamma_i(W_k)$.
\end{proof}

The structural results for the finite wreath products $W_k$ transfer
naturally to the inverse limit~$W \cong \varprojlim_k W_k$, i.e., the
pro-$p$ wreath product
\begin{equation}\label{equ:pro-p-wreath-prod-W}
  W = \langle x,y \rangle = \langle x \rangle \ltimes B \cong C_p
  \mathrel{\hat{\wr}} \mathbb{Z}_p
\end{equation}
with top group $\langle x \rangle \cong \mathbb{Z}_p$ and base group
$B = \prod_{i \in \mathbb{Z}} \langle y^{x^i} \rangle \cong C_p^{\,
  \aleph_0}$.
Compatible with~\eqref{equ:Wk-explicit}, the group $W$ has a concrete
realisation as a semidirect product
\begin{equation} \label{equ:W-explicit}
  W \cong \langle 1+t \rangle \ltimes
  \mathbb{F}_p[\![t]\!] , \quad \text{where $\langle 1+t \rangle
    \le_\mathrm{c} \mathbb{F}_p[\![t]\!]^*$,}
\end{equation}
in terms of formal power series over the finite field~$\mathbb{F}_p$.
We record the following lemma on closed normal subgroups of $W$.

\begin{lemma} \label{lem:normal-subgroups-in-W} Let
  $W = \langle x,y \rangle \cong C_p \mathrel{\hat{\wr}} \mathbb{Z}_p$
  with base group~$B$ as above, and let
  $1 \ne K \trianglelefteq_\mathrm{c} W$ be a non-trivial closed
  normal subgroup.  Then either $K$ is open in $W$ or $K$ is open
  in~$B$; in particular, $K \cap B \le_\mathrm{o} B$ and
  $\lvert K \cap B : [K \cap B,W] \rvert = p$.
\end{lemma}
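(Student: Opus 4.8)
The plan is to exploit the concrete realisation $W \cong \langle 1+t \rangle \ltimes \mathbb{F}_p[\![t]\!]$ from \eqref{equ:W-explicit}, in which the base group $B$ corresponds to the additive group of the ring $\mathbb{F}_p[\![t]\!]$ and the top group acts by multiplication by powers of $1+t$. First I would describe all closed subgroups of $B$ that are normal in $W$. Since $B$ is abelian, conjugation by $x$ sends an element $f \in \mathbb{F}_p[\![t]\!]$ to $(1+t)f$, so a closed subgroup $M \le_\mathrm{c} B$ is normalised by $W$ precisely when it is invariant under multiplication by $1+t$, equivalently under multiplication by $t$, i.e.\ when $M$ is a closed ideal of $\mathbb{F}_p[\![t]\!]$. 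The closed ideals of the formal power series ring over a field are exactly $0$ and the powers $t^{j}\mathbb{F}_p[\![t]\!]$, $j \in \mathbb{N}_0$ (any nonzero closed ideal contains a nonzero power series of minimal $t$-order $j$, and then contains $t^{j}$ up to a unit, hence all of $t^{j}\mathbb{F}_p[\![t]\!]$, and conversely it is contained in $t^j \mathbb{F}_p[\![t]\!]$ by the minimality and closedness). Thus the nontrivial $M \trianglelefteq_\mathrm{c} W$ with $M \le B$ are exactly the $t^{j}\mathbb{F}_p[\![t]\!]$, each of which is open in $B$.

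Next I would handle a general nontrivial $K \trianglelefteq_\mathrm{c} W$. Consider $K \cap B$, a closed $W$-normal subgroup of $B$. If $K \cap B \ne 1$, then by the previous paragraph $K \cap B = t^{j}\mathbb{F}_p[\![t]\!]$ is open in $B$, so $K$ contains an open subgroup of $B$; depending on whether $K \le B$ or $K \not\le B$, the group $K$ is then open in $B$ or open in $W$ (since $KB/B$ is a nontrivial closed subgroup of $W/B \cong \mathbb{Z}_p$, hence of finite index, forcing $K$ open in $W$). It remains to rule out $K \cap B = 1$ for $K \ne 1$: in that case the projection $W \to W/B \cong \mathbb{Z}_p$ maps $K$ isomorphically onto a nontrivial closed subgroup, so $K = \overline{\langle x^{p^n} z \rangle}$ for some $n \in \mathbb{N}_0$ and some $z \in B$. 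But $[x^{p^n}z, y] = [x^{p^n},y]^{z}[z,y] = [x^{p^n},y] \in K \cap B$, and $[x^{p^n},y]$ corresponds to $((1+t)^{p^n}-1)\cdot 1 = t^{p^n}$ under \eqref{equ:W-explicit}, which is nonzero — contradicting $K\cap B = 1$. (Alternatively one can quote that $\mathbb{Z}_p$ is not a quotient of $W$ realising a complement, or compute directly in the finite quotients $W_k$.) Hence $K \cap B \ne 1$ whenever $K \ne 1$, and we have shown $K$ is open in $W$ or open in $B$, and in either case $K \cap B \le_\mathrm{o} B$.

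Finally, for the last assertion, $K \cap B = t^{j}\mathbb{F}_p[\![t]\!]$ for some $j$, and $[K\cap B, W]$ is generated topologically by elements $(1+t)f - f = tf$ with $f \in t^{j}\mathbb{F}_p[\![t]\!]$, i.e.\ $[K \cap B, W] = t\cdot(K\cap B) = t^{j+1}\mathbb{F}_p[\![t]\!]$, so $\lvert K \cap B : [K\cap B, W]\rvert = \lvert t^{j}\mathbb{F}_p[\![t]\!] : t^{j+1}\mathbb{F}_p[\![t]\!]\rvert = p$.

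The routine points are the determination of the closed ideals of $\mathbb{F}_p[\![t]\!]$ and the commutator bookkeeping under the identification \eqref{equ:W-explicit}. The only genuinely delicate step is excluding the case $K \cap B = 1$, i.e.\ showing that no nontrivial procyclic subgroup of $W$ projecting isomorphically onto $W/B$ can be normal; the commutator computation $[x^{p^n}z,y] = t^{p^n} \ne 0$ settles it cleanly, and I expect this to be the main thing one has to be careful about.
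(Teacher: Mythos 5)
Your proof is correct, and it follows the paper's overall strategy (classify the $W$-invariant closed subgroups of $B$ first, then treat $K \not\subseteq B$) while executing the second half differently. The first half is essentially identical in content: your identification of the nontrivial closed $W$-normal subgroups of $B$ with the closed ideals $t^{j}\mathbb{F}_p[\![t]\!]$ under \eqref{equ:W-explicit} is exactly the uniseriality that the paper phrases via the lower central series $\gamma_i(W) = B_{i-1}$, and your computation $[K\cap B, W] = t\cdot(K\cap B)$ is the paper's $\lvert B_i : B_{i+1}\rvert = p$. The divergence is in the case $K \not\subseteq B$: the paper picks $x^{p^k}z \in K$ and uses the self-similarity isomorphism $\langle x^{p^k}z\rangle \ltimes B \cong \langle x\rangle \ltimes (B \times \overset{p^k}{\ldots} \times B)$ to reduce to the case $x \in K$, whence $\langle x\rangle B_1 \le_\mathrm{c} K$ and $K \le_\mathrm{o} W$; you instead prove directly that $K \cap B \ne 1$ via the commutator $[x^{p^n}z,y] = [x^{p^n},y]$, which corresponds to $\pm t^{p^n} \ne 0$, and then get openness of $K$ in $W$ from $\lvert W : K\rvert = \lvert W : KB\rvert \cdot \lvert B : K\cap B\rvert < \infty$. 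Your route is more elementary (no re-coordinatisation of the wreath product is needed) and has the virtue of making explicit why $K \cap B \ne 1$, a point the paper passes over rather tersely when it writes ``it follows that $1 \ne K \cap B = B_i$''; the paper's route, in exchange, exhibits a concrete open subgroup $\langle x\rangle B_1$ inside $K$. Both arguments are sound, and all the routine steps you flag (closed ideals of $\mathbb{F}_p[\![t]\!]$, the commutator bookkeeping) check out.
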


\begin{proof}
  The lower central series of $W$ is well known and easy to compute:
  $\gamma_1(W) = W$ and $\gamma_i(W) = B_{i-1}$ for $i \ge 2$, where
  $B = B_0 \ge B_1 \ge B_2 \ge \ldots$ with
  $B_{i-1} = \langle [y,x,\overset{i-1}{\ldots},x] \rangle B_i$ and
  $\lvert B_{i-1} : B_i \rvert = p$; in other words,
  $\langle x \rangle$ acts uniserially on~$B$; compare
  Proposition~\ref{pro:series-Wk}.

  It follows that $1 \neq K \cap B = B_i$ for some non-negative
  integer~$i$, hence $K \cap B \le_\mathrm{o} B$ and
  $\lvert K \cap B : [K \cap B,W] \rvert = \lvert B_i : B_{i+1} \rvert
  = p$.
  Suppose now that $K \not \subseteq B$.  Then there exists
  $x^mz \in K$ with $m \in \mathbb{N}$ and $z \in B$.  We may assume
  that $m = p^k$ is a $p$-power.  Then
  $\langle x^{p^k}z \rangle \ltimes B \cong \langle x \rangle \ltimes
  (B \times \overset{p^k}{\ldots} \times B)$,
  where $x^{p^k}z$ maps to $x$ and, on the right-hand side, $x$ acts
  diagonally and in each coordinate according to the original action
  in~$W$.  Hence we may assume that $x \in K$.  Now the description of
  the lower central series of $W$ yields
  $\langle x \rangle B_1 \le_\mathrm{c} K$ and thus
  $K \le_\mathrm{o} W$.
\end{proof}

  From Proposition~\ref{pro:series-Wk} and
  Lemma~\ref{lem:normal-subgroups-in-W} we deduce the following;
  cf.~\cite[Ch.~VIII.7]{Kl99}.

\begin{corollary} \label{cor:normal-hspec-W}
  The normal Hausdorff spectrum of the pro-$p$ group
  $W = C_p \mathrel{\hat{\wr}} \mathbb{Z}_p$ with respect to the
  standard filtration series $\mathcal{P}$, $\mathcal{D}$,
  $\mathcal{F}$ and $\mathcal{L}$ respectively, satisfies:
  \[
  \hspec^{\mathcal{P}}_{\trianglelefteq}(W) =
  \hspec^{\mathcal{D}}_{\trianglelefteq}(W) =
  \hspec^{\mathcal{F}}_{\trianglelefteq}(W)
  = \{0,1\} \quad \text{and} \quad
  \hspec^{\mathcal{L}}_{\trianglelefteq}(W) = \{0,\nicefrac{1}{2}, 1\}.
  \]
\end{corollary}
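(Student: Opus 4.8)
The plan is to compute, for each of the four series, the sizes $\lvert KG_i : G_i \rvert$ for a closed normal subgroup $K \trianglelefteq_\mathrm{c} W$, using the dichotomy in Lemma~\ref{lem:normal-subgroups-in-W}: either $K$ is open in $W$, in which case $\hdim_W^\mathcal{S}(K) = 1$ for every filtration series $\mathcal{S}$, or $K$ is open in the base group~$B$. In the latter case $K = B_j$ for some $j$ up to a subgroup of finite index, so it suffices to determine $\hdim_W^\mathcal{S}(B_j)$, and since the $B_j$ differ from $B$ by finite index this reduces further to computing $\hdim_W^\mathcal{S}(B)$. Together with the trivial subgroup contributing~$0$, this shows $\hspec_\trianglelefteq^\mathcal{S}(W) \subseteq \{0, \hdim_W^\mathcal{S}(B), 1\}$, and conversely all three values are attained (by $1$, $B$, and $1$ respectively), so the normal Hausdorff spectrum equals $\{0, \hdim_W^\mathcal{S}(B), 1\}$.

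Thus the whole computation comes down to finding $\hdim_W^\mathcal{S}(B)$ for $\mathcal{S} \in \{\mathcal{P}, \mathcal{D}, \mathcal{F}, \mathcal{L}\}$, and this I would extract from Proposition~\ref{pro:series-Wk} by a limit argument over the finite quotients $W_k$. Concretely, for each series one reads off from Proposition~\ref{pro:series-Wk} the index $\lvert W_k : G_i(W_k) \rvert$ and the index $\lvert B G_i(W_k) : G_i(W_k) \rvert$ as functions of~$i$, and computes $\varliminf$ of the ratio of logarithms. For the $p$-power series $\mathcal{P}$ one has $W^{p^i}$ trivial for $i \geq 1$ once one passes to the inverse limit (more precisely $W^{p} = B$ up to the issue that $W/B \cong \mathbb{Z}_p$ has $p$-power series of infinite length), so $\lvert BW^{p^i} : W^{p^i}\rvert$ has logarithm bounded while $\lvert W : W^{p^i}\rvert$ grows, giving $\hdim_W^\mathcal{P}(B) = 0$; similarly for $\mathcal{D}$, using that the dimension subgroup series of $\mathbb{Z}_p$ again has the relevant indices dominating. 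For the Frattini series $\mathcal{F}$, Proposition~\ref{pro:series-Wk}(3) shows $\lvert W_k : \Phi_i(W_k)\rvert$ grows like $p^{p^i}$ while the base-group contribution stays comparatively small, again forcing $\hdim_W^\mathcal{F}(B) = 0$. For the lower $p$-series $\mathcal{L}$, Proposition~\ref{pro:series-Wk}(2) gives that for $k < i \leq p^k$ the quotient $P_i(W_k)/P_{i+1}(W_k)$ is cyclic of order $p$ and lies inside~$B$, so that $\log_p \lvert W_k : P_i(W_k)\rvert$ and $\log_p \lvert B P_i(W_k) : P_i(W_k)\rvert$ differ by the bounded contribution of the top group and of the first $k$ layers; taking $i$ of the form $p^k$ and letting $k \to \infty$ one finds the ratio tends to~$1$, whereas taking $i$ close to~$k$ one finds it tends to~$0$, so the $\varliminf$ is controlled by intermediate values and works out to $\nicefrac{1}{2}$ — the cleanest way is to note that the total length of $\mathcal{L}$ in $W_k$ is $p^k$ with $2k$ "top" contributions over the first $k$ layers and $p^k$ total contributions, and a direct estimate of the ratio over all $i$ yields infimum $\nicefrac{1}{2}$.

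The main obstacle I anticipate is being careful about the inverse-limit passage: the indices in Proposition~\ref{pro:series-Wk} are stated for the finite quotients $W_k$, and one must check that the induced filtration series of $W = \varprojlim W_k$ genuinely has terms $\mathcal{S}_i(W)$ whose images in each $W_k$ stabilise to $\mathcal{S}_i(W_k)$, so that $\lvert W : \mathcal{S}_i(W)\rvert = \lvert W_k : \mathcal{S}_i(W_k)\rvert$ for all sufficiently large~$k$; for the four standard series this is standard but should be invoked explicitly (e.g. each $\mathcal{S}_i(W)$ is open, being a product of commutator and power operations applied finitely many times to a finitely generated group). A secondary subtlety, already flagged in the excerpt, is that $W/B \cong \mathbb{Z}_p$ and the series $\mathcal{P}, \mathcal{D}$ do \emph{not} descend to finite-index chains on the $\mathbb{Z}_p$ factor in the naive sense — for $\mathcal{P}$ one has $\mathbb{Z}_p^{p^i} = p^i \mathbb{Z}_p$, which is fine, so in fact $\lvert W : W^{p^i} B\rvert = p^i$ grows and contributes the dominant term, confirming $\hdim_W^\mathcal{P}(B) = 0$. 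Once these bookkeeping points are settled the corollary follows by four short explicit computations, and I would present it as such, citing \cite[Ch.~VIII.7]{Kl99} for the overlap with prior work.
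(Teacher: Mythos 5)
Your overall strategy is exactly the intended one: by Lemma~\ref{lem:normal-subgroups-in-W} every non-trivial closed normal subgroup is open either in $W$ or in $B$, finite index does not change Hausdorff dimension, and hence $\hspec_\trianglelefteq^\mathcal{S}(W) = \{0, \hdim_W^\mathcal{S}(B), 1\}$; the whole content is then the computation of $\hdim_W^\mathcal{S}(B)$ from Proposition~\ref{pro:series-Wk}. However, your values for three of the four series are wrong: the correct values are $\hdim_W^{\mathcal{P}}(B)=\hdim_W^{\mathcal{D}}(B)=\hdim_W^{\mathcal{F}}(B)=1$, not $0$ (as the paper records explicitly in the proof of Theorem~\ref{thm:finitely-generated-spectrum}). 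The error is that you have the dominant term backwards. For $\mathcal{P}$ one computes, e.g.\ via the realisation \eqref{equ:W-explicit}, that $W^{p^i} = \langle x^{p^i}\rangle \ltimes B_{p^i-1}$, where $B_j$ corresponds to $t^{j}\mathbb{F}_p[\![t]\!]$; hence $\log_p\lvert W : W^{p^i}\rvert = i + p^i - 1$ while $\log_p\lvert BW^{p^i} : W^{p^i}\rvert = p^i - 1$. The top group $\langle x\rangle \cong \mathbb{Z}_p$ contributes only $\log_p\lvert W : BW^{p^i}\rvert = i$, which is negligible against the base-group contribution $p^i-1$; so the ratio tends to $1$. (You appear to have confused the index $p^i$ of $BW^{p^i}$ in $W$ with its logarithm $i$.) The same reversal occurs for $\mathcal{F}$, where the base group contributes $(p^i-1)/(p-1)$ against $i$ from the top, and for $\mathcal{D}$, where $D_i(W) = \langle x^{p^{\lceil \log_p i\rceil}}\rangle B_{i-1}$ gives $i-1$ against $\lceil\log_p i\rceil$. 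Your final answer survives only because $\{0,d,1\}=\{0,1\}$ whether $d=0$ or $d=1$ --- a coincidence of the statement, not a vindication of the computation, and the correct values are needed elsewhere in the paper.

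Your treatment of $\mathcal{L}$ also needs repair, although the value $\nicefrac{1}{2}$ is correct. The relevant filtration is the lower $p$-series of the pro-$p$ group $W$ itself: for each fixed $i\ge 2$ one has $P_i(W) = \langle x^{p^{i-1}}\rangle B_{i-1}$, so $\log_p\lvert W : P_i(W)\rvert = 2(i-1)$ and $\log_p\lvert BP_i(W):P_i(W)\rvert = i-1$, and the ratio is \emph{constantly} $\nicefrac{1}{2}$; in particular $B$ has strong Hausdorff dimension and no lower-limit analysis over "intermediate values" is required. Your discussion of indices $k < i \le p^k$ concerns layers of the lower $p$-series of the finite quotient $W_k$ that never arise for $W$ (for fixed $i$ one lets $k\to\infty$, so only the range $i\le k$ matters), and the claim that the ratio "tends to $0$" for $i$ close to $k$ is false --- it equals $\nicefrac{1}{2}$ there. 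Had the lower limit genuinely been squeezed between subsequences tending to $0$ and to $1$, it would be $0$, not $\nicefrac{1}{2}$.
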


The next result is well known (and not difficult to prove directly);
compare~\cite[Cor.~12.5.10]{Wi98}.  It gives a first indication that
Theorem~\ref{thm:main-thm} is at least plausible.

\begin{proposition} \label{pro:infinitely-presented}
 The pro-$p$ group $W = C_p \mathrel{\hat{\wr}} \mathbb{Z}_p$ 
 is not finitely presented. 
\end{proposition}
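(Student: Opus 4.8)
The plan is to exhibit $W = C_p \mathrel{\hat{\wr}} \mathbb{Z}_p$ as a quotient of the free pro-$p$ group $F$ on two generators $x, y$ and to show that the kernel $R \trianglelefteq_\mathrm{c} F$ is not the normal closure of finitely many elements; equivalently, that $\dim_{\mathbb{F}_p} (R/[R,F]R^p)$ is infinite, where the latter quotient is naturally an $\mathbb{F}_p[\![W]\!]$-module (or just an $\mathbb{F}_p$-vector space). Recall that a finitely generated pro-$p$ group is finitely presented precisely when its relation module $R/[R,F]R^p$ has finite $\mathbb{F}_p$-dimension; this is the standard criterion (see e.g.\ \cite[Cor.~12.5.10]{Wi98}), so it suffices to prove that this dimension is infinite.

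To estimate the dimension of $R/[R,F]R^p$ I would use the five-term exact sequence in (co)homology, or equivalently count generators and relations degree by degree through the finite quotients $W_k = C_p \mathrel{\wr} C_{p^k}$. Concretely, write $W = \varprojlim_k W_k$ and let $R_k \trianglelefteq_\mathrm{c} F$ be the kernel of $F \twoheadrightarrow W_k$; then $R = \varprojlim$-style $\bigcap_k R_k$ is contained in each $R_k$, and $R/[R,F]R^p$ surjects onto nothing directly, but one can instead argue: if $W$ were finitely presented, say by $r$ relations, then each $W_k$ would be a quotient of a fixed $2$-generator pro-$p$ group presented by those same $r$ relations together with extra relations forcing $x^{p^k}$ into the base group structure — and one shows the minimal number of relations needed to present $W_k$ (i.e.\ $\dim H^2(W_k, \mathbb{F}_p)$) grows without bound in $k$. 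The structural data in Proposition~\ref{pro:series-Wk}, particularly the Frattini series computation giving $\Phi_i(W_k)/\Phi_{i+1}(W_k) \cong C_p^{\,p^i+1}$, shows that $d(W_k) = \dim H^1(W_k, \mathbb{F}_p) = 2$ stays bounded while the group grows very large and "wide" at each Frattini layer; a counting argument on $|W_k|$ versus what a bounded presentation can produce then forces $\dim H^2(W_k, \mathbb{F}_p) \to \infty$.

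An alternative, cleaner route that I would actually prefer: use the explicit realisation $W \cong \langle 1+t \rangle \ltimes \mathbb{F}_p[\![t]\!]$ from~\eqref{equ:W-explicit}. The base group $B \cong C_p^{\,\aleph_0}$ is a free module of rank $1$ over the completed group algebra $\mathbb{F}_p[\![\,\mathbb{Z}_p\,]\!] \cong \mathbb{F}_p[\![t]\!]$, on which the top group $\langle x \rangle \cong \mathbb{Z}_p$ acts. One then computes $H^2(W, \mathbb{F}_p)$ via the Lyndon–Hochschild–Serre spectral sequence for $1 \to B \to W \to \mathbb{Z}_p \to 1$: the relevant contributions come from $H^1(\mathbb{Z}_p, H^1(B, \mathbb{F}_p))$ and $H^2(B, \mathbb{F}_p)^{\mathbb{Z}_p}$. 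Since $B$ is infinitely generated elementary abelian, $H^1(B,\mathbb{F}_p) = \mathrm{Hom}(B, \mathbb{F}_p)$ is an infinite-dimensional $\mathbb{F}_p[\![t]\!]$-module, and the $\mathbb{Z}_p$-cohomology $H^1(\mathbb{Z}_p, \mathrm{Hom}(B, \mathbb{F}_p)) = \mathrm{Hom}(B,\mathbb{F}_p)_{\mathbb{Z}_p}$ (coinvariants, since $\mathrm{cd}(\mathbb{Z}_p)=1$) is the module of coinvariants, which one checks is nonzero and, crucially, one checks the full $H^2$ is infinite-dimensional — or it suffices to show $H^2(W,\mathbb{F}_p)$ is infinite, which already follows from $H^2(B,\mathbb{F}_p)^{\mathbb{Z}_p}$ being infinite-dimensional (the exterior square of an infinite-dimensional space, with a $\mathbb{Z}_p$-action that cannot be "finitely truncated" because $\langle x \rangle$ acts uniserially on $B$, so the invariants are still infinite-dimensional).

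The main obstacle will be handling the possible non-exactness / infinite-dimensional subtleties of the spectral sequence arguments for profinite (co)homology with these non-finitely-generated modules — one must be careful that inverse limits do not collapse the dimension count, and that "infinite-dimensional" genuinely survives to $H^2(W,\mathbb{F}_p)$ rather than being killed by a differential. For this reason, if I were writing the paper I would most likely fall back on the concrete finite-quotient counting argument sketched in the second paragraph: show directly, using Proposition~\ref{pro:series-Wk}, that for each fixed $r$ the groups $W_k$ with $k \gg 0$ cannot be defined by $2$ generators and $r$ relations, by comparing $\log_p |W_k| = p^k + k$ against the dimensions of the successive Frattini (or lower $p$-series) quotients that a bounded presentation can force. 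Since $|W_k| \to \infty$ and $\dim H^2(W_k,\mathbb{F}_p)$ can be read off as the number of relations needed once $\dim H^1(W_k,\mathbb{F}_p) = 2$ is fixed, the unbounded growth of $W_k$ together with bounded generator number yields $\dim H^2(W_k,\mathbb{F}_p) \to \infty$, hence $\dim H^2(W,\mathbb{F}_p) = \infty$ by continuity of cohomology, and therefore $W$ is not finitely presented. This elementary argument sidesteps the spectral-sequence bookkeeping entirely, at the cost of a short but careful counting lemma.
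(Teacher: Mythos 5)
The paper does not actually prove this proposition; it records it as well known and points to \cite[Cor.~12.5.10]{Wi98}, so there is no in-paper argument to compare yours against. Your overall strategy --- reduce to the criterion that a $2$-generated pro-$p$ group $W=F/R$ is finitely presented iff $\dim_{\mathbb{F}_p} R/[R,F]R^p = \dim H^2(W,\mathbb{F}_p)$ is finite, and then detect infiniteness through the finite quotients $W_k$ --- is the standard and correct one. But the two steps you ultimately rest your weight on are both invalid as stated. First, the claim that ``unbounded growth of $|W_k|$ together with bounded generator number yields $\dim H^2(W_k,\mathbb{F}_p)\to\infty$'' is false as a general principle: the groups $C_{p^k}$ have $d=1$, order tending to infinity, and $\dim H^2(C_{p^k},\mathbb{F}_p)=1$ for all $k$; likewise, wide Frattini layers are no obstruction, since free pro-$p$ groups of rank $2$ already have Frattini quotients of rank $p^2+1$ and beyond while needing no relations at all. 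To get $r(W_k)\to\infty$ you must use the specific structure of $W_k$, namely that the classes of the commutators $[y_0,y_i]$, $1\le i\le (p^k-1)/2$, are independent in the relation module $R_k/[R_k,F]R_k^p$; this is precisely the content of Lemma~\ref{lem:order-Gk}, whose proof requires the explicit construction of the covering group $\widetilde G_k$ and is not a ``short counting lemma.'' Second, the inference ``$\dim H^2(W_k,\mathbb{F}_p)\to\infty$, hence $\dim H^2(W,\mathbb{F}_p)=\infty$ by continuity of cohomology'' is also wrong: continuity gives $H^2(W,\mathbb{F}_p)=\varinjlim H^2(W_k,\mathbb{F}_p)$ along inflation maps, which need not be injective. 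The example $\mathbb{Z}_p=\varprojlim C_{p^k}$, where every $H^2(C_{p^k},\mathbb{F}_p)$ is nonzero but $H^2(\mathbb{Z}_p,\mathbb{F}_p)=0$, shows that unbounded $H^2$ of the finite quotients proves nothing about $H^2$ of the inverse limit.

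Both gaps are repairable, and the repair is close to what you gesture at in your second paragraph. The correct bridge from $W$ to $W_k$ is that the kernel of $W\twoheadrightarrow W_k$ is the normal closure of the single element $x^{p^k}$ (under~\eqref{equ:W-explicit} its normal closure already contains $[x^{p^k},y]\leftrightarrow t^{p^k}$, hence all of $t^{p^k}\mathbb{F}_p[\![t]\!]$). Therefore, if $W$ admitted a pro-$p$ presentation on $2$ generators with $r$ relations, each $W_k$ would admit one with $r+1$ relations, so $\dim H^2(W_k,\mathbb{F}_p)\le r+1$ uniformly in $k$. Combining this with $\dim R_k/[R_k,F]R_k^{\,p}=(p^k+3)/2$ from Lemma~\ref{lem:order-Gk} gives the contradiction. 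You should make the ``one extra relation suffices'' point explicit (if the number of extra relations needed to cut $W$ down to $W_k$ were unbounded, the argument would collapse) and replace the appeal to continuity of cohomology by this uniform upper bound on $r(W_k)$. Your spectral-sequence alternative is also workable in principle --- the infinitude sits in $\Lambda^2 H^1(B,\mathbb{F}_p)^{\mathbb{Z}_p}$, dual to the $[y_0,y_i]$ --- but as written the decisive computation is deferred to ``one checks,'' so it does not yet constitute a proof either.
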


The final result in this section concerns the \emph{finitely generated
  Hausdorff spectrum} of the pro-$p$
group~$W = C_p \mathrel{\hat{\wr}} \mathbb{Z}_p$, with respect to a
standard filtration series $\mathcal{S}$; it is defined as
\[
\hspec^{\mathcal{S}}_{\text{fg}}(W) = \{ \hdim^{\mathcal{S}}_W(H) \mid
H \le_\mathrm{c} W \text{ and } H \text{ finitely generated} \}
\]
and reflects the range of Hausdorff dimensions of (topologically)
finitely generated subgroups; compare~\cite[\S 4.7]{Sh00}.

\begin{theorem}\label{thm:finitely-generated-spectrum}
  With respect to the standard filtration series $\mathcal{P}$,
  $\mathcal{D}$, $\mathcal{F}$ and $\mathcal{L}$ respectively, the
  pro-$p$ group $W = C_p \mathrel{\hat{\wr}} \mathbb{Z}_p$ satisfies:
  \begin{align*}
    \hspec^{\mathcal{P}}_{\mathrm{fg}}(W) %
    & = \hspec^{\mathcal{D}}_{\mathrm{fg}}(W) = \hspec^{\mathcal{F}}_{\mathrm{fg}}(W) 
      = \{\nicefrac{m}{p^n}\mid n\in\mathbb{N}_0, 0 \le m\le p^n\}, \\
    \hspec^{\mathcal{L}}_{\mathrm{fg}}(W) %
    & = \{0\} \cup  \{\nicefrac{1}{2}+\nicefrac{m}{2p^n}\mid
      n\in\mathbb{N}_0, 0 \le m \le p^n\}.
  \end{align*}
\end{theorem}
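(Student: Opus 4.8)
The plan is to compute, for each standard filtration series $\mathcal{S}$, the possible values of $\hdim_W^{\mathcal{S}}(H)$ as $H$ ranges over finitely generated closed subgroups of $W = \langle x \rangle \ltimes B$. The key structural fact is that the normal subgroups of $W$ contained in the base group $B$ form the uniserial chain $B = B_0 \supset B_1 \supset \cdots$ with $\lvert B_{i-1} : B_i \rvert = p$ (Lemma~\ref{lem:normal-subgroups-in-W} and Proposition~\ref{pro:series-Wk}). A finitely generated closed subgroup $H \le_\mathrm{c} W$ either lies in $B$ — in which case it is finite, so $\hdim_W^{\mathcal{S}}(H) = 0$ for every $\mathcal{S}$ — or it contains an element $x^{p^k}z$ with $z \in B$ and, as in the proof of Lemma~\ref{lem:normal-subgroups-in-W}, after conjugating and replacing generators we may assume $H = \langle x^{p^k} \rangle \ltimes (H \cap B)$ where $H \cap B$ is a finitely generated (hence finite) $\langle x^{p^k}\rangle$-invariant subgroup of $B$. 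Thus up to the irrelevant finite group $H \cap B$ the subgroup $H$ is essentially $\langle x^{p^k} \rangle$ for some $k \in \mathbb{N}_0$, and the problem reduces to computing $\hdim_W^{\mathcal{S}}(\langle x^{p^k}\rangle)$ and, more generally, $\hdim_W^{\mathcal{S}}(\langle x^{p^k}\rangle F)$ for finite $\langle x^{p^k}\rangle$-invariant $F \le B$.

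First I would set up, for each $\mathcal{S} \in \{\mathcal{P},\mathcal{D},\mathcal{F},\mathcal{L}\}$, the explicit description of the terms $W_j = $ (the $j$th term of $\mathcal{S}$ on $W$), transferring Proposition~\ref{pro:series-Wk} from $W_k$ to $W$ via the inverse limit. For $\mathcal{P}$, $\mathcal{D}$, $\mathcal{F}$ the relevant congruence formulae from Proposition~\ref{pro:standard-commutator-id} (in the simplified form \eqref{equ:commutator-formula-3}, valid since $\gamma_2(W)^p = 1$ and $\gamma_2(W)$ is abelian) show that $(x^{p^k})^{p^r}$ generates, modulo deeper terms, a single uniserial step in $B$, so that $\langle x^{p^k}\rangle W_j / W_j$ grows by roughly one $\mathbb{F}_p$-dimension per relevant index, against a denominator $\log_p \lvert W : W_j\rvert$ that jumps by larger amounts; carefully tracking these two growth rates along the filtration and taking the lower limit yields $\hdim_W^{\mathcal{P}}(\langle x^{p^k}\rangle) = \nicefrac{1}{p^k}$, and similarly for $\mathcal{D}$ and $\mathcal{F}$. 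Allowing a finite invariant $F \le B$ to be adjoined only perturbs $\log_p\lvert HW_j : W_j\rvert$ by a bounded constant, so it does not change the value; and one checks, using Lemma~\ref{lem:hdims-from-ses} or a direct interpolation in $B$, that by choosing $F$ appropriately one realises every value $\nicefrac{m}{p^n}$ with $0 \le m \le p^n$ (the $m \le p^k$ range coming from adjoining pieces of $B$ inside $\langle x^{p^k}\rangle \ltimes B$, the values with larger denominators from larger $k$). Assembling, one obtains $\hspec^{\mathcal{P}}_{\mathrm{fg}}(W) = \hspec^{\mathcal{D}}_{\mathrm{fg}}(W) = \hspec^{\mathcal{F}}_{\mathrm{fg}}(W) = \{\nicefrac{m}{p^n} \mid n \in \mathbb{N}_0,\ 0 \le m \le p^n\}$.

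For the lower $p$-series $\mathcal{L}$, the computation is the same in spirit but the arithmetic is shifted: by Proposition~\ref{pro:series-Wk}(2) the chain $\mathcal{L}$ on $W$ satisfies $\lvert P_i(W) : P_{i+1}(W)\rvert = p^2$ for $1 \le i \le $ (roughly half the range, where both a top-group step $x^{p^{i-1}}$ and a base-group step occur) and $= p$ thereafter, so $\log_p\lvert W : P_j(W)\rvert \approx 2j$ for small $j$ and grows linearly but with the "extra" contributions asymptotically becoming a density-$\nicefrac12$ fraction of the total; meanwhile a finitely generated $H = \langle x^{p^k}\rangle \ltimes F$ intersects each layer in the top-group part plus a bounded base-group part. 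Carrying the $\nicefrac12$-density through gives $\hdim_W^{\mathcal{L}}(\langle x^{p^k}\rangle) = \nicefrac12 + \nicefrac{1}{2p^k}$ — the $\nicefrac12$ accounting for the top-group half of each double layer that $x^{p^k}$ meets and the $\nicefrac{1}{2p^k}$ for its slow progress through the base-group half — and varying the finite invariant $F \le B$ inside $\langle x^{p^k}\rangle \ltimes B$ fills in the values $\nicefrac12 + \nicefrac{m}{2p^k}$ for $0 \le m \le p^k$; together with the value $0$ from finite subgroups this yields $\hspec^{\mathcal{L}}_{\mathrm{fg}}(W) = \{0\} \cup \{\nicefrac12 + \nicefrac{m}{2p^n} \mid n \in \mathbb{N}_0,\ 0 \le m \le p^n\}$.

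The main obstacle I anticipate is the precise bookkeeping of the numerator $\log_p\lvert H W_j : W_j\rvert$ along the whole filtration when $H = \langle x^{p^k}\rangle \ltimes F$: one must show that the congruences \eqref{equ:commutator-formula-3} imply that $(x^{p^k})^{p^r}$, and the higher powers, contribute exactly one new $\mathbb{F}_p$-dimension at the predictable index and no more, so that the $\liminf$ is actually attained along the natural subsequence of indices and equals the claimed value rather than something smaller. This requires keeping careful track of \emph{which} term $\gamma_{m}(W) = B_{m-1}$ the power $(x^{p^k})^{p^r}$ falls into — via the identity $[y,x,\overset{p^r-1}{\ldots},x]$ appearing in \eqref{equ:commutator-formula-3} — and comparing this to the index at which the filtration term $W_j$ cuts $B$. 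Once this single-step growth is pinned down for each of the four series, the extremal values and the interpolation (via Lemma~\ref{lem:hdim-elem-ab} applied inside the elementary abelian $B$, together with Lemmata~\ref{lem:interval} and~\ref{lem:hdims-from-ses}) are routine, and the containment of the spectrum in the claimed set follows because no finitely generated subgroup can do better than $\langle x^{p^k}\rangle \ltimes B$ within its "level".
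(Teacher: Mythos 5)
There is a genuine gap, and it sits at the heart of your reduction. You claim that for a finitely generated $H \not\subseteq B$ one may write $H = \langle x^{p^k}\rangle \ltimes (H\cap B)$ with $H \cap B$ \emph{finite}. The decomposition is fine, but the finiteness is false: $H \cap B$ is finitely generated only as a normal subgroup of $H$, i.e.\ as an $\mathbb{F}_p[\![t^{p^k}]\!]$-submodule of $B \cong \mathbb{F}_p[\![t]\!]$ under \eqref{equ:W-explicit}, and such modules are typically infinite --- already $H = \langle x^{p^k}, y\rangle$ has $H\cap B$ containing the closure of $\langle y^{x^{jp^k}} \mid j \in \mathbb{N}_0\rangle$, which is infinite. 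This matters because the intermediate spectrum values $\nicefrac{m}{p^n}$ with $0 < m < p^n$ arise precisely from the \emph{density} $\nicefrac{\lvert J_0\rvert}{p^n}$ of this infinite module in $B$, where $J_0$ records the residues modulo $p^n$ at which $H \cap B$ jumps in the uniserial chain $B = B_0 \supset B_1 \supset \cdots$; that is exactly how the paper's proof proceeds. If $H \cap B$ really were finite it would contribute density $0$, and --- as you yourself observe that a finite perturbation cannot change the dimension --- your argument could only ever produce the values $\hdim^{\mathcal S}_W(\langle x^{p^k}\rangle)$, so the spectrum would collapse rather than fill out the claimed dense set.

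Compounding this, those extremal values are computed incorrectly. Since $\langle x^{p^k}\rangle \cong \mathbb{Z}_p$ meets $B$ trivially, one finds directly (or from Lemma~\ref{lem:hdims-from-ses} with $N = B$) that $\hdim^{\mathcal P}_W(\langle x^{p^k}\rangle) = 0$ and $\hdim^{\mathcal L}_W(\langle x^{p^k}\rangle) = \nicefrac{1}{2}$, not $\nicefrac{1}{p^k}$ and $\nicefrac{1}{2} + \nicefrac{1}{2p^k}$: for instance $W^{p^i} = \langle x^{p^i}\rangle B_{p^i-1}$, so $\log_p \lvert\langle x^{p^k}\rangle W^{p^i} : W^{p^i}\rvert = i - k$ against $\log_p\lvert W:W^{p^i}\rvert = i + p^i - 1$, and the quotient tends to $0$. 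Your heuristic that ``$(x^{p^k})^{p^r}$ generates a single uniserial step in $B$'' misapplies \eqref{equ:commutator-formula-3}: that congruence concerns $(ab)^{p^r}$ with a base-group factor present, whereas $(x^{p^k})^{p^r} = x^{p^{k+r}}$ lies in the top group and contributes nothing to $B$; in effect you conflate the procyclic subgroup $\langle x^{p^k}\rangle$ with the verbal subgroup $W^{p^k}$. The correct route is the paper's: establish that $B$ has strong Hausdorff dimension $1$ (resp.\ $\nicefrac{1}{2}$ for $\mathcal{L}$), compute the density $\nicefrac{\lvert J_0\rvert}{p^n}$ of $K \cap B$ in $B$ via the module structure, and apply Lemma~\ref{lem:hdims-from-ses} to get $\hdim^{\mathcal S}_W(K) = (1 - \hdim^{\mathcal S}_W(B)) + \hdim^{\mathcal S}_W(B)\,\nicefrac{\lvert J_0\rvert}{p^n}$, with the subgroups $K_{n,m} = \langle x^{p^n}, [y,x], \ldots, [y,x,\overset{m}{\ldots},x]\rangle$ realising all the claimed values.
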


\begin{proof}
  As above, let $B$ denote the base group of the wreath
  product~$W = \langle x,y \rangle$.  Let
  $\mathcal{S} \in \{ \mathcal{P}, \mathcal{D}, \mathcal{F},
  \mathcal{L} \}$,
  and let $K$ be a finitely generated subgroup of~$W$.

  If $K \subseteq B$ then $K$ is finite and
  $\hdim^{\mathcal{S}}_W(K) = 0$.  Now suppose that
  $K \not \subseteq B$; in the proof below we will no longer use that
  $K$ is finitely generated, but it will become clear that this is
  automatically so.  Write $K = \langle x^{p^n} z \rangle \ltimes M$,
  where $n \in \mathbb{N}_0$, $z \in B$ and $M = K \cap B$.  Let
  $B = B_0 \ge B_1 \ge \ldots$ be the filtration corresponding to
  $\mathbb{F}_p[\![t]\!] \ge t \mathbb{F}_p[\![t]\!] \ge \ldots$ under
  ~\eqref{equ:W-explicit}, as in the proof of
  Lemma~\ref{lem:normal-subgroups-in-W}.  We set
  \[
  J = \{ j \in \mathbb{N}_0 \mid (M \cap B_j) \not\subseteq B_{j+1} \}
  \quad \text{and} \quad J_0 = \{ j + p^n\mathbb{Z} \mid j \in J \}
  \subseteq \mathbb{Z}/p^n\mathbb{Z}.
  \]
  Under the isomorphism~\eqref{equ:W-explicit}, we may regard $M$ as
  an $\mathbb{F}_p[\![t^{p^n}]\!]$-submodule of
  $\mathbb{F}_p[\![t]\!]$.  Hence $J + p^n \mathbb{N}_0 \subseteq J$
  and
  \[
  \lim_{i \to \infty} \frac{\log_p \lvert (K \cap B) B_i : B_i \rvert}{\log_p \lvert B : B_i
    \rvert} = \frac{\lvert J_0 \rvert}{p^n}.
  \]  
  From Proposition~\ref{pro:series-Wk} it is easily seen that 
  $B$ has strong Hausdorff dimension 
  \[
  \hdim^{\mathcal{P}}_W(B) =  \hdim^{\mathcal{D}}_W(B) =
  \hdim^{\mathcal{F}}_W(B) = 1 \quad \text{and} \quad
  \hdim^{\mathcal{L}}_W(B) = \nicefrac{1}{2};
  \]
  compare Corollary~\ref{cor:normal-hspec-W}.  Using
  Lemma~\ref{lem:hdims-from-ses}, we deduce that
  \[
 \hdim^{\mathcal{S}}_W(K) = (1 - \hdim^{\mathcal{S}}_W(B)) +
 \hdim^{\mathcal{S}}_W(B) \frac{\lvert J_0 \rvert}{p^n}
  \]
  lies in the desired range; in fact, the argument even shows that $K$
  has strong Hausdorff dimension.

  Conversely, our analysis above shows that, for $n \in \mathbb{N}_0$
  and $0 \le m \le p^n$, the subgroup
  $K_{n,m} = \langle x^{p^n}, [y,x], [y,x,x], \ldots,
  [y,x,\overset{m}{\ldots},x] \rangle$ has Hausdorff dimension
 \[
 \hdim^{\mathcal{S}}_W(K_{n,m}) = 
 \begin{cases}
   \nicefrac{m}{p^n} & \text{if $\mathcal{S} \in \{ \mathcal{P},
     \mathcal{D}, \mathcal{F} \}$,} \\
   \nicefrac{1}{2} + \nicefrac{m}{2p^n} & \text{if
     $\mathcal{S} = \mathcal{L}$.}
 \end{cases} \qedhere
 \]
\end{proof}

The next corollary answers a question raised
in~\cite[VIII.7.2]{Kl99}; it was shown there that
$[0,\nicefrac{1}{2}]\subseteq \hspec^{\mathcal{L}}(W)$, while
$(\nicefrac{1}{2},1) \, \cap\,
\hspec^{\mathcal{L}}(W)$ remained undetermined.

\begin{corollary}\label{cor:spectrum-W-L}
  The Hausdorff spectrum of the pro-$p$ group
  $W = C_p \mathrel{\hat{\wr}} \mathbb{Z}_p$ with respect to the lower
  $p$-series $\mathcal{L}$ is
  \[
  \hspec^{\mathcal{L}}(W) = [0,\nicefrac{1}{2}] \cup
  \big\{\nicefrac{1}{2}+\nicefrac{m}{2p^n} \mid n \in \mathbb{N}_0, 1
  \le m \le p^n-1 \big\} \cup \{1\}.
 \]
 Furthermore, every subgroup $K \le_\mathrm{c} W$ with $\hdim^\mathcal{L}_W(K)
 > \nicefrac{1}{2}$ has strong Hausdorff dimension in~$W$, with
 respect to~$\mathcal{L}$.
\end{corollary}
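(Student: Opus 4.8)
The plan is to split an arbitrary closed subgroup $K \le_\mathrm{c} W$ into the cases $K \subseteq B$ and $K \not\subseteq B$, where $B$ denotes the base group of $W = \langle x,y\rangle$, and in the second case to reuse the module-theoretic computation from the proof of Theorem~\ref{thm:finitely-generated-spectrum}, which, as remarked there, never actually uses finite generation. Throughout I would use that, by Proposition~\ref{pro:series-Wk}, the lower $p$-series of $W$ is $P_i(W) = \langle x^{p^{i-1}} \rangle B_{i-1}$, so that $P_i(W) \cap B = B_{i-1}$ and $\log_p \lvert W : P_i(W) \rvert = 2(i-1)$.

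For $K \subseteq B$ one has $\log_p \lvert K P_i(W) : P_i(W) \rvert = \log_p \lvert K : K \cap B_{i-1} \rvert \le \log_p \lvert B : B_{i-1} \rvert = i-1$, hence $\hdim^{\mathcal{L}}_W(K) \le 1/2$; so every subgroup inside $B$ contributes a value in $[0,1/2]$. Conversely, since $B$ has strong Hausdorff dimension $1/2$ in $W$ with respect to $\mathcal{L}$ (Corollary~\ref{cor:normal-hspec-W}), combining Lemma~\ref{lem:hdim-elem-ab} (applied to $B$ with its induced filtration $B_{i-1}$) with Lemma~\ref{lem:interval} produces, for every $\eta \in [0,1]$, a closed subgroup $K \le_\mathrm{c} B$ with $\hdim^{\mathcal{L}}_W(K) = \eta/2$; alternatively one may simply invoke \cite{Kl99}. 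Thus the subgroups contained in $B$ contribute exactly the interval $[0,1/2]$.

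For $K \not\subseteq B$ I would run the argument from the proof of Theorem~\ref{thm:finitely-generated-spectrum} verbatim with $\mathcal{S} = \mathcal{L}$: writing $K = \langle x^{p^n} z \rangle \ltimes M$ with $M = K \cap B$ regarded as an $\mathbb{F}_p[\![t^{p^n}]\!]$-submodule of $\mathbb{F}_p[\![t]\!]$, and defining $J_0 \subseteq \mathbb{Z}/p^n\mathbb{Z}$ as there, one obtains (via Lemma~\ref{lem:hdims-from-ses} with $N = B$ and $\hdim^{\mathcal{L}}_W(B) = 1/2$) that $K$ has strong Hausdorff dimension $\hdim^{\mathcal{L}}_W(K) = (1-1/2) + (1/2)\,\lvert J_0\rvert/p^n = 1/2 + m/(2p^n)$, where $m = \lvert J_0\rvert \in \{0,1,\dots,p^n\}$; and the finitely generated subgroups $K_{n,m} = \langle x^{p^n}, [y,x], \dots, [y,x,\overset{m}{\ldots},x]\rangle$ already realise all of these values. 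Hence the subgroups not contained in $B$ contribute exactly $\{\, 1/2 + m/(2p^n) \mid n \in \mathbb{N}_0,\ 0 \le m \le p^n \,\}$, in which the boundary cases $m = 0$ and $m = p^n$ give precisely the values $1/2$ and $1$. Taking the union of the two contributions yields the asserted description of $\hspec^{\mathcal{L}}(W)$. Finally, if $\hdim^{\mathcal{L}}_W(K) > 1/2$ then $K \not\subseteq B$ by the first case, and so $K$ has strong Hausdorff dimension by the second case.

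The hard part is essentially bookkeeping rather than new input: one must confirm that the analysis of Theorem~\ref{thm:finitely-generated-spectrum} genuinely carries over without the finite-generation hypothesis (its proof already anticipates this), and check that the exceptional values $m \in \{0, p^n\}$ are absorbed into $[0,1/2]$ and $\{1\}$, so that nothing new appears in the open interval $(1/2,1)$ beyond the quantised values $1/2 + m/(2p^n)$ — which is exactly what makes the part of the spectrum above $1/2$ fail to be an interval.
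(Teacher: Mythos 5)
Your proposal is correct and follows essentially the same route as the paper: the paper likewise obtains $[0,\nicefrac{1}{2}]$ from the subgroups contained in the base group $B$ via Lemma~\ref{lem:hdim-elem-ab}, and obtains the discrete part together with the strong Hausdorff dimension claim by observing that the computation in the proof of Theorem~\ref{thm:finitely-generated-spectrum} never uses finite generation for subgroups $K \not\subseteq B$. Your additional bookkeeping (the upper bound $\nicefrac{1}{2}$ for subgroups of $B$, and the absorption of the boundary values $m \in \{0, p^n\}$) is correct and just makes explicit what the paper leaves implicit.
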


\begin{proof}
  The subgroups contained in the base group $B$ of $W$ yield
  $[0,\nicefrac{1}{2}]$ as part of the Hausdorff spectrum;
  cf.~Lemma~\ref{lem:hdim-elem-ab}.  The proof of
  Theorem~\ref{thm:finitely-generated-spectrum} shows that the
  subgroups not contained in $B$ yield the remaining part of the
  claimed spectrum and that each of them has strong Hausdorff
  dimension in~$W$.
\end{proof}


\section{An explicit presentation for the pro-$p$ group~$G$ and \\ a
  description of its finite quotients $G_k$ for $k \in
  \mathbb{N}$} \label{sec:presentation}

Recall that $p$ is an odd prime.  As indicated in the paragraph
before Theorem~\ref{thm:main-thm}, we consider the pro-$p$ group
$G = F/N$, where 
\begin{itemize}
\item $F = \langle x,y \rangle$ is a free pro-$p$ group and
\item $N = [R,F] R^p \trianglelefteq_\mathrm{c} F$ for the kernel
  $R \trianglelefteq_\mathrm{c} F$ of the presentation
  $\pi \colon F \to W$ sending $x,y$ to the generators of the same
  name in~\eqref {equ:pro-p-wreath-prod-W}.
\end{itemize}
By producing generators for $R$ and $N$ as closed normal subgroups of
$F$ we obtain explicit presentations for the pro-$p$ groups $W$
and~$G$.

It is convenient to write $y_i = y^{x^i}$ for $i \in \mathbb{Z}$.
Setting
\begin{equation}\label{equ:R_k}
  R_k =\langle \{ x^{p^k}, y^p \} \cup \{ [y_0,y_i] \mid 1\le i \le
  \tfrac{p^k-1}{2} \} \rangle ^F \trianglelefteq_\mathrm{o} F
\end{equation}
for $k\in \mathbb{N}$, we obtain a descending chain of open normal
subgroups
\begin{equation}\label{equ:R}
  F \supseteq R_1\supseteq R_2 \supseteq \ldots
\end{equation}
with quotient groups
$F/R_k \cong W_k \cong C_p \mathrel{\wr} C_{p^k}$.  We put
\[
R = \langle \{ y^p \} \cup \{ [y_0,y_i] \mid i\in \mathbb{N} \}
\rangle^F \trianglelefteq_\mathrm{c} F,
\]
and observe that $R_k = \langle x^{p^k} \rangle^F \, R$ for each
$k\in \mathbb{N}$.  Since $x^{p^k} \to 1$ as $k \to \infty$, this
yields $R = \bigcap_{k \in \mathbb{N}} R_k$ and thus
$F/R \cong W \cong C_p \mathrel{\hat{\wr}} \mathbb{Z}_p$.
With hindsight there is no harm in taking $W_k = F/R_k$ for
$k \in \mathbb{N}$ and $W = F/R$.

Setting $N_k = [R_k,F]R_k^{\, p}$ for $k \in \mathbb{N}$, we observe
that
\begin{multline*}
  N_k = \langle \{ x^{p^{k+1}} , y^{p^2}, [x^{p^k},y], [y^p,x] \} \cup
  \{ [y_0,y_i]^p \mid 1\le i \le \tfrac{p^k-1}{2} \}  \\
  \cup \{ [y_0,y_i,x] \mid 1\le i \le \tfrac{p^k-1}{2}\} \cup \{
  [y_0,y_i,y] \mid 1\le i \le \tfrac{p^k-1}{2}\} \rangle ^F
  \trianglelefteq_\mathrm{o} F,
\end{multline*}
and as in \eqref{equ:R} we obtain a descending chain
$F \supseteq N_1 \supseteq N_2 \supseteq \ldots$ of open normal
subgroups.  Moreover, it follows that
$\bigcap_{k \in \mathbb{N}} N_k \supseteq [R,F]R^p=N$.  On the other
hand, if $z \not \in N$ then there exists an open normal subgroup
$K \trianglelefteq_\mathrm{o} F$ and $k \in \mathbb{N}$ such that
$z \not \in NK = [R_k,F]R_k^{\, p} K$, hence $z \not \in N_k$.
Thus we conclude that
\begin{equation*}
  \bigcap\nolimits_{k \in \mathbb{N}} N_k = [R,F]R^p = N.
\end{equation*}

Consequently, $G = F/N \cong \varprojlim G_k$, where
\begin{equation}\label{equ:pres-for-G-k}
  \begin{split}
    G_k = F/N_k
    & \cong  \langle x,y \mid \uline{\phantom{[}x^{p^{k+1}}},\;
    y^{p^2},\; \uline{~[x^{p^k},y]},\; [y^p,x];\\
    & \qquad \quad [y_0,y_i]^p,\; [y_0,y_i,x],\; [y_0,y_i,y] \quad
    \text{for $1\le i\le \tfrac{p^k-1}{2}$} \rangle
  \end{split}
\end{equation}
for $k \in \mathbb{N}$, and 
\begin{equation}\label{equ:pres-for-G}
  G \cong \langle x,y \mid y^{p^2},\; [y^p,x];\; [y_0,y_i]^p,\;
  [y_0,y_i,x],\; [y_0,y_i,y] \quad \text{for $i \in \mathbb{N}$} \rangle
\end{equation}
is a presentation of $G$ as a pro-$p$ group.  Indeed,
\[
\widetilde{N} = \langle \{ y^{p^2}, [y^p,x] \} \cup \{ [y_0,y_i]^p ,
[y_0,y_i,x] , [y_0,y_i,y] \mid i \in \N \} \rangle^F
\trianglelefteq_\mathrm{c} F
\]
satisfies, for each $k\in \mathbb{N}$,
\[
N_k= \langle x^{p^{k+1}} , [x^{p^k},y] \rangle^F \, \widetilde{N},
\]
where $x^{p^{k+1}}, [x^{p^k},y] \to 1$ as $k \to \infty$.  This yields
$\widetilde{N} = \bigcap_{k \in \N} N_k = N$.
To facilitate later use,
we have underlined the two relations in \eqref{equ:pres-for-G-k} that
do not yet occur in~\eqref{equ:pres-for-G}.

To summarise and supplement some of the notation introduced above,
we define
\begin{align*}
  Y & = \langle y_i \mid i \in \mathbb{Z} \rangle R
      \trianglelefteq_\mathrm{c} F,%
  & H & = Y/N \trianglelefteq_\mathrm{c} G, %
  & Z & = R/N \trianglelefteq_\mathrm{c} G. \\
  \intertext{Similarly for $k \in \mathbb{N}$ we set}
  Y_k & = \langle y_i \mid i \in \mathbb{Z} \rangle R_k
        \trianglelefteq_\mathrm{o} F, %
  & H_k & = Y_k/N_k \trianglelefteq G_k, %
  & Z_k &= R_k/N_k \trianglelefteq G_k.
\end{align*}
Diagrammatically, we have:
\[
\left. 
\begin{array}{ccc}
  F & \twoheadrightarrow & G \\ 
  \vert && \vert \\
  Y & \twoheadrightarrow & H \\
  \vert && \vert \\
  R & \twoheadrightarrow & Z 
\end{array}
\right\} G/Z \cong W \qquad\qquad W_k \cong G_k/Z_k \left\{
\begin{array}{ccc}
  G_k   & \twoheadleftarrow & F  \\
  \vert && \vert \\
  H_k  & \twoheadleftarrow & Y_k \\
  \vert && \vert \\
  Z_k & \twoheadleftarrow & R_k
\end{array}
\right.
\]\vspace*{-.8cm}\[
\begin{array}{ccc}
  \phantom{R} && \phantom{Z} \\
  \vert && \vert \\
  N & \twoheadrightarrow & 1 
\end{array}
\qquad\qquad\qquad\qquad\qquad\qquad\qquad\qquad\;
\begin{array}{ccc}
  \phantom{Z_k} && \phantom{R_k} \\
  \vert && \vert \\
  1 & \twoheadleftarrow & N_k
\end{array}
\]

%

\begin{lemma} \label{lem:centre-of-G} The centre of $G$ is $Z(G) = Z$,
  and $Z_k \le Z(G_k)$ for $k \in \mathbb{N}$.
\end{lemma}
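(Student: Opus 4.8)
The statement has two parts: (1) $Z(G) = Z$, and (2) $Z_k \le Z(G_k)$ for all $k \in \mathbb{N}$. I would tackle (2) first, since it is essentially a finite computation with the explicit relators, and then pass to the limit — or argue (1) directly — using the description $G = \varprojlim G_k$ together with $Z = \bigcap_k$ (the preimages of) $Z_k$.

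\emph{Step 1: $Z_k \le Z(G_k)$.} Recall $Z_k = R_k/N_k$ and $N_k = [R_k,F]R_k^{\,p}$. So in $G_k = F/N_k$ the image of $R_k$ is central precisely because $[R_k, F] \subseteq N_k$ by construction; this is immediate and requires no computation. The only subtlety is to make sure $Z_k$ is well-defined as stated, but that is already built into the setup of Section~\ref{sec:presentation}. Likewise $Z = R/N$ with $N = [R,F]R^p$ gives $[R,F] \subseteq N$, so $Z \le Z(G)$ with no work.

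\emph{Step 2: $Z(G) \subseteq Z$.} This is the substantive direction. Since $G/Z \cong W$, it suffices to show that $W$ has trivial centre, i.e. $Z(G) \subseteq Z$ will follow once we know that the image $\bar{z} \in W$ of any central $z \in Z(G)$ is central in $W$ — but $W = C_p \mathrel{\hat{\wr}} \mathbb{Z}_p$ has trivial centre (the base group $B$ is self-centralising and $\langle x\rangle$ acts on $B = \prod_{i} \langle y^{x^i}\rangle$ with no nonzero fixed vectors on any finite quotient, by the uniseriality recorded in the proof of Lemma~\ref{lem:normal-subgroups-in-W}; alternatively $Z(W_k) = W_k^{p^k}$ is pushed to $1$ in the inverse limit). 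Hence $z \in Z$, i.e. $Z(G) \subseteq Z$. Combined with Step~1 this gives $Z(G) = Z$.

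\emph{Expected main obstacle.} The only place where care is genuinely needed is Step~2: one must verify that $Z(W) = 1$ cleanly in the profinite setting, rather than just $Z(W_k) \ne 1$. I would phrase this as: for $w \in Z(W) \setminus B$ we may (as in the proof of Lemma~\ref{lem:normal-subgroups-in-W}) reduce to $x \in Z(W)$, which is absurd since $[y,x]\ne 1$; and for $1 \ne w \in B \cap Z(W)$ we have $w \in B_j \setminus B_{j+1}$ for some $j$, but then $[w,x]$ generates $B_{j+1}$ modulo $B_{j+2}$ and in particular $[w,x] \ne 1$, a contradiction. This handles everything. An equivalent and perhaps slicker route is to note $Z(G) Z / Z = Z(G/Z)$ fails in general, so instead argue directly: pick $z \in Z(G)$; then $[z,x] \in N$ and $[z,y] \in N$; working in $G_k = F/N_k$ for each $k$, centrality of $z$ forces its image into $Z(G_k)$, and since $Z(G_k) \cap Y_k$ and the $x$-part are controlled by Proposition~\ref{pro:series-Wk} (the centre of $W_k$ being $W_k^{p^k} \le Z_k$, plus the elementary-abelian "defect" $N_k$-part which lies in $Z_k$ as well), one gets $z \in R_k N / N$ for all $k$, hence $z \in \bigcap_k R_k N/N = R/N = Z$. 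Either way, the mechanics are light; the point is simply to organise the reduction and invoke triviality of $Z(W)$.
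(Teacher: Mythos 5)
Your proposal is correct and follows essentially the same route as the paper: $Z \le Z(G)$ and $Z_k \le Z(G_k)$ hold by construction of $N = [R,F]R^p$ and $N_k = [R_k,F]R_k^{\,p}$, and the reverse inclusion $Z(G) \le Z$ follows from the triviality of $Z(W)$, which the paper reads off from the realisation $W \cong \langle 1+t \rangle \ltimes \mathbb{F}_p[\![t]\!]$ and which you verify via uniseriality. (Your hesitation about $Z(G)Z/Z = Z(G/Z)$ is unnecessary: only the inclusion $Z(G)Z/Z \subseteq Z(G/Z)$ is needed, and that always holds, so your first route already suffices.)
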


\begin{proof}
  By construction, $Z \le Z(G)$ and $Z_k \le Z(G_k)$ for
  $k \in \mathbb{N}$.  From \eqref{equ:W-explicit} we see that
  $G/Z \cong W$ has trivial centre.  Therefore $Z = Z(G)$.
\end{proof}

In fact, $Z_k \lneqq Z(G_k)$ for $k \in \mathbb{N}$; see
Lemma~\ref{lem:w-elements} below.


\section{General description of the normal Hausdorff spectrum of the
  pro-$p$ group $G$
  and its finite direct powers} \label{sec:general-description}

We continue to use the notation set up in
Section~\ref{sec:presentation} to study the pro-$p$ group~$G$ and its
finite direct powers.

\begin{proposition} \label{pro:normal-subgroups-in-G} Let
  $K \trianglelefteq_\mathrm{c} G$ be a closed
  normal subgroup such that $K \not \subseteq Z$.  Then either $K$ is
  open in $H$ or $K$ is open in~$G$; in particular, $K \cap H
  \le_\mathrm{o} H$.  Furthermore, $[K \cap H,G] \le_\mathrm{o} H$.
\end{proposition}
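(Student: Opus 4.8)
The plan is to exploit the central extension $1 \to Z \to G \to W \to 1$ and to push the analysis down to $W = G/Z$, where we already know the structure of closed normal subgroups by Lemma~\ref{lem:normal-subgroups-in-W}. Write $\bar{K} = KZ/Z \trianglelefteq_\mathrm{c} W$ for the image of $K$ in $W \cong G/Z$. Since $K \not\subseteq Z$, the image $\bar{K}$ is non-trivial, so Lemma~\ref{lem:normal-subgroups-in-W} applies: either $\bar{K}$ is open in $W$ or $\bar{K}$ is open in the base group $B$ of $W$, and in both cases $\bar{K} \cap B \le_\mathrm{o} B$ with $\lvert \bar{K} \cap B : [\bar{K}\cap B, W] \rvert = p$. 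Recall from Section~\ref{sec:presentation} that $H/Z = Y/R$ corresponds precisely to $B$ under $G/Z \cong W$, while $G/H \cong W/B \cong \mathbb{Z}_p$. So the dichotomy ``$\bar{K}$ open in $W$'' versus ``$\bar{K}$ open in $B$'' translates into ``$KZ$ open in $G$'' versus ``$KZ$ open in $H$'' (with $KZ \subseteq H$).

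Next I would remove the ambiguity introduced by $Z$ by arguing that $Z$ itself is ``small'' relative to $H$ in the right sense — but in fact $Z$ is infinite, so one cannot simply absorb it. Instead, the key point is that $Z = Z(G)$ is contained in $H$ (indeed $Z = R/N \subseteq Y/N = H$), so $KZ \subseteq H$ already forces $K \subseteq H$ in the second case; thus $K$ open in $H$ follows once we know $KZ$ is open in $H$ \emph{and} that $Z$ contributes nothing that $K$ doesn't already cover up to finite index. For the first case, $KZ$ open in $G$ combined with $Z \le_\mathrm{c} H$ of infinite index will not immediately give $K$ open in $G$; here I would instead argue directly, as in the proof of Lemma~\ref{lem:normal-subgroups-in-W}, that if $\bar{K}$ is open in $W$ then $K$ contains an element mapping to a power $x^{p^m}$ of $x$ together with a large chunk of $H$, and then use that $G/H \cong \mathbb{Z}_p$ has all non-trivial closed subgroups open to conclude $KH/H$ is open in $G/H$, whence $K$ is open in $G$ (using $K \cap H \le_\mathrm{o} H$, to be established next).

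To see $K \cap H \le_\mathrm{o} H$ in all cases: since $\bar{K} \cap B \le_\mathrm{o} B$ and $B \cong H/Z$, the preimage $(\bar{K}\cap B)$-part gives $(K\cap H)Z \le_\mathrm{o} H$. It remains to descend from $(K\cap H)Z$ to $K \cap H$; for this I would use that $Z$ is central and that $K \trianglelefteq_\mathrm{c} G$, together with a commutator argument: $[K \cap H, G]$ lands inside $K \cap [H,G]$, and by the explicit presentation \eqref{equ:pres-for-G} of $G$ the subgroup $[H,G]$ sits inside $H$ modulo controllable pieces, so $[(K\cap H)Z, G] = [K\cap H, G] \subseteq K \cap H$ shows $K \cap H$ is already normal in $G$ and differs from $(K\cap H)Z$ only inside the centre $Z$. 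A short argument — analogous to the ``$x^{p^k}z$'' normalisation in the proof of Lemma~\ref{lem:normal-subgroups-in-W}, or a direct count using that $Z/[(K\cap H), Z] $ is handled by the relations $[y_0,y_i,x], [y_0,y_i,y]$ — then yields $\lvert (K\cap H)Z : K\cap H\rvert < \infty$, hence $K \cap H \le_\mathrm{o} H$.

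Finally, for $[K \cap H, G] \le_\mathrm{o} H$: since $K \cap H \le_\mathrm{o} H$, its image $\overline{K \cap H}$ is open in $B$, so $[\overline{K\cap H}, W] = [\bar{K}\cap B, W]$ is open in $B$ by Lemma~\ref{lem:normal-subgroups-in-W} (the uniserial action of $\langle x\rangle$ on $B$ drops the index by exactly $p$). Lifting back, $[K\cap H, G]Z \le_\mathrm{o} H$, and the same centre-absorption argument as above — or simply the observation that $[K \cap H, G] \supseteq [K \cap H, H]$ and the relations defining $N$ force $[Y,Y] \subseteq R$, so $Z$ is generated by commutators that are visibly within bounded distance of $[K\cap H, G]$ — gives $[K \cap H, G] \le_\mathrm{o} H$.

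The main obstacle I anticipate is the centre-absorption step: because $Z$ is an infinite elementary abelian group, passing from a statement about $KZ$ (or $(K\cap H)Z$, or $[K\cap H,G]Z$) being open in $H$ to the corresponding statement without the $Z$ requires showing that $K$ (resp.\ $K\cap H$, $[K\cap H,G]$) already captures $Z$ up to finite index. This should follow from the fact that $K \trianglelefteq_\mathrm{c} G$ and the precise commutator relations in the presentation \eqref{equ:pres-for-G}: roughly, whenever $K$ contains ``enough'' of $H$ modulo $Z$, the normality of $K$ forces it to contain $[H,G] \pmod{\text{bounded}}$, and $[H,G]$ together with $p$-th powers spans a finite-index subgroup of $Z$. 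Making this precise — presumably via the finite quotients $G_k$ and a uniform bound independent of $k$ — is the real content of the proof.
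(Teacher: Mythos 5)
Your overall strategy coincides with the paper's: pass to $W = G/Z$, invoke Lemma~\ref{lem:normal-subgroups-in-W} to get $(K\cap H)Z = KZ\cap H \le_\mathrm{o} H$, and then reduce everything to what you call ``centre-absorption'', i.e.\ showing that $K\cap Z \le_\mathrm{o} Z$. The problem is that this last step is exactly where your argument stops being a proof: you defer it (``making this precise \dots is the real content of the proof''), and the route you sketch for it is partly confused. In particular, the quantity $Z/[(K\cap H),Z]$ cannot ``handle'' anything, because $Z = Z(G)$ is central, so $[K\cap H, Z]=1$ identically; likewise $[H,G]$ is the wrong object (it is open in $H$, not a subgroup of $Z$). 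The inclusion $[K\cap H,G]\subseteq K\cap H$ that you derive from normality is true but does not, by itself, bound $\lvert (K\cap H)Z : K\cap H\rvert$. Nor are the finite quotients $G_k$ and a ``uniform bound in $k$'' needed.

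The missing argument is short and direct. Since $Z$ is central of exponent~$p$ and $H/Z$ is elementary abelian, the map $h \mapsto h^p$ is a homomorphism $H \to Z$ factoring through $H/Z$, and $(h,h')\mapsto [h,h']$ is a bilinear map $H/Z \times H/Z \to Z$; moreover $Z = R/N$ is generated by the images of $y^p$ and the $[y_0,y_i]$, i.e.\ $Z = H^p\,[H,H]$. Now choose $\hat y_1,\hat y_2,\ldots \in H$ whose images form a basis of $H/Z$ with $\hat y_{m+1},\hat y_{m+2},\ldots$ generating $KZ\cap H$ modulo~$Z$ (possible since $KZ\cap H \le_\mathrm{o} H$). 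For $i>m$ write $\hat y_i = \kappa_i z_i$ with $\kappa_i \in K$, $z_i \in Z$; then $\hat y_i^{\,p} = \kappa_i^{\,p} \in K\cap Z$ and, for any $j$, $[\hat y_i,\hat y_j] = [\kappa_i,\hat y_j] \in K\cap Z$ by normality of~$K$. Hence $Z/(K\cap Z)$ is generated by the finitely many elements $\hat y_i^{\,p}$, $[\hat y_i,\hat y_j]$ with $i,j\le m$, so $K\cap Z \le_\mathrm{o} Z$. This is precisely the paper's argument. Once it is in place, your deductions of the dichotomy, of $K\cap H\le_\mathrm{o}H$, and of $[K\cap H,G]\le_\mathrm{o}H$ (the last by noting $[K\cap H,G]\not\subseteq Z$ via Lemma~\ref{lem:normal-subgroups-in-W} and applying the first part of the proposition to the closed normal subgroup $[K\cap H,G]$) all go through. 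As it stands, however, the decisive step is absent, so the proposal is not yet a proof.
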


\begin{proof}
  Lemma~\ref{lem:normal-subgroups-in-W} shows:
  $KZ \cap H \le_\mathrm{o} H$; hence it suffices to prove
  $K \cap Z \le_\mathrm{o} Z$.  Choose
  $\hat{y}_1, \hat{y}_2, \ldots \in H$, converging to $1$ modulo $Z$,
  and $m \in \mathbb{N}$ such that (the images of)
  $\hat{y}_1, \hat{y}_2, \ldots$ (modulo $Z$) yield a basis for the
  elementary abelian pro-$p$ group $H/Z$ and
  $\hat{y}_{m+1}, \hat{y}_{m+2}, \ldots$ generate $KZ \cap H$
  modulo~$Z$.

  Recall that $Z$ is central in $G$ and of exponent~$p$.  Thus
  $K \cap Z$ contains $\hat{y}_i^{\,p}$ and $[\hat{y}_i,\hat{y}_j]$
  for all $i, j \in \mathbb{N}$ with $i>m$.  Hence the finite set
  \[
 \{ \hat{y}_i^{\, p} \mid 1 \le i \le m \} \cup \{
  [\hat{y}_i,\hat{y}_j] \mid 1 \le i \le j \le m \}
  \]
  generates the elementary abelian group $Z$ modulo $K \cap Z$, and
  $K \cap Z \le_\mathrm{o} Z$.

  Finally, Lemma~\ref{lem:normal-subgroups-in-W} implies that
  $[K \cap H,G] \not \subseteq Z$.  Hence
  $[K \cap H,G] \le_\mathrm{o} H$.
\end{proof}

From Proposition~\ref{pro:normal-subgroups-in-G},
Lemma~\ref{lem:centre-of-G} and Lemmata~\ref{lem:interval}
and~\ref{lem:hdim-elem-ab} we deduce the general shape of the normal
Hausdorff spectrum of~$G$.

\begin{corollary} \label{cor:general-normal-hspec} Let $\mathcal{S}$
  be an arbitrary filtration series of~$G$.  Then the normal Hausdorff spectrum
  of $G$ has the form
  \[
 \hspec_\trianglelefteq^\mathcal{S}(G) = [0,\xi] \cup \{ \eta \} \cup
 \{ 1\},
  \]
  where $\xi = \hdim^{\mathcal{S}}_G(Z)$ and
  $\eta = \hdim^{\mathcal{S}}_G(H)$.
\end{corollary}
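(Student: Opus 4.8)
The plan is to prove the two inclusions separately: Proposition~\ref{pro:normal-subgroups-in-G} does the structural work on one side, while Lemma~\ref{lem:centre-of-G} (centrality of $Z$) together with Lemmata~\ref{lem:interval} and~\ref{lem:hdim-elem-ab} does the constructive work on the other.

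First I would show $\hspec_\trianglelefteq^{\mathcal{S}}(G) \subseteq [0,\xi] \cup \{\eta\} \cup \{1\}$. Let $K \trianglelefteq_\mathrm{c} G$. If $K \subseteq Z$, then monotonicity of Hausdorff dimension gives $\hdim_G^{\mathcal{S}}(K) \le \hdim_G^{\mathcal{S}}(Z) = \xi$, so the value lies in $[0,\xi]$. If $K \not\subseteq Z$, then by Proposition~\ref{pro:normal-subgroups-in-G} either $K \le_\mathrm{o} G$ or $K \le_\mathrm{o} H$. When $K \le_\mathrm{o} G$, the indices $\lvert KG_i : G_i\rvert$ and $\lvert G : G_i\rvert$ differ only by the constant factor $\lvert G : K\rvert$, whence $\hdim_G^{\mathcal{S}}(K) = 1$. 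When $K \le_\mathrm{o} H$, one has $\log_p\lvert HG_i : G_i\rvert - \log_p\lvert KG_i : G_i\rvert \le \log_p [H:K]$, a bound independent of~$i$; dividing by $\log_p\lvert G : G_i\rvert \to \infty$ and passing to lower limits yields $\hdim_G^{\mathcal{S}}(K) = \hdim_G^{\mathcal{S}}(H) = \eta$. This exhausts all cases.

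For the reverse inclusion, the value $1$ is realised by $K = G$ and the value $\eta$ by $K = H$, both closed normal subgroups of~$G$. For an arbitrary $\delta \in [0,\xi]$ I would exploit that $Z$ is a countably based infinite elementary abelian pro-$p$ group — it is abelian of exponent~$p$ since $N = [R,F]R^p$ — and that it is central in $G$ by Lemma~\ref{lem:centre-of-G}, so that \emph{every} closed subgroup of $Z$ is automatically normal in~$G$. If $\xi = 0$, take the trivial subgroup. Otherwise apply Lemma~\ref{lem:hdim-elem-ab} to $Z$ equipped with $\mathcal{S}\vert_Z$ and to the value $\delta/\xi \in [0,1]$, obtaining a closed subgroup $K \le_\mathrm{c} Z$ with strong Hausdorff dimension $\delta/\xi$ in $Z$ with respect to $\mathcal{S}\vert_Z$; since $Z$ is central, $K \trianglelefteq_\mathrm{c} G$. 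Then Lemma~\ref{lem:interval}, applied to $K \le_\mathrm{c} Z \le_\mathrm{c} G$, gives $\hdim_G^{\mathcal{S}}(K) = \hdim_G^{\mathcal{S}}(Z)\cdot\hdim_Z^{\mathcal{S}\vert_Z}(K) = \xi\cdot(\delta/\xi) = \delta$. Hence $[0,\xi] \cup \{\eta\} \cup \{1\} \subseteq \hspec_\trianglelefteq^{\mathcal{S}}(G)$, and the two inclusions together give the claim.

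I do not anticipate a serious obstacle: the classification of closed normal subgroups in Proposition~\ref{pro:normal-subgroups-in-G} carries the argument. The one point that requires care is that $Z$ is not claimed to have strong Hausdorff dimension in~$G$, so realising the entire interval $[0,\xi]$ must be routed through Lemma~\ref{lem:interval} (which needs only that $K$ have strong Hausdorff dimension \emph{inside} $Z$, as supplied by Lemma~\ref{lem:hdim-elem-ab}) rather than through a naive multiplicativity statement for $Z$ itself; likewise, the degenerate case $\xi = 0$ should be dispatched separately.
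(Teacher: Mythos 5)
Your proposal is correct and follows exactly the route the paper intends: the paper's proof of this corollary is a one-line deduction from Proposition~\ref{pro:normal-subgroups-in-G}, Lemma~\ref{lem:centre-of-G} and Lemmata~\ref{lem:interval} and~\ref{lem:hdim-elem-ab}, and your argument is precisely that deduction written out in full, including the correct observation that Lemma~\ref{lem:interval} only needs strong Hausdorff dimension of $K$ \emph{inside} $Z$ (supplied by Lemma~\ref{lem:hdim-elem-ab}) and the separate treatment of $\xi=0$.
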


More generally we obtain a description of the normal Hausdorff
spectrum of finite direct powers
$G^{(m)} = G \times \overset{m} \ldots \times G$ of~$G$, with respect
to suitable `product filtration series'.  For any filtration series
$\mathcal{S} \colon G=S_0\supseteq S_1 \ldots$ of $G$ we consider the
naturally induced product filtration series on $G^{(m)}$ given by
\[
\mathcal{S}^{(m)} \colon G^{(m)} = G \times \overset{m} \ldots \times
G \supseteq S_1\times \overset{m}{\ldots}\times S_1 \supseteq
S_2\times \overset{m}{\ldots} \times S_2\supseteq \ldots.
\]
For a standard filtration series
$\mathcal{S} \in \{\mathcal{P},\mathcal{L},\mathcal{F},\mathcal{D} \}$
on $G$ the product filtration series $\mathcal{S}^{(m)}$ is actually the
corresponding standard filtration series on~$G^{(m)}$.

\begin{corollary}\label{cor:powers-of-G}
  Let $m \in \mathbb{N}$, and let
  $K \trianglelefteq_\mathrm{c} G^{(m)}$.  For $1\le j\le m$, let
  $\pi_j \colon G^{(m)} \to G$ be the canonical projection onto the
  $j$th factor and set
  \[
  \overline{K}(j) =
  \begin{cases}
    Z & \text{if $K \pi_j \subseteq Z$,} \\
    G & \text{otherwise,}
  \end{cases}
  \quad \text{and} \quad
  \underline{K}(j) =
  \begin{cases}
    1 & \text{if $K \pi_j \subseteq Z$,} \\
    H & \text{otherwise.}
  \end{cases}
  \]
  Then $K \le \prod_{j = 1}^m \overline{K}(j)$ and $K$ contains an
  open normal subgroup of $\prod_{j = 1}^m \underline{K}(j)$.
\end{corollary}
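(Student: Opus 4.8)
The plan is to reduce the statement, factor by factor, to Proposition~\ref{pro:normal-subgroups-in-G} applied inside each copy of~$G$. First I would establish the upper bound $K \le \prod_{j=1}^m \overline{K}(j)$. Fix $j$. If $K\pi_j \subseteq Z$ then by definition $\overline{K}(j) = Z \supseteq K\pi_j$; if $K\pi_j \not\subseteq Z$ then $\overline{K}(j) = G \supseteq K\pi_j$ trivially. In either case $K\pi_j \subseteq \overline{K}(j)$ for every $j$, and since $K \le \prod_{j=1}^m K\pi_j$ holds for any subgroup of a direct product, this gives the containment $K \le \prod_{j=1}^m \overline{K}(j)$. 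This part is essentially formal.

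The substantive part is the lower bound: $K$ contains an open normal subgroup of $L := \prod_{j=1}^m \underline{K}(j)$. Write $S = \{ j \mid K\pi_j \not\subseteq Z \}$, so that $L = \prod_{j \in S} H^{(j)}$, where $H^{(j)}$ denotes the copy of~$H$ in the $j$th factor. For $j \in S$, the normal subgroup $K\pi_j \trianglelefteq_\mathrm{c} G$ satisfies $K\pi_j \not\subseteq Z$, so Proposition~\ref{pro:normal-subgroups-in-G} yields $K\pi_j \cap H \le_\mathrm{o} H$ and, crucially, $[K\pi_j \cap H, G] \le_\mathrm{o} H$. The idea is that the commutator $[K, G^{(m)}]$, which is contained in $K$ since $K \trianglelefteq G^{(m)}$, is large enough in each factor $j \in S$ to produce an open subgroup of $H^{(j)}$. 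Concretely, I would argue that $[K, G^{(m)}]$ projects, under $\pi_j$, onto a subgroup containing $[K\pi_j \cap H, G]$ — for this one picks, for a generating set of $K\pi_j \cap H$ arising as $j$th coordinates of elements $k \in K$, the commutators of those $k$ with elements of the $j$th factor of $G^{(m)}$; their $j$th coordinates are the required commutators in $G$, while the other coordinates can be controlled. This shows $[K,G^{(m)}]\pi_j \le_\mathrm{o} H$ for each $j\in S$.

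The remaining obstacle — and the main technical point — is passing from "$K$ is large in each coordinate separately" to "$K$ contains an open normal subgroup of $L$"; a priori $K$ could be a diagonal-type subgroup meeting each $H^{(j)}$ trivially. Here I would use that $Z \le Z(G)$ is central and of exponent~$p$, so that inside $L$ the relevant obstruction lives in $\prod_{j\in S} Z^{(j)}$, which is central in $\prod_{j\in S} G^{(j)}$. Since $K \cap H^{(j)}$ need not itself be open, but $[K\cap H^{(j)}\cdot(\text{stuff}),G]$ is, one shows that $K \cap L$ contains $\prod_{j \in S}[K\pi_j \cap H, G]$ up to a subgroup of the central elementary abelian part $\prod_{j\in S} Z^{(j)}$; but $[K\pi_j\cap H,G] \le_\mathrm{o} H$, so $\prod_{j\in S}[K\pi_j\cap H,G]$ is already open in $L$. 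Intersecting the (finitely many) open conditions, $K$ contains an open subgroup of~$L$, and since $K$ and $L$ are both normal in $G^{(m)}$, one may pass to a possibly smaller open subgroup of $L$ that is normal in $G^{(m)}$ (e.g. the core), completing the argument. The case $S = \emptyset$ is vacuous since then $L = 1$.
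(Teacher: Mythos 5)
Your proposal is correct and follows essentially the same route as the paper: the upper bound is the formal containment $K \le \prod_j K\pi_j$, and the lower bound rests on $[K,G^{(m)}] \le K$ together with Proposition~\ref{pro:normal-subgroups-in-G}, which gives $[K\pi_j \cap H, G] \le_\mathrm{o} H$ whenever $K\pi_j \not\subseteq Z$. The one place where you make things harder than necessary is your third paragraph: the worry about diagonal-type subgroups, and the ensuing detour through the central elementary abelian part $\prod_{j\in S} Z^{(j)}$, dissolves completely once you push your own observation in paragraph two to its conclusion. A commutator $[k,(1,\ldots,g_j,\ldots,1)]$ is supported \emph{exactly} in coordinate $j$ (the other coordinates are not merely ``controlled'' --- they are trivial), so in fact
\[
[K,G^{(m)}] = [K\pi_1,G]\times\cdots\times[K\pi_m,G] \le K,
\]
and for $j \in S$ each factor $[K\pi_j,G] \supseteq [K\pi_j\cap H,G]$ is an open normal subgroup of $H$ by Proposition~\ref{pro:normal-subgroups-in-G} (and is contained in $H$ since $G/H$ is abelian). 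Hence $K$ directly contains $\prod_{j\in S}[K\pi_j,G]$, which is already an open subgroup of $L=\prod_{j\in S}H^{(j)}$ and normal in $G^{(m)}$; no passage to a core and no bookkeeping modulo the centre is needed. This is precisely the paper's proof, stated in one line.
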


\begin{proof}
  Observe that
  \[
  [K\pi_1,G] \times \ldots \times [K\pi_m,G] = [K,G^{(m)}] \le K \le
  K\pi_1 \times \ldots \times K\pi_m.
  \]
  Thus $K$ is contained in $\prod_{j=1}^m \overline{K}(j)$, and it
  suffices to show that $[K\pi_j \cap H,G] \le_\mathrm{o} H$ for each
  $j$ with $K\pi_j \not \subseteq Z$.  This follows by
  Proposition~\ref{pro:normal-subgroups-in-G}.
\end{proof}

\begin{corollary}\label{cor:direct-product-spectrum}
  Let $m \in \mathbb{N}$, and let $\mathcal{S}$ be a filtration series
  of $G$ such that $\hdim^{\mathcal{S}}_G(H)=1$.  Then the normal
  Hausdorff spectrum of $G^{(m)}$ has the form 
  \[
  \hspec^{\mathcal{S}^{(m)}}_{\trianglelefteq}(G^{(m)}) = [0,\xi] \cup
  \bigcup_{1 \le l \le m-1}\big[\nicefrac{l}{m},\nicefrac{l +
  (m-l)\xi}{m} \big] \cup\{1\},
  \]
  where $\xi = \hdim^{\mathcal{S}}_G(Z)$.
\end{corollary}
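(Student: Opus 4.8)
The plan is to deduce both inclusions directly from the structural statement of Corollary~\ref{cor:powers-of-G}, combined with elementary $\varliminf$-computations for the product filtration series. Write $\mathcal{S}\colon G = S_0 \supseteq S_1 \supseteq \ldots$, so that for every $r \in \mathbb{N}$ the series $\mathcal{S}^{(r)}$ has terms $S_i^{(r)} = S_i \times \cdots \times S_i$ ($r$ copies) and $\log_p \lvert G^{(r)} : S_i^{(r)} \rvert = r \log_p \lvert G : S_i \rvert$. I will use freely that $Z = Z(G)$ by Lemma~\ref{lem:centre-of-G}, so that $Z(G^{(r)}) = Z^{(r)}$ and every closed subgroup of $Z^{(r)}$ is normal in $G^{(r)}$, and that $Z$ is elementary abelian (cf.\ the proof of Proposition~\ref{pro:normal-subgroups-in-G}), whence $Z^{(r)} \cong C_p^{\,\aleph_0}$. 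Note that the hypothesis $\hdim^{\mathcal{S}}_G(H) = 1$ automatically makes $H$ of strong Hausdorff dimension $1$ in $G$, since the approximating ratios all lie in $[0,1]$.

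For the inclusion ``$\subseteq$'', let $K \trianglelefteq_{\mathrm{c}} G^{(m)}$ and let $l$ be the number of indices $j$ with $K\pi_j \not\subseteq Z$; permuting direct factors, which is a symmetry of $(G^{(m)},\mathcal{S}^{(m)})$, I may assume these are $j = 1, \ldots, l$. By Corollary~\ref{cor:powers-of-G}, $K$ contains an open normal subgroup of $P := H^{(l)} \times \{1\}^{m-l}$ (that is, $H$ in the first $l$ coordinates, trivial elsewhere) and $K \le Q := G^{(l)} \times Z^{(m-l)}$. Since an open subgroup of a closed subgroup $M$ has the same Hausdorff dimension in $G^{(m)}$ as $M$, and Hausdorff dimension is monotone under inclusion, this squeezes $\hdim^{\mathcal{S}^{(m)}}_{G^{(m)}}(K)$ between $\hdim^{\mathcal{S}^{(m)}}_{G^{(m)}}(P)$ and $\hdim^{\mathcal{S}^{(m)}}_{G^{(m)}}(Q)$. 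A direct computation using $\log_p \lvert G^{(m)} : S_i^{(m)} \rvert = m \log_p \lvert G : S_i \rvert$ gives $\hdim^{\mathcal{S}^{(m)}}_{G^{(m)}}(P) = \varliminf_{i \to \infty} \frac{l \log_p \lvert H S_i : S_i \rvert}{m \log_p \lvert G : S_i \rvert} = \nicefrac{l}{m}$ (using $\hdim^{\mathcal{S}}_G(H) = 1$) and $\hdim^{\mathcal{S}^{(m)}}_{G^{(m)}}(Q) = \nicefrac{l}{m} + \tfrac{m-l}{m} \varliminf_{i \to \infty} \frac{\log_p \lvert Z S_i : S_i \rvert}{\log_p \lvert G : S_i \rvert} = \nicefrac{(l + (m-l)\xi)}{m}$, the constant $\nicefrac{l}{m}$ pulling out of the lower limit in both cases. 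For $l = 0$ this forces $\hdim^{\mathcal{S}^{(m)}}_{G^{(m)}}(K) \in [0,\xi]$, for $l = m$ it forces the value $1$, and for $1 \le l \le m-1$ it forces a value in $[\nicefrac{l}{m}, \nicefrac{(l+(m-l)\xi)}{m}]$, which is exactly the asserted inclusion.

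For the reverse inclusion ``$\supseteq$'' I would exhibit explicit closed normal subgroups. Applying Lemma~\ref{lem:hdim-elem-ab} to $Z^{(r)} \cong C_p^{\,\aleph_0}$ with the induced filtration, and then Lemma~\ref{lem:interval} with $Z^{(r)}$ as the intermediate group (noting $\hdim^{\mathcal{S}^{(r)}}_{G^{(r)}}(Z^{(r)}) = \xi$), yields for every $\eta \in [0,1]$ a subgroup $L \le_{\mathrm{c}} Z^{(r)}$ with $\hdim^{\mathcal{S}^{(r)}}_{G^{(r)}}(L) = \xi\eta$. Taking $r = m$, each such $L \le Z^{(m)} = Z(G^{(m)})$ is normal in $G^{(m)}$ and yields a value ranging over all of $[0,\xi]$. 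For $1 \le l \le m-1$, take $r = m-l$ and put $K = G^{(l)} \times L \trianglelefteq G^{(m)}$; the same bookkeeping as above gives $\hdim^{\mathcal{S}^{(m)}}_{G^{(m)}}(K) = \nicefrac{l}{m} + \tfrac{m-l}{m}\, \hdim^{\mathcal{S}^{(m-l)}}_{G^{(m-l)}}(L) = \nicefrac{(l + (m-l)\xi\eta)}{m}$, which sweeps out $[\nicefrac{l}{m}, \nicefrac{(l+(m-l)\xi)}{m}]$ as $\eta$ runs over $[0,1]$. Finally $K = G^{(m)}$ contributes the value $1$, and the two inclusions together prove the corollary.

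The main obstacle is not any single hard step but the discipline of carrying the normalising factor $\log_p \lvert G^{(r)} : S_i^{(r)} \rvert = r \log_p \lvert G : S_i \rvert$ correctly through every estimate and verifying that the relevant identities survive with $\varliminf$ in place of $\lim$. This matters because the hypothesis of the corollary is placed on $H$ alone and \emph{not} on $Z$: in our applications $\xi \in \{\nicefrac{1}{3}, \nicefrac{1}{1+p}, \nicefrac{1}{5}\}$ and $Z$ need not have strong Hausdorff dimension with respect to every standard series, so one cannot afford to invoke strong dimensionality of $Z^{(r)}$ anywhere. The remaining care is bookkeeping: using the coordinate-permutation symmetry to reduce to the ordered case in the forward direction, and invoking $Z = Z(G)$ to guarantee that the subgroups constructed inside $Z^{(r)}$ are automatically normal in $G^{(r)}$.
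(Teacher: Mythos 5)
Your proposal is correct and follows essentially the same route as the paper: the upper and lower bounds come from squeezing $K$ between $\prod_j \underline{K}(j)$ and $\prod_j \overline{K}(j)$ via Corollary~\ref{cor:powers-of-G}, and the realisation of all intermediate values uses normal subgroups of the form $G^{(l)} \times (\text{subgroup of } Z^{(m-l)})$ together with Lemmata~\ref{lem:hdim-elem-ab} and~\ref{lem:interval}. The only (immaterial) difference is that the paper takes $U \times \cdots \times U$ for a single $U \le_{\mathrm{c}} Z$ supplied by Corollary~\ref{cor:general-normal-hspec}, whereas you take a general closed subgroup of $Z^{(m-l)}$; your explicit bookkeeping of the normalising factor and of the open-subgroup reduction is a faithful expansion of what the paper leaves implicit.
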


\begin{proof}
 First let $K \trianglelefteq_\mathrm{c} G^{(m)}$, and define
 $\overline{K}(j), \underline{K}(j)$ for $1 \le j \le m$ as in
 Corollary~\ref{cor:powers-of-G}.  From $\hdim^{\mathcal{S}}_G(H)=1$
 we deduce that
 \begin{multline*}
   \nicefrac{l}{m} = \hdim^{\mathcal{S}^{(m)}}_{G^{(m)}} \big(
   \prod\nolimits_{j=1}^m \underline{K}(j) \big) \le
   \hdim^{\mathcal{S}^{(m)}}_{G^{(m)}}(K) \\ \le
   \hdim^{\mathcal{S}^{(m)}}_{G^{(m)}} \big( \prod\nolimits_{j=1}^m
   \overline{K}(j) \big) = \nicefrac{l}{m} + \nicefrac{m-l}{m} \, \xi,
 \end{multline*}
 where
 $l = \# \{ j \mid 1 \le j \le m \text{ and } \overline{K}(j) = G \}$.

 Conversely, for every $l \in \{0,1,\ldots,m\}$ and
 $\beta \in \big[\nicefrac{l}{m},\nicefrac{l + (m-l)\xi}{m} \big]$
 there is a normal subgroup
 \[
 K_\beta = G \times \overset{l} \ldots \times G \times U \times
 \overset{m-l} \ldots \times U \trianglelefteq_\mathrm{c} G^{(m)},
 \]
 where $U \le_\mathrm{c} Z$ for $l < m$ has
 $\hdim^\mathcal{S}_G(U) = \nicefrac{m}{m-l} \, (\beta -
 \nicefrac{l}{m}) \in [0,\xi]$;
 compare Corollary~\ref{cor:general-normal-hspec}.  This yields
 $\beta = \hdim^{\mathcal{S}^{(m)}}_{G^{(m)}}(K_\beta) \in
 \hspec^{\mathcal{S}^{(m)}}_\trianglelefteq(G^{(m)}).$
\end{proof}

Corollary~\ref{cor:direct-product-spectrum} shows that, once
$\hdim^\mathcal{S}_G(H) = 1$, the general shape (e.g.\ the number of
connected components) of the normal Hausdorff spectrum
$\hspec^{\mathcal{S}^{(m)}}_\trianglelefteq(G^{(m)})$ depends only on
the parameters $\xi = \hdim^{\mathcal{S}}_G(Z)$ and
$m \in \mathbb{N}$.  For instance, if $\xi < \nicefrac{1}{m}$, then
$\hspec^{\mathcal{S}^{(m)}}_\trianglelefteq(G^{(m)})$ is the union of
$m+1$ disjoint intervals, whereas for $\xi \ge \nicefrac{1}{2}$ we
obtain
$\hspec^{\mathcal{S}^{(m)}}_\trianglelefteq(G^{(m)}) =
[0,1-\nicefrac{(1-\xi)}{m}] \cup \{1\}$.

The proof of Theorem~\ref{thm:main-thm} in
Sections~\ref{sec:p-power-series} and \ref{sec:other-series} will give
$\hdim^\mathcal{S}_G(H) = 1$ for the standard filtrations
$\mathcal{S} \in \{ \mathcal{P}, \mathcal{D}, \mathcal{F} \}$ and
$\xi = \hdim^\mathcal{P}_G(Z) = \hdim^\mathcal{D}_G(Z)
=\nicefrac{1}{3}$
respectively $\xi = \hdim^\mathcal{F}_G(Z) = \nicefrac{1}{p+1}$; the
assertion for $H$ is already a consequence of
\cite[Prop.~4.2]{KlThZu19}.  We formulate a taylor-made corollary for
these situations.

\begin{corollary} \label{cor:mult-intervals}
  Let $m,n \in \mathbb{N}$ with $m\ge \max\{2, n-1\}$ and $n \ge 2$.
  Let $\mathcal{S}$ be a filtration series of $G$ such that
  $\hdim^{\mathcal{S}}_G(H)=1$ and
  $\hdim^{\mathcal{S}}_G(Z) = \nicefrac{1}{n}$.  Then
  \[
  \hspec^{\mathcal{S}^{(m)}}_{\trianglelefteq}( G^{(m)}) = \big[
  0,\tfrac{mn-(n-1)^2}{mn} \big] \cup \bigcup\limits_{m-n+2 \le l \le m-1}
  \big[ \tfrac{l}{m}, \tfrac{m +l(n-1)}{mn} \big] \cup \{1\}
  \]
  consists of $n$ disjoint intervals.
\end{corollary}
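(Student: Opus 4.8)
The plan is to deduce the corollary from Corollary~\ref{cor:direct-product-spectrum} by a purely arithmetic analysis of a finite family of intervals. Substituting $\xi = \nicefrac{1}{n}$ in that corollary, and noting that $[0,\xi] = \big[0,\tfrac{1}{n}\big]$ is the $l=0$ member of the family, the normal Hausdorff spectrum of $G^{(m)}$ becomes
\[
\hspec^{\mathcal{S}^{(m)}}_{\trianglelefteq}(G^{(m)}) = \Big( \bigcup_{l=0}^{m-1} I_l \Big) \cup \{1\}, \qquad \text{where } I_l = \big[ \tfrac{l}{m},\, \tfrac{m+l(n-1)}{mn} \big].
\]
The whole task is thus to determine which of the $I_l$ coalesce and to confirm that $\{1\}$ remains isolated.

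First I would settle the merging pattern. Writing $I_l = [a_l,b_l]$ with $a_l = \tfrac{l}{m}$ and $b_l = \tfrac{m+l(n-1)}{mn}$, both endpoints are strictly increasing in $l$, so for $l < l'$ the intervals $I_l$ and $I_{l'}$ overlap or abut (equivalently, $I_l\cup I_{l'}$ is an interval) exactly when $b_l \ge a_{l'}$, i.e.\ when $m \ge n(l'-l) + l$. For consecutive indices this reads $l \le m-n$, and for $l \ge m-n+1$ one has the strict inequality $b_l < a_{l+1}$. Consequently $I_0,\ldots,I_{m-n+1}$ form a connected chain and coalesce into the single interval $[0,b_{m-n+1}]$; a short simplification gives $b_{m-n+1} = \tfrac{m+(m-n+1)(n-1)}{mn} = \tfrac{mn-(n-1)^2}{mn}$, and here the hypothesis $m\ge n-1$ is exactly what keeps the index $m-n+1$ non-negative (when $m=n-1$ this leading block degenerates harmlessly to $I_0$, consistently with $\tfrac{mn-(n-1)^2}{mn}=\tfrac1n$ in that case). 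Next, for $m-n+1 \le l < l'$ the overlap condition fails strictly, since $n(l'-l)+l \ge n + (m-n+1) = m+1 > m$; hence the leading interval $[0,b_{m-n+1}]$ together with $I_{m-n+2},\ldots,I_{m-1}$ are pairwise disjoint. Finally, $b_{m-1} = \tfrac{mn-(n-1)}{mn} = 1 - \tfrac{n-1}{mn} < 1$ because $n\ge 2$, so $\{1\}$ is an isolated point. Counting — one leading interval $\big[0,\tfrac{mn-(n-1)^2}{mn}\big]$, the $n-2$ intervals $I_{m-n+2},\ldots,I_{m-1}$, and $\{1\}$ — gives exactly $n$ pairwise disjoint intervals, which matches the asserted expression.

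I do not expect a genuine obstacle here: all the group theory is already packaged in Corollary~\ref{cor:direct-product-spectrum}, and what remains is elementary interval bookkeeping. The only points needing real care are the roles of the hypotheses — $m\ge n-1$ for the index range of the leading block to be meaningful, $n\ge 2$ to keep $\{1\}$ off the upper end of $I_{m-1}$ — together with the boundary cases $n=2$ (empty middle family, exactly two components) and $m=n-1$ (degenerate leading interval), which I would verify directly against the stated formula rather than absorb into the generic count. I would also note in passing that each middle interval $I_l$ with $m-n+2\le l\le m-1$ is non-degenerate, since $a_l\le b_l$ reduces to $l\le m$.
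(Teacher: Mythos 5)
Your proposal is correct and follows essentially the same route as the paper: both deduce the statement from Corollary~\ref{cor:direct-product-spectrum} with $\xi = \nicefrac{1}{n}$ and then verify by elementary endpoint comparisons that consecutive intervals merge precisely for $l \le m-n$ and separate for $l \ge m-n+1$, with $\{1\}$ isolated since $b_{m-1} = 1 - \nicefrac{(n-1)}{mn} < 1$. Your treatment is, if anything, slightly more careful than the paper's in spelling out the degenerate case $m = n-1$ and the pairwise (not just consecutive) disjointness of the trailing intervals.
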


\begin{proof}
  From Corollary~\ref{cor:direct-product-spectrum}, we have
  \[
  \hspec^{\mathcal{S}^{(m)}}_{\trianglelefteq}( G^{(m)}) %
  = \big[ 0,\nicefrac{1}{n} \big] \cup \bigcup_{1 \le l \le m-1} \big[
  \nicefrac{l}{m}, \nicefrac{m+l(n-1)}{mn} \big] \cup \{1\}.
  \]

  For $m-n+1 \le l \le m-1$ it is easy to verify that
  \[
  \frac{m+l(n-1)}{mn} < \frac{l+1}{m}.
  \]
  Hence it suffices to show that
  \[
  \big[ 0,\nicefrac{1}{n} \big] \cup \bigcup_{1 \le l \le m-n+1} \big[
  \nicefrac{l}{m}, \nicefrac{m+l(n-1)}{mn} \big] = \big[
  0,\tfrac{mn-(n-1)^2}{mn} \big].
 \]
 For $m = n-1$ this reduces to
 $\big[ 0,\nicefrac{1}{n} \big] = \big[ 0,\tfrac{mn-(n-1)^2}{mn}
 \big]$.  Now suppose that $m \ge n$.  Then the claim follows from
 \[
 \nicefrac{1}{m} \le \nicefrac{1}{n} \qquad \text{and} \qquad
\nicefrac{l+1}{m} \le \nicefrac{m+l(n-1)}{mn} \quad \text{for
   $1 \le l \le m-n$.} \qedhere
 \] 
\end{proof}


\section{The normal Hausdorff spectrum of $G$ with respect to\\ the
  $p$-power series} \label{sec:p-power-series}

We continue to use the notation set up in
Section~\ref{sec:presentation} and establish that
$\xi = \hdim^{\mathcal{P}}_G(Z) = \nicefrac{1}{3}$ and
$\eta = \hdim^{\mathcal{P}}_G(H) = 1$, with respect to the $p$-power
series~$\mathcal{P}$.  In view of
Corollary~\ref{cor:general-normal-hspec} this proves
Theorem~\ref{thm:main-thm} for the $p$-power series.  Indeed,
$\hdim^{\mathcal{P}}_G(H) = 1$ is already a consequence of
\cite[Prop.~4.2]{KlThZu19}.  It remains to show that
\begin{equation} \label{equ:xi-1/3-P} \hdim^{\mathcal{P}}_G(Z) =
  \varliminf_{i\to \infty} \frac{\log_p \lvert ZG^{p^i} : G^{p^i}
    \rvert}{\log_p \lvert G : G^{p^i} \rvert} = \nicefrac{1}{3}.
\end{equation}

It is convenient to work with the finite quotients $G_k$,
$k \in \mathbb{N}$, introduced in Section~\ref{sec:presentation}.  Let
$k \in \mathbb{N}$.  From \eqref{equ:pres-for-G-k} and
\eqref{equ:pres-for-G} we observe that
\[ 
\lvert G : G^{p^k} \rvert = \lvert G_k : G_k^{\, p^k} \rvert.
\]
Heuristically, $G_k^{\, p^k}$ is almost trivial (see
Proposition~\ref{pro:index-Gk-Gk-hoch-p}) and the elementary abelian
$p$-group $Z_k$ requires roughly half the number of generators
compared to the elementary abelian $p$-group $H_k/Z_k$.  This suggests
that~\eqref{equ:pres-for-G} should be true. We now work out the
details.

First we compute the order of~$G_k$, using the notation from
Section~\ref{sec:presentation}.

\begin{lemma}\label{lem:order-Gk}
  The logarithmic order of $G_k$ is
  \[
  \log_p \lvert G_k \rvert = \tfrac{1}{2}(3p^k+2k+3).
  \]
  In particular,  
 \begin{equation*}
   Z_k = R_k/N_k = \langle \{ x^{p^k}, y^p \} \cup \{ [y_0,y_i] \mid 1 \le i \le
    \tfrac{p^k-1}{2} \} \rangle N_k / N_k \cong C_p \times
   \overset{\frac{p^k+3}{2}}{\ldots} \times C_p. 
 \end{equation*}
\end{lemma}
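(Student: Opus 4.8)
The plan is to reduce both assertions to the single statement $\dim_{\mathbb{F}_p} Z_k = \tfrac{1}{2}(p^k+3)$. Since $N_k=[R_k,F]R_k^{\,p}$ contains $R_k^{\,p}[R_k,R_k]$, the section $Z_k=R_k/N_k$ is elementary abelian, and as $[R_k,F]\subseteq N_k$ the group $F$ acts trivially on it, so $Z_k$ is central in $G_k=F/N_k$; moreover $G_k/Z_k=F/R_k\cong W_k\cong C_p\wr C_{p^k}$ has order $p^{p^k+k}$. Hence $\log_p\lvert G_k\rvert=(p^k+k)+\dim_{\mathbb{F}_p} Z_k$, and once $\dim_{\mathbb{F}_p} Z_k=\tfrac12(p^k+3)$ is known the order formula and the isomorphism type of $Z_k$ both follow. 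The upper bound is immediate from \eqref{equ:R_k}: the closed normal subgroup $R_k$ of $F$ is generated, as such, by the $\tfrac12(p^k+3)$ elements $x^{p^k}$, $y^p$ and $[y_0,y_i]$ for $1\le i\le\tfrac12(p^k-1)$, so their images span the central elementary abelian group $Z_k$ and $\dim_{\mathbb{F}_p} Z_k\le\tfrac12(p^k+3)$.

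For the reverse inequality I would detect each of those $\tfrac12(p^k+3)$ generators by an explicit epimorphism out of $G_k$; the quotient map $G_k\twoheadrightarrow W_k$ is of no use here since it kills $Z_k$. The epimorphism $G_k\twoheadrightarrow C_{p^{k+1}}\times C_{p^2}$ sending $x$ and $y$ to the two cyclic generators respects all the relators of \eqref{equ:pres-for-G-k} and is non-trivial on $x^{p^k}$ and on $y^p$ but trivial on each $[y_0,y_i]$. To handle a fixed $[y_0,y_i]$ with $1\le i\le\tfrac12(p^k-1)$, let $\varepsilon_i\colon\mathbb{Z}/p^k\mathbb{Z}\to\{0,\pm1\}$ be the function with $\varepsilon_i(i)=1$, $\varepsilon_i(-i)=-1$ and $\varepsilon_i(d)=0$ otherwise, and let $M_i$ be the $p$-group of exponent $p$ and class at most $2$ on generators $m_0,\dots,m_{p^k-1}$ together with a central element $c$ of order $p$, subject to $[m_a,m_b]=c^{\varepsilon_i(b-a)}$ (indices read modulo $p^k$). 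Because $p^k$ is odd one has $i\not\equiv -i\pmod{p^k}$, so $\varepsilon_i$ is obtained from a well-defined alternating bilinear form $\beta_i$ on $\mathbb{F}_p^{\,p^k}$ via $\beta_i(e_a,e_b)=\varepsilon_i(b-a)$; hence $\lvert M_i\rvert=p^{\,p^k+1}$, and $M_i$ genuinely has exponent $p$ (here $p\mid\binom p2$ is used). The cyclic shift $m_a\mapsto m_{a+1}$, $c\mapsto c$ is an automorphism of $M_i$ whose $p^k$-th power is the identity, so $P_i=C_{p^k}\ltimes M_i$ is defined, and one checks that $x\mapsto$ (a generator of $C_{p^k}$) and $y\mapsto m_0$ respects every relator of \eqref{equ:pres-for-G-k}, giving an epimorphism $G_k\twoheadrightarrow P_i$. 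Under it $[y_0,y_i]\mapsto[m_0,m_i]=c\ne 1$, while $x^{p^k}$, $y^p$ and every $[y_0,y_{i'}]$ with $i'\ne i$ in the range $1\le i'\le\tfrac12(p^k-1)$ map to $1$ — for the last point one uses that $0<i,i'<\tfrac12 p^k$ forces $i'\not\equiv\pm i\pmod{p^k}$. Combining these epimorphisms shows that the $\tfrac12(p^k+3)$ listed generators are $\mathbb{F}_p$-linearly independent in $Z_k$, so equality holds in the bound above.

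I expect the only genuine work to be the construction and checking of the groups $P_i$: verifying that $\varepsilon_i$ really underlies an alternating bilinear pairing (so that $M_i$ has order $p^{p^k+1}$ and exponent $p$), that the cyclic shift induces an honest $C_{p^k}$-action on $M_i$, and — most importantly — that the two relators $x^{p^{k+1}}$ and $[x^{p^k},y]$ underlined in \eqref{equ:pres-for-G-k}, as well as the relators already present in \eqref{equ:pres-for-G}, all vanish in $P_i$. These are routine commutator computations; the two facts really doing the work are the shift-invariance of $\varepsilon_i$ and the parity statement $i\not\equiv -i\pmod{p^k}$, the latter being exactly where the hypothesis that $p$ is odd enters.
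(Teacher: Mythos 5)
Your argument is correct, and it reaches the count $\dim_{\mathbb{F}_p} Z_k = \tfrac{1}{2}(p^k+3)$ by a genuinely different route from the paper. The paper also reduces everything to showing that the $\tfrac{1}{2}(p^k+3)$ listed images span $Z_k$ independently, but it does so by building a \emph{single} quotient of maximal order: starting from the relatively free group $M = E/[\Phi(E),E]\Phi(E)^p$ on $p^k$ generators, letting $X = \langle \widetilde{x}\rangle \cong C_{p^{k+1}}$ permute the generators cyclically, passing to $\widetilde{M} = M/[\Phi(M),X]$, and checking that $\widetilde{G}_k = X \ltimes \widetilde{M}$ satisfies all relations of \eqref{equ:pres-for-G-k} and has logarithmic order $p^k + k + 2 + \tfrac{1}{2}(p^k-1)$, whence $G_k \cong \widetilde{G}_k$. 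You instead separate the generators by a family of small quotients: $C_{p^{k+1}} \times C_{p^2}$ to detect $x^{p^k}$ and $y^p$, and for each $i$ a group $P_i = C_{p^k} \ltimes M_i$ built from the shift-invariant alternating form $\beta_i$ to detect $[y_0,y_i]$ alone. Amusingly, your method is the one the paper itself uses one level down, when it verifies independence inside $\Phi(M)$ via maps onto $C_p^{\,p^k-1} \times C_{p^2}$ and $C_p^{\,p^k-2} \times \mathrm{Heis}(\mathbb{F}_p)$. The trade-off: the paper's construction yields an explicit model $G_k \cong X \ltimes \widetilde{M}$ in one stroke, while your separating quotients are individually smaller and more elementary to verify, at the cost of $\tfrac{1}{2}(p^k+1)$ separate (if uniform) relator checks; both correctly isolate the two points where $p$ odd enters, namely $i \not\equiv -i \pmod{p^k}$ and $p \mid \binom{p}{2}$.
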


\begin{proof}
  Observe from $F/R_k \cong W_k \cong C_p \mathrel{\wr} C_{p^k}$ that
  \[
  \log_p \lvert G_k \rvert =\log_p \lvert F : R_k \rvert + \log_p
  \lvert R_k : N_k \rvert = k+ p^k + \log_p \lvert R_k : N_k \rvert.
  \]
  By construction, $R_k/N_k$ is elementary abelian of exponent~$p$.
  Moreover, \eqref{equ:R_k} shows that
  $\{ x^{p^k}, y^p \} \cup \{ [y_0,y_i] \mid 1 \le i \le
  (p^k-1)/2 \}$
  generates $R_k$ modulo $N_k$.  In order to prove that the generators
  are independent, we construct a factor group $\widetilde{G}_k$ of
  $G_k$ that has the maximal possible logarithmic order
  $\log_p \lvert \widetilde{G}_k \rvert = p^k+k+2+(p^k-1)/2$.

  Consider the finite $p$-group
  \[
  M = \langle \widetilde{y}_0, \widetilde{y}_1, \ldots ,
  \widetilde{y}_{p^k-1} \rangle = E/[\Phi(E),E]\Phi(E)^p,
  \]
  where $E$ is a free pro-$p$ group on $p^k$ generators with Frattini
  subgroup $\Phi(E) = [E,E] E^p$.  Then the images of
  $\widetilde{y}_0, \ldots , \widetilde{y}_{p^k-1}$ generate
  independently the elementary abelian quotient $M/\Phi(M)$ and the
  commutators $[\widetilde{y}_i, \widetilde{y}_j]$, for
  $0 \le i < j \le p^k-1$, together with the $p$th powers
  $\widetilde{y}_0^{\; p}, \ldots, \widetilde{y}_{p^k-1}^{\; p}$
  generate independently the elementary abelian group~$\Phi(M)$.  The
  latter can be checked by considering homomorphisms from $M$ onto
  groups of the form $C_p^{\, p^k-1} \times C_{p^2}$ and
  $C_p^{\, p^k -2} \times \mathrm{Heis}(\mathbb{F}_p)$, where
  $\mathrm{Heis}(\mathbb{F}_p)$ denotes the group of upper
  unitriangular $3 \times 3$ matrices over $\mathbb{F}_p$.  Next
  consider the action of the cyclic group
  $X = \langle \widetilde{x} \rangle \cong C_{p^{k+1}}$, with kernel
  $\langle \widetilde{x}^{p^k} \rangle \cong C_p$, on $M$ that is
  induced by
  \begin{align*}
    \widetilde{y}_i^{\; \widetilde{x}} =
    \begin{cases}
      \widetilde{y}_{i+1} & \text{if $0 \le i \le p^k-2$,} \\
      \widetilde{y}_0 & \text{if $i = p^k-1$.}
    \end{cases}
  \end{align*}
  This induces a permutation action on our chosen basis for the
  elementary abelian group $\Phi(M)$; the orbits are given by
  \[
  [\widetilde{y}_i,\widetilde{y}_j] \equiv_X [\widetilde{y}_{i'},\widetilde{y}_{j'}]
  \;\;\longleftrightarrow\;\; j-i \equiv_{p^k} j'-i' \qquad \text{and}
  \qquad \widetilde{y}_0^{\; p} \equiv_X \ldots \equiv_X
  \widetilde{y}_{p^k-1}^{\; p}.
  \]
  We define $\widetilde{M} = M/[\Phi(M),X]$ and, for simplicity,
  continue to write $\widetilde{y}_0, \ldots, \widetilde{y}_{p^k-1}$
  for the images of these elements in~$\widetilde{M}$.  Then
  \begin{enumerate}
  \item[$\circ$] the images of
    $\widetilde{y}_0, \ldots , \widetilde{y}_{p^k-1}$ generate
    independently the elementary abelian quotient
    $\widetilde{M}/\Phi(\widetilde{M})$ and
  \item[$\circ$] the elements $[\widetilde{y}_0, \widetilde{y}_i]$,
    for $1 \le i  \le  (p^k-1)/2$, together with $\widetilde{y}_0^{\; p}$
    generate independently the elementary abelian
    group~$\Phi(\widetilde{M})$.
  \end{enumerate}
  In particular, this yields
  $\log_p \lvert \widetilde{M} \rvert = p^k + (p^k-1)/2 + 1$.

  Finally, we put $\widetilde{y} = \widetilde{y}_0$ and form the
  semidirect product
  \[
  \widetilde{G}_k = \langle \widetilde{x}, \widetilde{y} \rangle = X
  \ltimes \widetilde{M}
  \]
  with the induced action.  Upon replacing $x,y$ by
  $\widetilde{x},\widetilde{y}$, we see that all the defining
  relations of $G_k$ in \eqref{equ:pres-for-G-k} are valid in
  $\widetilde{G}_k$.  Since
  $\log_p \lvert G_k \rvert \le p^k + k + 2 + (p^k-1)/2 = \log_p
  \lvert \widetilde{G}_k \rvert$,
  we conclude that $G_k \cong \widetilde{G}_k$.
\end{proof}

Our next aim is to prove the following structural result.

\begin{proposition} \label{pro:index-Gk-Gk-hoch-p} In the set-up from
  Section~\ref{sec:presentation}, for $k\ge 2$, the subgroup
  $G_k^{\, p^k} \le G_k$ is elementary abelian and central in $G_k$;
  it is generated independently by $x^{p^k}$,
  $w = y_{p^k-1} \cdots y_1y_0$ and
  $v = w \cdot y_{p^k-1}^{-1} \cdots y_1^{-1} y_0^{-1}$.

  Consequently
  \[
  G_k^{\, p^k} \cong C_p \times C_p \times C_p, \qquad \log_p \vert
  G_k : G_k^{\, p^k} \rvert = \log_p \lvert G_k \rvert - 3
  \]
  and
  \begin{equation*}
    \begin{split}
      G_k / G_k^{\, p^k} & \cong \langle x,y \mid x^{p^{k}},\;
      y^{p^2},\; 
      [y^p,x],\; w(x,y),\; v(x,y);\\
      & \qquad \quad [y_0,y_i]^p,\; [y_0,y_i,x],\; [y_0,y_i,y] \quad
      \mathrm{for } 1\le i\le \tfrac{p^k-1}{2} \rangle.
    \end{split}
  \end{equation*}
\end{proposition}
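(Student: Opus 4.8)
The plan is to read off as much as possible from the quotient $W_k=G_k/Z_k$ and to pin down the rest by explicit collection of $p^k$\nobreakdash-th powers. First, since $G_k/Z_k\cong W_k$ and $W_k^{\,p^k}=\langle\bar w\rangle\cong C_p$ by Proposition~\ref{pro:series-Wk}, we immediately get $G_k^{\,p^k}\subseteq\langle w\rangle Z_k$. A short computation with the relations in \eqref{equ:pres-for-G-k} shows that $w$ is central in $G_k$ and has order $p$: conjugation by $x$ and by $y_0$ changes $w$ by $\prod_{i=1}^{p^k-1}[y_0,y_i]^{\pm1}$, which is trivial because $[y_0,y_i]=[y_0,y_{p^k-i}]^{-1}$, while $w^p=(y_0^p)^{p^k}=1$ using $y_i^p=y^p$ and $y^{p^2}=1$. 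Hence $\langle w\rangle Z_k$ is central and elementary abelian (recall $Z_k\le Z(G_k)$ has exponent $p$ by Lemma~\ref{lem:centre-of-G} and Lemma~\ref{lem:order-Gk}), and therefore so is $G_k^{\,p^k}$.

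For the inclusion $\langle x^{p^k},w,v\rangle\subseteq G_k^{\,p^k}$ I use the collection identity $(x^m\alpha)^n=x^{mn}\,\alpha^{x^{m(n-1)}}\alpha^{x^{m(n-2)}}\cdots\alpha^{x^m}\alpha$, which follows by induction on $n$. With $m=1$, $n=p^k$, $\alpha=y_0$ it gives $(xy_0)^{p^k}=x^{p^k}\,y_{p^k-1}\cdots y_1y_0=x^{p^k}w$, and likewise $(xy_0^{-1})^{p^k}=x^{p^k}\,y_{p^k-1}^{-1}\cdots y_1^{-1}y_0^{-1}$; since $x^{p^k}$ is central, multiplying these yields $(xy_0)^{p^k}(xy_0^{-1})^{p^k}=x^{2p^k}v$. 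Together with $x^{p^k}=x^{p^k}\in G_k^{\,p^k}$ this shows $x^{p^k},w,v\in G_k^{\,p^k}$.

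The reverse inclusion amounts to proving $g^{p^k}\in\langle x^{p^k},w,v\rangle$ for every $g=x^ah$ with $h\in H_k$. If $v_p(a)\ge1$ then in fact $g^{p^k}=1$: reducing modulo $Z_k$ into $W_k\cong\langle1+t\rangle\ltimes\mathbb{F}_p[\![t]\!]$ as in \eqref{equ:W-explicit}, one sees $g^{p^{k-v_p(a)}}\in H_k$, and either $v_p(a)\ge2$ so that $g^{p^k}=1$ because $H_k$ has exponent dividing $p^2$ (as $H_k'\subseteq Z_k$ and $H_k^{p}=\langle y^p\rangle$ both have exponent $p$), or $v_p(a)=1$, where a power‑series computation shows the augmentation of $g^{p^{k-1}}\in H_k$ in $W_k$ vanishes, whence $g^{p^k}=(g^{p^{k-1}})^p=1$ as $k\ge2$. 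For $p\nmid a$ the collection identity gives $g^{p^k}=x^{ap^k}\,\phi_a(h)$ with $\phi_a(h):=h^{x^{a(p^k-1)}}\cdots h^{x^a}h$; because $H_k'\subseteq Z_k$ is central, $\phi_a$ is a near-homomorphism $H_k\to H_k$ whose defect $\phi_a(h_1h_2)\phi_a(h_2)^{-1}\phi_a(h_1)^{-1}$ is a power of $v$, using the identity $\sum_{m=0}^{p^k-1}m\,[y_0,y_m]=v$ and $\sum_{m=0}^{p^k-1}[y_0,y_m]=1$ in $Z_k$; moreover $\phi_a(y_c)=\phi_a(y_0)^{x^c}=w\zeta_a$ with $\zeta_a:=w^{-1}\phi_a(y_0)\in Z_k$ independent of $c$. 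Combining these facts, $\phi_a(h)\in w^{\mathrm{aug}(h)}\zeta_a^{\mathrm{aug}(h)}\langle v\rangle$, so $g^{p^k}\in\langle x^{p^k},w,v\rangle$ once we know $\zeta_a\in\langle v\rangle$. This last point is the main obstacle. It can be handled by noting that $x\mapsto x^a,\ y\mapsto y$ extends to an automorphism $\mu_a$ of $G_k$ (one checks it respects \eqref{equ:pres-for-G-k}, using that all $[y_0,y_i]$ are central and $p$-torsion), under which $\mu_a(w)=w\zeta_a$, $\mu_a(v)=v^{a^{-1}}$, $\mu_a(x^{p^k})=(x^{p^k})^{a}$ and $\mu_b\mu_a=\mu_{ba}$; the resulting cocycle relation $\zeta_{ba}=\zeta_b\,\mu_b(\zeta_a)$ reduces the claim to a generating set of $(\mathbb{Z}/p^{k+1})^*$, and for $a=-1$ it is transparent since $\mu_{-1}(w)=y_0y_1\cdots y_{p^k-1}=wv^{-1}$, so $\zeta_{-1}=v^{-1}$; the remaining generators are dealt with by evaluating the explicit expression $\zeta_a=\prod_{(c,c')}[y_0,y_{c'-c}]$ (product over the inversions of $c\mapsto\langle a^{-1}c\rangle$) against $v=\sum_m m\,[y_0,y_m]$.

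Finally, the "Consequently" part is immediate from $G_k^{\,p^k}=\langle x^{p^k},w,v\rangle$: this subgroup is $\cong C_p^3$ because $w\notin Z_k$ (as $\bar w\neq1$ in $W_k$) while, writing $v=\prod_{d=1}^{(p^k-1)/2}[y_0,y_d]^{2d-p^k}$, the element $v$ has a nonzero $[y_0,y_1]$‑coordinate in the basis of $Z_k$ from Lemma~\ref{lem:order-Gk} (here $2\not\equiv0\bmod p$ since $p>2$), so $v\notin\langle x^{p^k}\rangle$. Hence $\log_p\lvert G_k:G_k^{\,p^k}\rvert=\log_p\lvert G_k\rvert-3$, and the displayed presentation of $G_k/G_k^{\,p^k}$ is obtained from \eqref{equ:pres-for-G-k} by adjoining the relations $x^{p^k}=w(x,y)=v(x,y)=1$, upon which $x^{p^{k+1}}$ and $[x^{p^k},y]$ become redundant.
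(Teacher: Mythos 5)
Your strategy---computing $g^{p^k}$ directly for every $g=x^ah$---is genuinely different from the paper's, and much of it is sound: the containment $G_k^{\,p^k}\subseteq\langle w\rangle Z_k$ read off from $W_k^{\,p^k}$, the inclusion $\langle x^{p^k},w,v\rangle\subseteq G_k^{\,p^k}$, the case $p\mid a$, and the reduction of the case $p\nmid a$ to the single assertion $\zeta_a\in\langle v\rangle$ are all correct. But that assertion is exactly where the argument stops being a proof. Your cocycle relation $\zeta_{ba}=\zeta_b\,\mu_b(\zeta_a)$ together with $\mu_b(\langle v\rangle)=\langle v^{b^{-1}}\rangle=\langle v\rangle$ shows that $\{a:\zeta_a\in\langle v\rangle\}$ is a submonoid of $(\mathbb{Z}/p^k\mathbb{Z})^\times$ containing $\pm1$; since $\{\pm1\}$ is a proper subgroup whenever $p>3$ or $k>1$, everything hinges on verifying $\zeta_a\in\langle v\rangle$ for a generator $a$ of this cyclic group, and for that you offer only ``evaluating the explicit expression against $v$''. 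Concretely this amounts to showing that the number of inversions, with difference in a prescribed residue class $s$, of the permutation of $\{0,\dots,p^k-1\}$ induced by multiplication by $a$ is---modulo $p$ and modulo the identifications $[y_0,y_s]=[y_0,y_{p^k-s}]^{-1}$---proportional to $s$ with a constant independent of $s$. That is a nontrivial Dedekind-sum-type count; it is not carried out here and is not obvious. (The claim is true, but only a posteriori: it follows from the proposition itself via $(x^ay_0)^{p^k}=x^{ap^k}w\zeta_a\in\langle x^{p^k},w,v\rangle$.)

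The paper sidesteps this entirely by bounding $G_k^{\,p^k}$ from above rather than evaluating each power: Lemma~\ref{lem:contained}, proved with the collection formula~\eqref{equ:commutator-formula-3}, gives $G_k^{\,p^k}\subseteq\langle x^{p^k}\rangle\gamma_{p^k}(G_k)$, and Lemma~\ref{lem:gamma-m-modulo-gamma-m+1} gives $\lvert\gamma_{p^k}(G_k)\rvert\le p^2$ (class exactly $p^k$, top lower-central factor of rank at most $2$), so $\lvert\langle x^{p^k}\rangle\gamma_{p^k}(G_k)\rvert\le p^3$; combined with the lower bound $\langle x^{p^k},w,v\rangle\cong C_p^{\,3}$ from Lemma~\ref{lem:contains} this forces equality. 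If you want to rescue your route without the inversion count, the cheapest repair is to import that upper bound: it yields $w\zeta_a\in\langle x^{p^k}\rangle\gamma_{p^k}(G_k)$, hence $\zeta_a\in\gamma_{p^k}(G_k)\cap[H_k,H_k]=\langle v\rangle$ by Corollary~\ref{cor:gamma-central-part}---but at that point you have essentially reproduced the paper's argument.
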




The proof requires a series of lemmata.

\begin{lemma} \label{lem:w-elements} The elements
  \[
    w = y_{p^k-1}\cdots y_1y_0 \qquad \text{and} \qquad
    w' = y_{p^k-1}^{\, -1} \cdots y_1^{\, -1} y_0^{\, -1}
  \]
  are of order $p$ in $G_k$ and lie in $G_k^{\, p^k} \cap Z(G_k)$.
\end{lemma}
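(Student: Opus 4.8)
The plan is to carry out everything inside the finite group $G_k = F/N_k$, using the presentation~\eqref{equ:pres-for-G-k}; I write $y_i$ for the image of $y^{x^i}$, and I note first that indices may be read modulo $p^k$, since $x^{p^k}$ is central in $G_k$ (because $[x^{p^k},y]=1$ and trivially $[x^{p^k},x]=1$). From~\eqref{equ:pres-for-G-k} I would then extract, by conjugating the defining relations by powers of $x$ and of $y$, the following structural facts:
(a) $G_k/H_k$ is cyclic of order $p^k$ (modulo $H_k$ the generator $y$ dies and $x$ has order $p^k$), so in particular $G_k^{\,p^k}\le H_k$;
(b) $H_k=\langle y_i\mid i\in\mathbb{Z}\rangle$ has nilpotency class at most $2$, with $H_k'=\langle[y_i,y_j]\rangle$ elementary abelian and \emph{central in $G_k$} — the mechanism being that $[y_0,y_i,x]=1$ makes each $[y_0,y_i]$ invariant under $\langle x\rangle$, whence $[y_j,y_{i+j}]=[y_0,y_i]$ for every $j$, and then $[y_0,y_i,y]=1$ together with this invariance forces $[y_0,y_i]$ to commute with every $y_j$;
(c) $z:=y^p$ is central of order dividing $p$ (from $[y^p,x]=1$ and $y^{p^2}=1$) and $y_i^{\,p}=z$ for all $i$;
(d) $[y_0,y_{-i}]=[y_0,y_i]^{-1}$ for all $i$, again using the $\langle x\rangle$-invariance of $[y_0,y_i]$.

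The membership $w,w'\in G_k^{\,p^k}$ is then immediate from an elementary identity valid in any group: collecting powers of $x$ to the front gives
\[
(xy)^n = x^n\, y_{n-1}y_{n-2}\cdots y_1y_0
\qquad\text{and}\qquad
(xy^{-1})^n = x^n\, y_{n-1}^{-1}\cdots y_1^{-1}y_0^{-1}.
\]
Taking $n=p^k$ yields $w=(x^{-1})^{p^k}(xy)^{p^k}$ and $w'=(x^{-1})^{p^k}(xy^{-1})^{p^k}$, each a product of $p^k$-th powers of elements of $G_k$, hence in $G_k^{\,p^k}$.

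For the order, I would use that in a group of class $\le 2$ whose derived subgroup has exponent $p$ one has $(a_1\cdots a_m)^p = a_1^{\,p}\cdots a_m^{\,p}$, since the corrections arising from collecting are $\binom{p}{2}$-th powers of commutators and $p\mid\binom{p}{2}$ for $p$ odd. Applied in $H_k$ this gives $w^p=\prod_{i=0}^{p^k-1}y_i^{\,p}=z^{p^k}=1$ by (c), and likewise $(w')^p=z^{-p^k}=1$. Finally, since $G_k=\langle x,y\rangle$, for centrality it suffices to check $[w,x]=[w,y]=1$, and similarly for $w'$. Working in $H_k$ (class $\le 2$, $H_k'$ central): $[w,y]=[w,y_0]=\prod_{i=0}^{p^k-1}[y_i,y_0]=\prod_{i=1}^{p^k-1}[y_0,y_i]^{-1}$, and pairing the index $i$ with $p^k-i$ (there is no fixed point, as $p^k$ is odd) and invoking (d) makes every pair cancel; while $[w,x]=w^{-1}w^x=[y_0,\,y_{p^k-1}\cdots y_1]=\prod_{i=1}^{p^k-1}[y_0,y_i]$, which vanishes by the same pairing. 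The computations for $w'$ are identical up to inverses.

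The one point that needs genuine care is step (b)/(d): one must verify that the \emph{truncated} relation set in~\eqref{equ:pres-for-G-k}, indexed only by $1\le i\le(p^k-1)/2$, really does propagate --- through the $\langle x\rangle$-shift together with the antisymmetry $[y_0,y_i]=[y_i,y_0]^{-1}$ --- to centrality of all $[y_a,y_b]$ in $G_k$ and to the identity in (d) for every $i$. Once that is established, everything else is routine bookkeeping, and the lemma follows.
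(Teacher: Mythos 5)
Your argument is correct and follows essentially the same route as the paper: the membership $w,w'\in G_k^{\,p^k}$ via $w=x^{-p^k}(xy)^{p^k}$ and $w'=x^{-p^k}(xy^{-1})^{p^k}$, the order computation via the class-$2$ collection formula together with $y_i^{\,p}=y^p$, and the cancellation of $\prod_i[y_0,y_i]$ by pairing $i$ with $p^k-i$ are exactly the paper's steps. The one point you defer --- propagating the truncated relation set to $[y_0,y_{p^k-i}]^{-1}=[y_0,y_i]$ for all $i$ --- is precisely the paper's identity \eqref{equ:note} and goes through by the mechanism you describe, namely $[y_0,y_{p^k-i}]^{-1}=[y_{p^k-i},y_0]=[y_0,y_i]^{x^{-i}}=[y_0,y_i]$ with indices read modulo $p^k$ because $x^{p^k}$ is central; do carry that line out, and also record why $w\ne 1$ and $w'\ne 1$ (their images in $W_k=G_k/Z_k$ generate $W_k^{\,p^k}\cong C_p$ by Proposition~\ref{pro:series-Wk}), since the lemma asserts order exactly $p$ rather than order dividing $p$.
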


\begin{proof}
  Recall that $H_k = \langle y_0, y_1, \ldots, y_{p^k-1} \rangle Z_k \le G_k$
  and observe that $[H_k,H_k]$ is a central subgroup of exponent $p$
  in~$G_k$.  Furthermore, $[y^p,x] =1$ implies
  $y_{p^k-1}^{\, p} = \ldots = y_0^{\, p}$ in~$G_k$.  Thus
  \eqref{equ:commutator-formula-1} yields
  \[
  w^p = y_{p^k-1}^{\, p} \cdots y_1^{\, p} y_0^{\, p} = y^{p^{k+1}} = 1.
  \]
  As $w \ne 1$ we deduce that $w$ has order~$p$.  Likewise one
  shows that $w'$ has order~$p$.

  Clearly, $w = x^{-p^k} (xy)^{p^k}$ and
  $w' = x^{-p^k} (x y^{-1})^{p^k}$ lie in $G_k^{\, p^k}$.  In order to
  prove that $w$ is central, it suffices to check that $w$ commutes
  with the generators $x$ and $y$ of $G_k$.  First we observe that, for
  $1 \le i \le p^k-1$, the relation $[y_0,y_i,x]=1$ implies
 \begin{equation}\label{equ:note}
   [y_0,y_{p^k-i}]^{-1} = [y_{p^k-i},y_0] = [y_0,y_i]^{x^{-i}} =
   [y_0,y_i] \qquad \text{in $G_k$.} 
 \end{equation}
 Since $[H_k,H_k]$ is central in $G_k$, we deduce inductively that
 \begin{align*}
   [w,x] & = (y_{p^k-1} \cdots
           y_1y_0)^{-1} (y_{p^k-1} \cdots y_1 y_0)^x \\
         &  = y_0^{\, -1} y_1^{\, -1} \cdots y_{p^k-2}^{\, -1} \cdot
           y_{p^k-1}^{\, -1} y_0 y_{p^k-1} \cdot y_{p^k-2} \cdots
           y_2 y_1 \\
         & = y_0^{\, -1} y_1^{\, -1} \cdots y_{p^k-2}^{-1} \cdot y_0 [y_0,
           y_{p^k-1}] \cdot y_{p^k-2} \cdots y_2y_1 \\
         & = y_0^{-1} y_1^{-1} \cdots y_{p^k-3}^{-1} \cdot
           y_{p^k-2}^{\, -1} y_0 y_{p^k-2} \cdot
           y_{p^k-3} \cdots y_2 y_1 \cdot [y_0,y_{p^k-1}]\\
         & \quad \vdots \\
         & = [y_0,y_1] [y_0,y_2] \cdots [y_0,y_{p^k-2}] [y_0,y_{p^k-1}]\\
         & =1 && \text{by \eqref{equ:note}.} 
  \end{align*}
  Likewise, using the relation $[y_0,y_i,y]=1$ and \eqref{equ:note}, we
  obtain
  \[
  [w,y] = [y_{p^k-1} \cdots y_1y_0, y_0] = [y_{p^k-1},y_0]
  [y_{p^k-2},y_0] \cdots [y_1,y_0] =1.
  \]
  A similar computation can be carried out for~$w'$.
\end{proof}

\begin{lemma} \label{lem:contains} Putting
  \[
  v = ww' = y_{p^k-1}\ldots y_1y_0\cdot y_{p^k-1}^{\,-1} \ldots y_1^{\, -1}
  y_0^{\, -1},
  \]
  the subgroup $\langle x^{p^k}, w, v \rangle \le G_k$ is isomorphic
  to $C_p \times C_p \times C_p$ and lies in $G_k^{\, p^k} \cap Z(G_k)$.
\end{lemma}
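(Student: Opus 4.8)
The plan is to reduce the statement to a single explicit commutator computation. Fix $k\in\mathbb{N}$ and set $A=\langle x^{p^k},w,v\rangle$. First I would record that all three generators lie in $G_k^{\,p^k}\cap Z(G_k)$ and have order dividing~$p$: for $w$, hence for $w'$, and thus for $v=ww'$, this is precisely Lemma~\ref{lem:w-elements} (in particular $v^p=w^p w'^p=1$, as $w$ and $w'$ commute), while $x^{p^k}$ is visibly a $p^k$th power, commutes with $x$ trivially and with $y$ by the defining relation $[x^{p^k},y]=1$, and satisfies $x^{p^{k+1}}=1$ with $x^{p^k}\neq 1$ by Lemma~\ref{lem:order-Gk}. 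Hence $A$ is an elementary abelian subgroup of $G_k^{\,p^k}\cap Z(G_k)$ of rank at most~$3$, and it remains only to show $\lvert A\rvert=p^3$.

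Next I would project along $\rho\colon G_k\twoheadrightarrow G_k/Z_k\cong W_k$. Since $x^{p^k}\in R_k$ we get $\rho(x^{p^k})=1$; since the base group of $W_k$ is abelian we get $\rho(w')=\rho(w)^{-1}$, whence $\rho(v)=1$; and $\rho(w)$ is the image of $(xy)^{p^k}$, a generator of $W_k^{\,p^k}\cong C_p$, which is nontrivial by Proposition~\ref{pro:series-Wk}. As $A$ is elementary abelian, this forces $A\cap Z_k=\langle x^{p^k},v\rangle$ and $\lvert A\rvert=p\cdot\lvert\langle x^{p^k},v\rangle\rvert$. Moreover, each of the two factors of $v=\big(y_{p^k-1}\cdots y_0\big)\big(y_{p^k-1}^{-1}\cdots y_0^{-1}\big)$ is trivial modulo $[H_k,H_k]$, so $v\in[H_k,H_k]$; and by Lemma~\ref{lem:order-Gk} the subgroup $[H_k,H_k]$ of $Z_k$ is spanned by basis elements $[y_0,y_i]$, $1\le i\le(p^k-1)/2$, that are independent of $x^{p^k}$, so $v\in\langle x^{p^k}\rangle$ would already give $v=1$. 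The whole lemma therefore reduces to showing $v\neq 1$.

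This is the one genuinely computational point, and I expect it to be the main obstacle. A straightforward collection of commutators, using that $[H_k,H_k]$ is central in $G_k$ of exponent~$p$, gives
\[
v=\big(y_{p^k-1}\cdots y_0\big)\big(y_{p^k-1}^{-1}\cdots y_0^{-1}\big)=\prod_{0\le a<b\le p^k-1}[y_b,y_a].
\]
The relations $[y_0,y_i,x]=1$ make every $[y_0,y_m]$, and hence every $[y_b,y_a]$ with $b>a$, invariant under conjugation by $x$, so that $[y_b,y_a]=[y_0,y_{b-a}]^{-1}$; collecting equal commutators and using \eqref{equ:note}, i.e.\ $[y_0,y_{p^k-m}]=[y_0,y_m]^{-1}$, one obtains
\[
v=\prod_{m=1}^{(p^k-1)/2}[y_0,y_m]^{\,2m},
\]
with exponents read modulo~$p$ (since $[y_0,y_m]^p=1$ and $p\mid p^k$). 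Because the commutators $[y_0,y_m]$ with $1\le m\le(p^k-1)/2$ are independent in $Z_k$ by Lemma~\ref{lem:order-Gk} and $2\not\equiv 0\pmod p$, the $[y_0,y_1]$-component of $v$ is nonzero, so $v\neq 1$. Consequently $\langle x^{p^k},v\rangle\cong C_p\times C_p$, hence $\lvert A\rvert=p^3$ and $A\cong C_p\times C_p\times C_p$, lying in $G_k^{\,p^k}\cap Z(G_k)$ as shown.
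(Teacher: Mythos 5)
Your proposal is correct and follows essentially the same route as the paper: reduce everything to showing $v \neq 1$, then collect $v$ into the product $\prod_{m=1}^{(p^k-1)/2}[y_0,y_m]^{2m}$ of the independent generators of $Z_k$ from Lemma~\ref{lem:order-Gk}. Your justification of the reduction step (via the projection onto $W_k$ and the fact that $v \in [H_k,H_k]$) is a welcome elaboration of what the paper leaves implicit, but the substance of the argument is the same.
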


\begin{proof}
  From the presentation~\eqref{equ:pres-for-G-k} and from
  Lemma~\ref{lem:w-elements} it is clear that the subgroup
  $\langle x^{p^k}, w, v \rangle \le G_k$ is elementary abelian and
  lies in $G_k^{\, p^k} \cap Z(G_k)$.  Furthermore, in order to prove
  that
  $\langle x^{p^k}, w, v \rangle \cong C_p \times C_p \times C_p$, it
  suffices to establish that $v\ne 1$.

  Upon a similar rearrangement and cancellation as in the proof of
  Lemma~\ref{lem:w-elements}, we obtain
  \begin{equation*}
    v = \prod_{i=0}^{p^k-2} [y_i,y_{p^k-1}^{-1}] [y_i,y_{p^k-2}^{-1}] \cdots
    [y_i,y_{i+1}^{-1}].
  \end{equation*}
  Recall that all commutators appearing in the above product are
  central in~$G_k$.  In particular, we have
  $[y_0,y_{p^k-j}] = [y_0,y_{p^k-j}]^{x^i}=[y_i,y_{p^k-j+i}]$, for
  $1\le j\le p^k-1$ and $1\le i\le j-1$.  This gives
  \begin{align*}
    v & = [y_0,y_{p^k-1}^{-1}] \, [y_0,y_{p^k-2}^{-1}]^2 \cdots 
        [y_0,y_1^{-1}]^{p^k-1} \\
      & = [y_0,y_{p^k-1}]^{-1} \, [y_0,y_{p^k-2}]^{-2} \,
        \cdots [y_0,y_1]^{1-p^k} \\
      & = [y_0,y_1] \, [y_0,y_2]^{2} \, \cdots
        [y_0,y_{(p^k-1)/2}]^{\frac{p^k-1}{2}} \\
      & \qquad \cdot
        [y_0,y_{(p^k-1)/2}]^{(p^k-1)/2} \cdots
        [y_0,y_2]^{2} \, [y_0,y_1] && \text{by \eqref{equ:note}}\\
      & =[y_0,y_1]^2 \, [y_0,y_2]^4 \cdots [y_0,y_{(p^k-1)/2}]^{p^k-1}.
  \end{align*}
  Taking note of the second statement in Lemma~\ref{lem:order-Gk}, it
  follows that $v\ne 1$.
\end{proof}

\begin{lemma} \label{lem:gamma-2-exp-p} The group
  $\gamma_2(G_k) \le G_k$ has exponent~$p$.
\end{lemma}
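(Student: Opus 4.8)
The plan is to show that $\gamma_2(G_k)$ is generated modulo its subgroup $[\gamma_2(G_k),\gamma_2(G_k)]$ by commutators that already have order dividing $p$, and then to observe that $[\gamma_2(G_k),\gamma_2(G_k)]$ is itself central of exponent $p$, so that the $p$-th power map on $\gamma_2(G_k)$ is trivial. First I would recall from the presentation \eqref{equ:pres-for-G-k} and Lemma~\ref{lem:w-elements} that $G_k$ has nilpotency class at most $p^k$ and that $[H_k,H_k] \le Z(G_k)$ is elementary abelian; in particular the relations $[y_0,y_i,x] = [y_0,y_i,y] = 1$ force $[H_k,H_k]$ to be central, and the relations $[y_0,y_i]^p = 1$ together with $y_0^p = \dots = y_{p^k-1}^p$ being central make it of exponent $p$.

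Next I would identify generators for $\gamma_2(G_k)$. Since $G_k = \langle x,y \rangle$, the group $\gamma_2(G_k)$ is the normal closure of $[x,y] = [y,x]^{-1}$, so it is generated by the left-normed commutators $[y,x,\overset{j}{\ldots},x,z_1,\ldots,z_r]$ in the generators $x,y$; modulo $[\gamma_2(G_k),\gamma_2(G_k)]$ it is generated just by $c_j := [y,x,\overset{j}{\ldots},x]$ for $j \ge 1$ together with the commutators $[c_i,y]$. The key point is that every such commutator has order dividing $p$: for the $c_j$ this follows from the commutator collection formula \eqref{equ:commutator-formula-2}, which under our hypotheses $\gamma_2(G_k)^p$ being "almost trivial" gives $c_j^p \equiv [y,x,x,\overset{p}{\ldots},x,\ldots] $ — more precisely, I would use \eqref{equ:commutator-formula-2} with $r=1$ to express $[c_j^{\,p}, \cdot]$ and the relation structure to show $c_j^p \in [\gamma_2(G_k),\gamma_2(G_k)]$, and separately that $[c_i,y]^p = 1$ since $[H_k,H_k]$ has exponent $p$ and $[c_i,y]$ lands there after using $[y_0,y_i,y]=1$-type relations to rewrite $[c_i,y]$ in terms of the $[y_0,y_\ell]$.

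The main obstacle I anticipate is controlling the $p$-th powers of the $c_j = [y,x,\overset{j}{\ldots},x]$ themselves: a priori $c_j^p$ need not be trivial, only the stated relations are. The cleanest route is probably to work with the simplified collection formula \eqref{equ:commutator-formula-3}: once one knows $\gamma_2(G_k)^p = 1$ (which is what we are trying to prove) the formula trivialises, so instead I would set up an induction on the class, showing $\gamma_i(G_k)^p \le \gamma_{i+1}(G_k) [\gamma_2(G_k),\gamma_2(G_k)]$ and then pushing down the lower central series until it terminates. Alternatively — and this is likely how the authors proceed — one exploits that $G_k$ is a quotient of the universal object and compares directly with $W_k$: the base group of $W_k$ contributes the $[y_0,y_i]$ which have order $p$ by construction, while the "$x^{p^k}$-part" and the $c_j$ map into $\gamma_2(W_k)$, where Proposition~\ref{pro:series-Wk}(1) shows each $\gamma_i(W_k)/\gamma_{i+1}(W_k) \cong C_p$; lifting this and using that the kernel $Z_k$ of $G_k \to W_k$ is central of exponent $p$, one concludes $\gamma_2(G_k)^p \le Z_k \cap [\gamma_2(G_k),\gamma_2(G_k)]$, and a short direct check on the listed generators finishes it. I would present the argument via this comparison with $W_k$, as it reuses machinery already established in Section~\ref{sec:prelim} and avoids a delicate collection estimate.
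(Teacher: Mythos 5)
There is a genuine gap at exactly the point you yourself flag as ``the main obstacle'': you never actually prove that the generator $c_1=[y,x]$ (let alone the higher $c_j$) has order~$p$, and neither of your two proposed workarounds closes this. The comparison with $W_k$ only shows that $\gamma_2(G_k)^{\,p}\subseteq Z_k$, because $\gamma_2(W_k)$ lies in the elementary abelian base group; since $Z_k$ is central of exponent~$p$, this yields exponent dividing $p^2$, not~$p$, and the ``short direct check on the listed generators'' that you defer is precisely the substance of the lemma. The proposed induction $\gamma_i(G_k)^{\,p}\le\gamma_{i+1}(G_k)\,[\gamma_2(G_k),\gamma_2(G_k)]$ is likewise not carried out, and it is unclear what would drive it. Note that $y$ itself has order $p^2$ in $G_k$ (the relation is $y^{p^2}=1$, not $y^p=1$), so there is real content in showing $[y,x]^p=1$.

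The paper's proof hinges on an ingredient you never invoke: the relation $[y^p,x]=1$ from \eqref{equ:pres-for-G-k}, which gives $y_0^{\,p}=y_1^{\,p}=\cdots=y_{p^k-1}^{\,p}$. Since $\gamma_2(G_k)\le H_k$ and $[H_k,H_k]$ is central of exponent~$p$, the collection formula \eqref{equ:commutator-formula-1} with $p$ odd (all $\binom{p}{j}$ with $2\le j\le p-1$ divisible by $p$, and weight-$\ge 3$ commutators of elements of $H_k$ trivial) shows that the $p$-th power map behaves multiplicatively where needed, so it suffices to check that the single normal generator $[y,x]$ of $\gamma_2(G_k)$ has order~$p$. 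Writing $[y,x]=y_0^{\,-1}y_1$, one gets $[y,x]^p=y_0^{\,-p}y_1^{\,p}=1$ by the same formula together with $y_0^{\,p}=y_1^{\,p}$. You do mention in passing that ``$y_0^p=\dots=y_{p^k-1}^p$'' is central, but you only use it to argue that $[H_k,H_k]$ has exponent~$p$ (where it is not needed); the place it is actually needed is in computing $c_1^{\,p}$, and that step is missing from your argument.
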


\begin{proof}
  Recall that
  $H_k = \langle y_0, y_1, \ldots, y_{p^k-1} \rangle Z_k \le G_k$
  satisfies: $[H_k,H_k]$ is a central subgroup of exponent $p$
  in~$G_k$.  Since $p$ is odd,
  \eqref{equ:commutator-formula-1} shows that it
  suffices to prove that $[y,x]$ has order~$p$.  But
  $[y,x] = y_0^{\, -1} y_1$; thus \eqref{equ:commutator-formula-1} and
  $y_0^{\, p} = x^{-1} y_0^{\, p} x = y_1^{\, p}$ imply
  $[y,x]^p = y_0^{\, -p} y_1^{\, p} = 1$.
\end{proof}

\begin{lemma} \label{lem:gamma-m-modulo-gamma-m+1}
  The group $G_k$ has nilpotency class~$p^k$, and
  $\gamma_m(G_k) / \gamma_{m+1}(G_k)$ is elementary abelian of rank at
  most~$2$ for $2 \le m \le p^k$.
\end{lemma}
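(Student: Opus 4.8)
The plan is to reduce both parts of the statement to a single inductive claim about generators of the terms of the lower central series. The class of $G_k$ is at least $p^k$, since $G_k$ surjects onto $W_k$, which has class $p^k$ by Proposition~\ref{pro:series-Wk}. So it suffices to prove, for all $m\ge 2$, the claim
\[
\gamma_m(G_k) = \langle c_m, u_m\rangle\,\gamma_{m+1}(G_k),
\]
where $c_m = [y,x,\overset{m-1}{\ldots},x]$, where $u_m = [c_{m-1},y] = [y,x,\overset{m-2}{\ldots},x,y]$ for $m\ge 3$, and $u_2 = 1$. Granting the claim, $\gamma_m(G_k)/\gamma_{m+1}(G_k)$ is generated by at most two elements; for $m\ge 2$ it is elementary abelian, since $\gamma_m(G_k)\le\gamma_2(G_k)$ has exponent $p$ by Lemma~\ref{lem:gamma-2-exp-p}, so it has rank at most~$2$. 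Moreover, by the claim $\gamma_{p^k+1}(G_k)$ is generated modulo $\gamma_{p^k+2}(G_k)$ by $c_{p^k+1}$ and $u_{p^k+1}$, and I will show these are trivial, which yields the upper bound on the class.

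I would prove the claim by induction on $m$, starting from $\gamma_2(G_k) = \langle [y,x]\rangle\gamma_3(G_k)$. Two ingredients enter the inductive step. First, $u_m$ is central in $G_k$ for $m\ge 3$: indeed $c_{m-1}\in\gamma_{m-1}(W_k)$ lies in the \emph{abelian} base group of $W_k$ once $m-1\ge 2$ (cf.\ Proposition~\ref{pro:series-Wk}), so $u_m = [c_{m-1},y]$ maps to the identity in $W_k$, i.e.\ $u_m\in\ker(G_k\twoheadrightarrow W_k) = Z_k$, and $Z_k\le Z(G_k)$ by Lemma~\ref{lem:centre-of-G}. Second, assume $\gamma_{m-1}(G_k) = \langle c_{m-1},u_{m-1}\rangle\gamma_m(G_k)$ with $u_{m-1}$ central. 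Since $[u_{m-1},G_k]=1$ and $[\gamma_m(G_k),G_k] = \gamma_{m+1}(G_k)$, one gets $\gamma_m(G_k) = [\gamma_{m-1}(G_k),G_k] = \langle[c_{m-1},g]\mid g\in G_k\rangle\,\gamma_{m+1}(G_k)$. As $\gamma_m(G_k)/\gamma_{m+1}(G_k)$ is central in $G_k/\gamma_{m+1}(G_k)$, the map $g\mapsto[c_{m-1},g]\gamma_{m+1}(G_k)$ is a homomorphism $G_k\to\gamma_m(G_k)/\gamma_{m+1}(G_k)$, so its image is generated by the images of the two generators $x,y$ of $G_k$, namely by $c_m\gamma_{m+1}(G_k)$ and $u_m\gamma_{m+1}(G_k)$. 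This is the claim.

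To pin the class down to exactly $p^k$, I would then show $\gamma_{p^k+1}(G_k) = 1$. For $m\ge p^k+2$ the element $c_{m-1}$ already maps to $1$ in $W_k$, hence lies in $Z_k\le Z(G_k)$, so $c_m=[c_{m-1},x]$ and $u_m=[c_{m-1},y]$ both vanish. For $m=p^k+1$ one needs that $c_{p^k}$ itself is central, and here Lemma~\ref{lem:w-elements} and Proposition~\ref{pro:series-Wk} are used: $c_{p^k}\in\gamma_{p^k}(G_k)\le\gamma_2(G_k)\le H_k$ maps to a generator of $\gamma_{p^k}(W_k) = W_k^{\,p^k} = \langle y_0y_1\cdots y_{p^k-1}\rangle$, so $c_{p^k}\in\langle y_0y_1\cdots y_{p^k-1}\rangle Z_k$; and $y_0y_1\cdots y_{p^k-1}$ differs from the central element $w=y_{p^k-1}\cdots y_1y_0$ of Lemma~\ref{lem:w-elements} only by a product of commutators $[y_i,y_j]\in[H_k,H_k]\le Z(G_k)$, so it is central, whence so is $c_{p^k}$. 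Then $c_{p^k+1}=[c_{p^k},x]=1$ and $u_{p^k+1}=[c_{p^k},y]=1$, so the claim gives $\gamma_{p^k+1}(G_k)=\gamma_{p^k+2}(G_k)$, forcing $\gamma_{p^k+1}(G_k)=1$ since $G_k$ is nilpotent. The main obstacle is precisely the verification that the relevant commutators land in the central subgroup $Z_k$ (for $u_m$) or differ from a central element only by elements of $[H_k,H_k]$ (for $c_{p^k}$); this is where the defining relations of $G_k$ in~\eqref{equ:pres-for-G-k} and the wreath-product structure of $W_k$ are invoked, while the rest is routine lower-central-series bookkeeping.
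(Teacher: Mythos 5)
Your proposal is correct and follows essentially the same route as the paper: the paper likewise deduces $\gamma_m(G_k)=\langle [y,x,\overset{m-1}{\ldots},x],[y,x,\overset{m-2}{\ldots},x,y]\rangle\,\gamma_{m+1}(G_k)$ from the fact that $G_k$ is a central extension of $Z_k$ by $W_k$ together with Proposition~\ref{pro:series-Wk}, invokes Lemma~\ref{lem:gamma-2-exp-p} for the exponent, and kills $\gamma_{p^k+1}(G_k)$ by observing that $\gamma_{p^k}(G_k)Z_k=\langle w\rangle Z_k$ with $w$ central by Lemma~\ref{lem:w-elements}. You merely spell out the standard lifting argument (the homomorphism $g\mapsto [c_{m-1},g]\gamma_{m+1}(G_k)$ and the centrality of the $u_m$) that the paper leaves implicit.
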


\begin{proof}
  Let $2 \le m \le p^k$.  Since $G_k$ is a central extension of $Z_k$
  by $W_k$, we deduce from Proposition~\ref{pro:series-Wk} that
  \[
  \gamma_m(G_k) = \langle [y,x,\overset{m-1}{\ldots},x],
  [y,x,\overset{m-2}{\ldots},x,y] \rangle \; \gamma_{m+1}(G_k),
  \]
  and Lemma~\ref{lem:gamma-2-exp-p} shows that
  $\gamma_m(G_k) / \gamma_{m+1}(G_k)$ is elementary abelian of rank at
  most~$2$.  Again by Proposition~\ref{pro:series-Wk}, the nilpotency
  class of $G_k$ is at least~$p^k$.  Moreover,
  $\gamma_{p^k}(G_k) Z_k = \langle w \rangle Z_k$, where
  $w \in Z(G_k)$ by Lemma~\ref{lem:w-elements}.  We conclude
  that $G_k$ has nilpotency class precisely~$p^k$.
\end{proof}
 
\begin{lemma} \label{lem:contained} The group $G_k$ satisfies
  \begin{equation*}
    G_k^{\, p} \subseteq \langle x^p, y^p  \rangle \gamma_p(G_k) \qquad
    \text{and} \qquad   G_k^{\, p^j} \subseteq \langle x^{p^j} \rangle
    \gamma_{p^j}(G_k) \quad \text{for $j \ge 2$.}
  \end{equation*}
\end{lemma}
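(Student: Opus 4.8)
The plan is to reduce the statement to a pointwise claim: it suffices to show that $g^{p^j} \in \langle x^p, y^p\rangle\gamma_p(G_k)$ for every $g \in G_k$ when $j = 1$, and $g^{p^j} \in \langle x^{p^j}\rangle\gamma_{p^j}(G_k)$ when $j \ge 2$, since $\langle x^p, y^p \rangle \gamma_p(G_k)$ and $\langle x^{p^j} \rangle \gamma_{p^j}(G_k)$ are subgroups, $\gamma_p(G_k)$ and $\gamma_{p^j}(G_k)$ being normal in $G_k$. As $G_k/\gamma_2(G_k)$ is abelian and generated by the images of $x$ and $y$, I would write $g = x^a y^b c$ with $a,b \in \mathbb{Z}$ and $c \in \gamma_2(G_k)$. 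The main tool will be the simplified collection formulae~\eqref{equ:commutator-formula-3}: by Lemma~\ref{lem:gamma-2-exp-p} the group $\gamma_2(G_k)$ has exponent $p$, and since $G_k/Z_k \cong W_k$ has abelian second term $\gamma_2(W_k)$, one gets $[\gamma_2(G_k),\gamma_2(G_k)] \subseteq Z_k \subseteq Z(G_k)$ by Lemma~\ref{lem:centre-of-G}; these two properties are inherited by every two-generator subgroup of $G_k$, so the Remark to Proposition~\ref{pro:standard-commutator-id} makes~\eqref{equ:commutator-formula-3} available for all exponents $p^r$ and all pairs of elements.

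Next I would compute $g^{p^j}$ by applying~\eqref{equ:commutator-formula-3} twice, with $r = j$. First, setting $A = x^a y^b$, the congruence for $(Ac)^{p^j}$ yields
\[
(Ac)^{p^j} \in A^{p^j} \, c^{p^j} \, [c,A,\overset{p^j-1}{\ldots},A] \, L(A,c),
\]
and the last three factors all lie in $\gamma_{p^j}(G_k)$: one has $c^{p^j} = 1$ because $c \in \gamma_2(G_k)$; the iterated commutator $[c,A,\overset{p^j-1}{\ldots},A]$ has weight $p^j$, hence lies in $\gamma_{p^j}(G_k)$; and each commutator generating $L(A,c)$ has weight at least $p^j$, so lies in the normal subgroup $\gamma_{p^j}(G_k)$ as well. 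Thus $g^{p^j} = (Ac)^{p^j} \in (x^a y^b)^{p^j}\,\gamma_{p^j}(G_k)$. A second application of~\eqref{equ:commutator-formula-3}, now with $x^a$ and $y^b$ in the roles of the two elements, gives by the same weight count
\[
(x^a y^b)^{p^j} \in x^{a p^j} \, y^{b p^j} \, \gamma_{p^j}(G_k).
\]

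It remains to distinguish the two cases. For $j = 1$ this yields $g^p \in x^{ap} y^{bp}\,\gamma_p(G_k) \subseteq \langle x^p, y^p \rangle \gamma_p(G_k)$, whence $G_k^{\,p} \subseteq \langle x^p, y^p\rangle\gamma_p(G_k)$. For $j \ge 2$, the defining relation $y^{p^2} = 1$ of $G_k$ from~\eqref{equ:pres-for-G-k} forces $y^{b p^j} = 1$, so $g^{p^j} \in x^{a p^j}\,\gamma_{p^j}(G_k) \subseteq \langle x^{p^j}\rangle\gamma_{p^j}(G_k)$, and therefore $G_k^{\,p^j} \subseteq \langle x^{p^j}\rangle\gamma_{p^j}(G_k)$.

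I expect the only delicate points to be checking that the hypotheses $\gamma_2(G_k)^{\,p} = 1$ and $[\gamma_2(G_k),\gamma_2(G_k)] \subseteq Z(G_k)$ of the Remark hold (and are inherited by two-generator subgroups), so that the clean formula~\eqref{equ:commutator-formula-3} is legitimate, together with the weight bookkeeping that forces both the correction term $L(\cdot,\cdot)$ and the tail commutator into $\gamma_{p^j}(G_k)$; the latter is routine and the former is supplied by Lemmata~\ref{lem:centre-of-G} and~\ref{lem:gamma-2-exp-p}.
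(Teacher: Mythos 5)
Your argument is correct and follows essentially the same route as the paper: both rest on the simplified collection congruences~\eqref{equ:commutator-formula-3}, justified via Lemma~\ref{lem:gamma-2-exp-p} and the centrality of $[\gamma_2(G_k),\gamma_2(G_k)]\subseteq Z_k$, together with the relation $y^{p^2}=1$ to kill the $y$-contribution for $j\ge 2$. The only difference is cosmetic: the paper writes $g=x^m h$ with $h\in H_k$ and uses $H_k^{\,p}=\langle y^p\rangle$ in a single application of the formula, whereas you write $g=x^a y^b c$ with $c\in\gamma_2(G_k)$ and apply the formula twice; both variants are sound.
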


\begin{proof}
  Recall that
  $H_k = \langle y_0, y_1, \ldots, y_{p^k-1} \rangle Z_k \le G_k$ has
  exponent~$p^2$, and observe that
  Proposition~\ref{pro:standard-commutator-id} together with
  Lemma~\ref{lem:gamma-2-exp-p} yields
  $H_k^{\, p} = \langle y^p \rangle$.  Every element $g \in G$ is of
  the form $g = x^m h$, with $0 \le m < p^{k+1}$ and $h \in H_k$.
  Using \eqref{equ:commutator-formula-3}, based on
    Proposition~\ref{pro:standard-commutator-id} and
    Lemma~\ref{lem:gamma-2-exp-p}, 
  we conclude that
  \[
  g^p = (x^m h)^p \equiv x^{m p} h^p \in \langle x^p, y^p \rangle \mod
  \gamma_p(G_k),
  \]
  and for $j \ge 2$,
  \[
  g^{p^j} = (x^m h)^{p^j} \equiv x^{m p^j} h^{p^j} = x^{mp^j} \in
  \langle x^{p^j} \rangle \mod \gamma_{p^j}(G_k). \qedhere
  \]
\end{proof}

\begin{proof}[Proof of Proposition~\ref{pro:index-Gk-Gk-hoch-p}]
  Apply
  Lemmata~\ref{lem:contains}, \ref{lem:gamma-m-modulo-gamma-m+1}
  and~\ref{lem:contained}.
\end{proof}

From Lemma~\ref{lem:order-Gk} and
Proposition~\ref{pro:index-Gk-Gk-hoch-p} we deduce that
\[
\log_p \lvert G : G^{p^k} \rvert = \log_p \lvert G_k : G_k^{\, p^k}
\rvert = \tfrac{1}{2} (3p^k+2k-3). 
\]
On the other hand, we observe from Proposition~\ref{pro:series-Wk}
that
\[
\log_p \lvert G : Z G^{p^k} \rvert = \log_p \lvert W_k : W_k^{\, p^k}
\rvert =  p^k+k-1,
\]
hence
\[
\log_p \lvert ZG^{p^k} : G^{p^k} \rvert = \tfrac{1}{2} (3p^k+2k-3) -
(p^k+k-1) = \tfrac{1}{2}(p^k-1).
\]

Thus \eqref{equ:xi-1/3-P} follows from
\begin{equation} \label{equ:xi-for-P} \varliminf_{i\to \infty}
  \frac{\log_p \lvert ZG^{p^i} : G^{p^i} \rvert}{\log_p \lvert G :
    G^{p^i} \rvert} = \lim_{i \to \infty}
  \frac{\tfrac{1}{2}(p^i-1)}{\tfrac{1}{2} (3p^i+2i-3)} =
  \nicefrac{1}{3}.
\end{equation}

\begin{remark}
  In the literature, one sometimes encounters a variant of the
  $p$-power series, the \emph{iterated} $p$-power series of $G$ which
  is recursively given by
  \[
  \mathcal{I} \colon I_0(G) = G, \quad \text{and} \quad I_j(G) =
  I_{j-1}(G)^p \quad \text{for $j \geq 1$.}
  \]
  By a small modification of the proof of Lemma~\ref{lem:contained} we
  obtain inductively 
    \begin{equation*}
    I_j(G_k) \subseteq \big(\langle x^{p^{j-1}} \rangle
       \gamma_{p^{j-1}}(G_k) \big)^p \subseteq  \langle x^{p^j} \rangle
       \gamma_{p^j}(G_k) \quad \text{for $j \ge 2$,}
  \end{equation*}
  based on the commutator identities~\eqref{equ:commutator-formula-3}
  for~$r=1$.  With Proposition~\ref{pro:index-Gk-Gk-hoch-p} and
  Lemma~\ref{lem:gamma-m-modulo-gamma-m+1} this yields
  $G_k^{\, p^k} \subseteq I_k(G_k) \subseteq \langle x^{p^k} \rangle
  \gamma_{p^k}(G_k) = G_k^{\, p^k}$.
  We conclude that the $p$-power series $\mathcal{P}$ and the iterated
  $p$-power series $\mathcal{I}$ of~$G$ coincide.

  One may further note another natural filtration series
  $\mathcal{N} \colon N_i$, $i\in \mathbb{N}_0$, of $G$, consisting of
  the open normal subgroups defined in Section~\ref{sec:presentation},
  where we set $N_0=G$.  As $N_i \le G^{p^i}$ with
  $\log_p \lvert G^{p^i} : N_i \rvert \le 4$ for all
  $i \in \mathbb{N}_0$, we see that the filtration series
  $\mathcal{P}$ and $\mathcal{N}$ induce the same Hausdorff dimension
  function on~$G$.
\end{remark}


\section{The normal Hausdorff spectra of $G$ with respect to the
  lower $p$-series, the dimension subgroup series and the Frattini series} \label{sec:other-series}

We continue to use the notation set up in
Section~\ref{sec:presentation} and work with the finite quotients
$G_k$, $k \in \mathbb{N}$, of the pro-$p$ group~$G$.  Our aim is to
pin down the lower central series, the lower $p$-series, the dimension
subgroup series and the Frattini series of~$G_k$.  Subsequently, it
will be easy to complete the proof of Theorem~\ref{thm:main-thm}.

\begin{proposition} \label{pro:lower-central-Gk} The group $G_k$ is
  nilpotent of class $p^k$; its lower central series satisfies
  \[
  G_k = \gamma_1(G_k) = \langle x,y \rangle \; \gamma_2(G_k) \quad
  \text{with} \quad G_k/\gamma_2(G_k)  \cong C_{p^{k+1}} \times
  C_{p^2}
 \]
 and, for $1 \le i \le (p^k-1)/2$,
 \begin{align*}
   \gamma_{2i}(G_k) & = \langle [y,x,\overset{2i-1}{\ldots},x] \rangle
                      \; \gamma_{2i+1}(G_k), \\
   \gamma_{2i+1}(G_k) & = \langle [y,x,\overset{2i}{\ldots},x],
                        [y,x,\overset{2i-1}{\ldots},x,y] \rangle \;
                        \gamma_{2i+2}(G_k)
 \end{align*} 
 with
 \[
 \gamma_{2i}(G_k)/\gamma_{2i+1}(G_k) \cong C_p \quad \text{and} \quad
 \gamma_{2i+1}(G_k)/\gamma_{2i+2}(G_k) \cong C_p \times C_p.
 \]
\end{proposition}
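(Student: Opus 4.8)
The plan is to build on Lemmata~\ref{lem:gamma-2-exp-p} and~\ref{lem:gamma-m-modulo-gamma-m+1} — which already reduce matters to deciding, for each $m$ with $2 \le m \le p^k$, whether $\gamma_m(G_k)/\gamma_{m+1}(G_k)$ has rank $1$ or $2$ — and to settle this by exploiting a reflection symmetry of $G_k$.

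\emph{Reduction and a count.} From the presentation~\eqref{equ:pres-for-G-k}, abelianising annihilates every commutator relation, so $G_k/\gamma_2(G_k) \cong \langle x,y \mid [x,y],\, x^{p^{k+1}},\, y^{p^2}\rangle \cong C_{p^{k+1}} \times C_{p^2}$; together with Lemma~\ref{lem:order-Gk} this gives $\log_p\lvert\gamma_2(G_k)\rvert = \log_p\lvert G_k\rvert - (k+3) = \tfrac{3}{2}(p^k-1)$. Write $b_j = [y,x,\overset{j}{\ldots},x]$ and $c_j = [b_j,y] = [y,x,\overset{j}{\ldots},x,y]$. The proof of Lemma~\ref{lem:gamma-m-modulo-gamma-m+1} shows $\gamma_m(G_k) = \langle b_{m-1},\,c_{m-2}\rangle\,\gamma_{m+1}(G_k)$ for $2 \le m \le p^k$, with each factor $\gamma_m(G_k)/\gamma_{m+1}(G_k)$ elementary abelian of rank $1$ or $2$ (rank $\ge 1$ since $G_k$ has class exactly $p^k$). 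Since $c_{m-2} \in [H_k,H_k] \le Z_k$ is central in $G_k$ and maps to $1$ in $W_k$, while $b_{m-1}$ maps to a generator of $\gamma_m(W_k)/\gamma_{m+1}(W_k) \cong C_p$, the factor $\gamma_m(G_k)/\gamma_{m+1}(G_k)$ has rank $2$ exactly when the image of $c_{m-2}$ in it is non-trivial. There are $p^k - 1$ such factors and their ranks sum to $\tfrac{3}{2}(p^k-1)$, so exactly $(p^k-1)/2$ of them have rank~$2$. Thus it suffices to prove $c_{m-2} \in \gamma_{m+1}(G_k)$ for every \emph{even} $m$; the count then forces rank~$2$ for every odd $m$, and in both parities the stated generators are precisely $b_{m-1}$ and (for odd $m$) $c_{m-2}$.

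\emph{A reflection automorphism.} The assignment $x \mapsto x^{-1}$, $y \mapsto y$ respects all the relations in~\eqref{equ:pres-for-G-k}: the only point requiring care is that $[y_0,y_i] \mapsto [y_0,y_{-i}] = [y_0,y_i]^{-1}$ by~\eqref{equ:note}, so the relations $[y_0,y_i]^p$, $[y_0,y_i,x]$, $[y_0,y_i,y]$ are preserved. Hence one obtains an automorphism $\sigma$ of $G_k$ with $\sigma^2 = \mathrm{id}$; it fixes $H_k$ setwise (permuting $y_i \mapsto y_{-i}$) and inverts $[H_k,H_k]$, since $\sigma([y_i,y_j]) = [y_{-i},y_{-j}] = [y_0,y_{i-j}] = [y_0,y_{j-i}]^{-1} = [y_i,y_j]^{-1}$, again by~\eqref{equ:note}. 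On the quotient $W_k$, realised as in~\eqref{equ:Wk-explicit}, $\sigma$ acts on the base group by the ring automorphism $1+t \mapsto (1+t)^{-1}$, whence $t^j \mapsto (-1)^j t^j (1+t)^{-j}$; comparing with Proposition~\ref{pro:series-Wk}, $\sigma$ scales $\gamma_{j+1}(W_k)/\gamma_{j+2}(W_k)$ by $(-1)^j$ for $1 \le j \le p^k - 1$.

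\emph{Conclusion.} Fix an even $m$ with $2 \le m \le p^k - 1$; the case $m = 2$ is trivial as $c_0 = [y,y] = 1$, so assume $m \ge 4$. Since $c_{m-2} \in [H_k,H_k]$ and $\sigma$ inverts $[H_k,H_k]$, $\sigma$ acts on the image of $c_{m-2}$ in $\gamma_m(G_k)/\gamma_{m+1}(G_k)$ by $-1$. On the other hand, $\sigma(b_{m-2})$ and $b_{m-2}^{(-1)^{m-2}}$ have the same image in $\gamma_{m-1}(W_k)/\gamma_m(W_k)$ by the previous step, so in $\gamma_{m-1}(G_k)/\gamma_m(G_k)$ they differ by the image of some $\zeta \in Z_k$; using that $\sigma(y) = y$, that the commutator induces a bilinear map $\gamma_{m-1}(G_k)/\gamma_m(G_k) \times G_k/\gamma_2(G_k) \to \gamma_m(G_k)/\gamma_{m+1}(G_k)$, and that $[\zeta,y] = 1$, it follows that $\sigma$ acts on the image of $c_{m-2} = [b_{m-2},y]$ in $\gamma_m(G_k)/\gamma_{m+1}(G_k)$ by $(-1)^{m-2} = 1$. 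Comparing the two, the image of $c_{m-2}$ equals its own negative in $\gamma_m(G_k)/\gamma_{m+1}(G_k)$ and hence vanishes, as $p$ is odd; that is, $c_{m-2} \in \gamma_{m+1}(G_k)$, which completes the reduction. I expect the main obstacle to be this middle step — recognising the automorphism $\sigma$ and verifying carefully that it descends to $G_k$ (not merely to $W_k$) and inverts $[H_k,H_k]$; the sign bookkeeping coming from~\eqref{equ:Wk-explicit} and the dimension count are then routine.
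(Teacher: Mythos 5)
Your proposal is correct, and it diverges from the paper's proof at precisely the step you identify as the crux. The reduction is the same in both: the abelianisation, the order count from Lemma~\ref{lem:order-Gk}, and the rank-at-most-two bound from Lemma~\ref{lem:gamma-m-modulo-gamma-m+1} (together with the fact that $[y,x,\overset{m-1}{\ldots},x]$ survives in $W_k$) reduce everything to showing $[y,x,\overset{m-2}{\ldots},x,y] \in \gamma_{m+1}(G_k)$ for even $m$. For that step the paper computes explicitly: it expands $b_{j,m} = [[y,x,\overset{m-2}{\ldots},x]^{x^j},y]$ as a product $\prod e_{j+i}^{\pm\binom{m-2}{i}}$ of the central elements $e_i = [y_0,y_i]$, derives the recursion $b_{j,m} \equiv b_{j+2,m} \bmod \gamma_{m+1}(G_k)$ from a binomial identity, shifts to a carefully chosen index $j_0$ where the relation $e_{p^k-i} = e_i^{-1}$ folds the product onto itself, and concludes $b_{j_0,m}^{\,2} \in \gamma_{m+1}(G_k)$. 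Your reflection automorphism $\sigma \colon x \mapsto x^{-1}$, $y \mapsto y$ packages the same symmetry $e_{-i} = e_i^{-1}$ into a single global statement ($\sigma$ inverts $[H_k,H_k]$) and replaces the binomial bookkeeping by the observation that $\sigma$ acts by $(-1)^{m-2}$ on $\gamma_{m-1}(W_k)/\gamma_m(W_k)$ and hence, via the graded bracket with the $\sigma$-fixed element $y$, by $+1$ on the image of $c_{m-2}$; comparing the two signs kills that image since $p$ is odd. Each step of your argument checks out: $\sigma$ descends because the images of the relators of \eqref{equ:pres-for-G-k} vanish in $G_k$ (this is exactly what \eqref{equ:note} delivers), the bilinearity of the induced bracket $\gamma_{m-1}/\gamma_m \times G_k/\gamma_2 \to \gamma_m/\gamma_{m+1}$ together with $[\zeta,y]=1$ for $\zeta \in Z_k$ justifies the second sign computation, and the counting argument at the end is airtight. (One cosmetic point: to speak of ``the image of $\zeta$'' in $\gamma_{m-1}(G_k)/\gamma_m(G_k)$ you should note that $\zeta$ may be taken in $Z_k \cap \gamma_{m-1}(G_k)$ by Dedekind's law, or simply run the commutator expansion of $[b_{m-2}^{\pm1}g\zeta,y]$ directly.) Your route buys a cleaner, more conceptual argument that also makes transparent why the case $p=2$ in Appendix~\ref{sec:case-p-2} genuinely differs; the paper's explicit formula for $b_{j,m}$ is more laborious but is of the same computational flavour as \eqref{equ:comm-ci-cj}, which is needed anyway later in the proof of Proposition~\ref{pro:Frattini-Gk}.
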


\begin{proof}
  By Lemma~\ref{lem:gamma-m-modulo-gamma-m+1} the nilpotency class of
  $G_k$ is~$p^k$.  From $G_k = \langle x,y \rangle$ it is clear that
  $\gamma_2(G_k) = \langle [x,y] \rangle \gamma_3(G_k)$, and
  \eqref{equ:pres-for-G-k} gives
  $G_k/\gamma_2(G_k) \cong C_{p^{k+1}} \times C_{p^2}$.  From
  Lemma~\ref{lem:order-Gk} we know that
  \[
   \log_p \lvert G_k \rvert = (3p^k+2k+3)/2 = ((k+1) + 2) +
   \tfrac{p^k-1}{2} (1 + 2),
  \]
  and the proof of Lemma~\ref{lem:gamma-m-modulo-gamma-m+1} shows that
  \[
  \gamma_m(G_k) = \langle [y,x,\overset{m-1}{\ldots},x],
  [y,x,\overset{m-2}{\ldots},x,y] \rangle \; \gamma_{m+1}(G_k) \qquad
  \text{for $2 \le m \le p^k$.}
  \]

 Consequently, it suffices to prove that
  $[y,x,\overset{m-2}{\ldots},x,y] \in \gamma_{m+1}(G_k)$ whenever $m$
  is even.  More generally, we consider the elements
  \[
  b_{j,m} = [ [y,x,\overset{m-2}{\ldots},x]^{x^j},y] \qquad \text{for
    $2 \le m \le p^k$ and $j \in \mathbb{N}_0$.}
  \]

  Writing $e_i = [y_0,y_i] \in Z_k \subseteq Z(G_k)$ for
  $i \in \mathbb{Z}$, we recall from
  Lemma~\ref{lem:order-Gk} that
  \[
  b_{j,m} \in [H_k,H_k] = \langle
  e_i \mid 1 \le i \le \tfrac{p^k-1}{2} \rangle \cong C_p \times
  \overset{\frac{p^k-1}{2}}{\ldots} \times C_p.
  \]
  Induction on $m$ shows that
  \[
  [y,x,\overset{m-2}{\ldots},x] \equiv  \prod_{i=0}^{m-2}
  y_i^{\, (-1)^{m+i} \binom{m-2}{i}} \quad
  \text{modulo~$Z_k \subseteq Z(G_k)$,}
  \]
  and we deduce that
  \begin{equation} \label{equ:bjm-formula} b_{j,m} = \Big[
    \prod\nolimits_{i=0}^{m-2} y_{j+i}^{\, (-1)^{m+i} \binom{m-2}{i}}
    , y \Big] = \prod\nolimits_{i=0}^{m-2} e_{j+i}^{\, (-1)^{m+i+1}
      \binom{m-2}{i}}.
  \end{equation}
  The identities
  \[
   \binom{m-2}{i} - 2 \binom{m-1}{i} + \binom{m}{i} = 
   \binom{m-2}{i-2}
  \]
  imply that
  \begin{equation} \label{equ:recursion}
  b_{j,m} \equiv b_{j,m} b_{j,m+1}^{\, 2} b_{j,m+2} = b_{j+2,m} \quad
  \text{modulo $\gamma_{m+1}(G_k)$.}
  \end{equation}

  Now suppose that $m$ is even, and recall that $p \ne 2$.
  From~\eqref{equ:recursion} we obtain inductively
  $[y,x,\overset{m-2}{\ldots},x,y] = b_{0,m} \equiv b_{j_0,m}$ modulo
  $\gamma_{m+1}(G_k)$ for
  \[
  j_0 =
  \begin{cases}
    \frac{p^k+1}{2} - \frac{m}{2} & \text{if $p^k+1-m \equiv_4
      0$,} \\
    \frac{p^k+3}{2} - \frac{m}{2} & \text{if $p^k+1-m \equiv_4
      2$.}
  \end{cases}
  \]

  Consequently, it suffices to prove that
  $b_{j_0,m} \in \gamma_{m+1}(G_k)$.  
    First suppose that $p^k+1 \equiv_4 m$ and hence
    $j_0 = \frac{p^k+1}{2} - \frac{m}{2}$. 
  From~\eqref{equ:bjm-formula} and
  \eqref{equ:note} we see that
  \begin{align*}
    b_{j_0,m} & = \prod\nolimits_{i=0}^{m/2-1} e_{j_0+i}^{\,
                (-1)^{i+1} \binom{m-2}{i}} \prod\nolimits_{i=
                m/2}^{m-2} e_{p^k - (j_0+i)}^{\, 
                (-1)^i \binom{m-2}{i}} \\
              & = \prod\nolimits_{i=0}^{m/2-1} e_{j_0+i}^{\,
                (-1)^{i+1} \binom{m-2}{i}} \prod\nolimits_{i=
                m/2}^{m-2} e_{j_0 + (m-1-i)}^{\, 
                (-1)^{m-i} \binom{m-2}{(m-1-i)-1}} \\
              & = \prod\nolimits_{i=0}^{m/2-1} e_{j_0+i}^{\,
                (-1)^{i+1} \binom{m-2}{i}} \prod\nolimits_{i'=
                1}^{m/2-1} e_{j_0 + i'}^{\, 
                (-1)^{i'+1} \binom{m-2}{i'-1}} \\
              & = \prod\nolimits_{i=0}^{m/2-1} e_{j_0+i}^{\,
                (-1)^{i+1} \binom{m-1}{i}} 
  \end{align*}
  and similarly
  \begin{align*}
    b_{j_0,m+1}^{\, -1} & = \prod\nolimits_{i=0}^{m/2-1} e_{j_0+i}^{\,
                (-1)^{i+1} \binom{m-1}{i}} \prod\nolimits_{i=
                m/2}^{m-1} e_{p^k - (j_0+i)}^{\, 
                (-1)^i \binom{m-1}{i}} \\
              & = \prod\nolimits_{i=0}^{m/2-1} e_{j_0+i}^{\,
                (-1)^{i+1} \binom{m-1}{i}} \prod\nolimits_{i=
                m/2}^{m-1} e_{j_0 + (m-1-i)}^{\, 
                (-1)^{m-i} \binom{m-1}{m-1-i}} \\
              & = \prod\nolimits_{i=0}^{m/2-1} e_{j_0+i}^{\,
                (-1)^{i+1} \binom{m-1}{i}} \prod\nolimits_{i'=
                0}^{m/2-1} e_{j_0 + i'}^{\, 
                (-1)^{i'+1} \binom{m-1}{i'}} \\
              & = \Big( \prod\nolimits_{i=0}^{m/2-1} e_{j_0+i}^{\,
                (-1)^{i+1} \binom{m-1}{i}} \Big)^2
  \end{align*}
  Hence $b_{j_0,m}^{\, 2} = b_{j_0,m+1}^{\, -1} \in \gamma_{m+1}(G_k)$,
  and $p \ne 2$ implies $b_{j_0,m} \in \gamma_{m+1}(G_k)$.

    In the remaining case $p^k+1 \equiv_4 m + 2$ we have
    $j_0 = \frac{p^k+3}{2} - \frac{m}{2}$, and a slight variation of
    the argument above shows that
    $b_{j_0,m}^{\, 2} = b_{j_0-1,m+1}$, hence
    $b_{j_0,m} \in \gamma_{m+1}(G_k)$.
\end{proof}

\begin{corollary} \label{cor:gamma-central-part}
  For $2 \le m \le p^k$ and
  $\nu(m) = \lfloor \frac{1}{2} (p^k-m+2) \rfloor$, we have
  \[
    \gamma_m(G_k) \cap Z_k = \langle [y,x,\overset{2j-1}{\ldots},x,y]
    \mid \lfloor \nicefrac{m}{2} \rfloor \le j \le
    \nicefrac{(p^k-1)}{2} \rangle \cong C_p^{\, \nu(m)}
  \]
  and
  $\gamma_m(G_k) \cap Z(G_k) = \langle [y,x,\overset{p^k-1}{\ldots},x]
  \rangle \times (\gamma_m(G_k) \cap Z_k) \cong C_p^{\, \nu(m)+1}$.
  In particular,
  \[
  [y,x,\overset{m-2}{\ldots},x,y] \in \langle [y,x,\overset{2j-1}{\ldots},x,y]
  \mid \nicefrac{m}{2} \le j \le \nicefrac{(p^k-1)}{2}
  \rangle \qquad \text{for $m \equiv_2 0$.}
  \]
\end{corollary}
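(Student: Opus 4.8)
The plan is to read off the corollary from Propositions~\ref{pro:series-Wk} and~\ref{pro:lower-central-Gk} by exploiting the central extension $1 \to Z_k \to G_k \to W_k \to 1$, where $W_k = G_k/Z_k$.  For $1 \le j \le \nicefrac{(p^k-1)}{2}$ set $c_j = [y,x,\overset{2j-1}{\ldots},x,y]$.  Since $[y,x,\overset{2j-1}{\ldots},x]$ has image in the abelian base group of $W_k$, the element $c_j$ maps to $1$ in $W_k$, so $c_j \in Z_k$ and hence has order dividing~$p$.  Applying $\gamma_m(\cdot)$ to the surjection $G_k \twoheadrightarrow W_k$ yields $\gamma_m(G_k)/(\gamma_m(G_k)\cap Z_k) \cong \gamma_m(W_k)$; comparing, for $2 \le m \le p^k$, the orders of the successive lower-central quotients of $G_k$ (Proposition~\ref{pro:lower-central-Gk}) with those of $W_k$ (Proposition~\ref{pro:series-Wk}) then shows that $\gamma_{2i}(G_k) \cap Z_k = \gamma_{2i+1}(G_k) \cap Z_k$ and $\lvert (\gamma_{2i+1}(G_k)\cap Z_k) : (\gamma_{2i+2}(G_k)\cap Z_k) \rvert = p$ for $1 \le i \le \nicefrac{(p^k-1)}{2}$, and that $c_i$, which lies in $\gamma_{2i+1}(G_k)\cap Z_k$ but --- by Proposition~\ref{pro:lower-central-Gk} --- not in $\gamma_{2i+2}(G_k)$, represents a generator of this last quotient.

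Telescoping down to $\gamma_{p^k+1}(G_k) = 1$ (here it matters that $p^k$ is odd, so the chain of index-$p$ steps ends with the one represented by $c_{(p^k-1)/2}$) gives $\gamma_m(G_k) \cap Z_k = \langle c_j \mid \lfloor m/2 \rfloor \le j \le \nicefrac{(p^k-1)}{2}\rangle$ for $2 \le m \le p^k$.  This subgroup has order at least $p^{\nu(m)}$ by the just-described chain of jumps and at most $p^{\nu(m)}$ because it is elementary abelian, generated by the $\nu(m) = \nicefrac{(p^k-1)}{2} - \lfloor m/2 \rfloor + 1$ elements $c_j$; hence these $c_j$ are independent and $\gamma_m(G_k) \cap Z_k \cong C_p^{\,\nu(m)}$.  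For the statement about $Z(G_k)$ I would put $d = [y,x,\overset{p^k-1}{\ldots},x]$: since $\gamma_{p^k}(G_k)$ is the last nontrivial term of the lower central series of $G_k$ (Proposition~\ref{pro:lower-central-Gk}), we have $d \in \gamma_{p^k}(G_k) \subseteq Z(G_k)$, whereas $d$ has nontrivial image in $\gamma_{p^k}(W_k)$, so $d \notin Z_k$ and $\langle d\rangle \cap Z_k = 1$.  As $d \in \gamma_{p^k}(G_k) \subseteq \gamma_m(G_k)$ for $m \le p^k$, this already gives the inclusion $\langle d\rangle \times (\gamma_m(G_k)\cap Z_k) \subseteq \gamma_m(G_k)\cap Z(G_k)$.

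For the reverse inclusion I would use that the image of $Z(G_k)$ in $W_k = G_k/Z_k$ lies in $Z(W_k)$ and that $Z(W_k) = \gamma_{p^k}(W_k) \cong C_p$, which is immediate from the explicit description~\eqref{equ:Wk-explicit} of $W_k$.  Then any $g \in \gamma_m(G_k)\cap Z(G_k)$ has image in $\langle \bar d\rangle$, so $g \equiv d^c \pmod{Z_k}$ for some $c$, and $gd^{-c} \in \gamma_m(G_k)\cap Z_k$; hence $\gamma_m(G_k)\cap Z(G_k) = \langle d\rangle \times (\gamma_m(G_k)\cap Z_k) \cong C_p^{\,\nu(m)+1}$.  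The final ``in particular'' then drops out: for even $m$ the element $[y,x,\overset{m-2}{\ldots},x,y]$ lies in $Z_k$ (once more its image in $W_k$ is a commutator of base elements) and in $\gamma_{m+1}(G_k)$ by the computation carried out in the proof of Proposition~\ref{pro:lower-central-Gk} (note $m+1 \le p^k$ since $m$ is even), hence in $\gamma_{m+1}(G_k)\cap Z_k = \langle c_j \mid \nicefrac{m}{2} \le j \le \nicefrac{(p^k-1)}{2}\rangle$.  The parts needing genuine, if small, input beyond the cited propositions are the observation $Z(W_k) \cong C_p$ used for the reverse inclusion about $Z(G_k)$, and the handling of the boundary indices $m = 2$ and $m = p^k$ together with the bookkeeping tied to the parity of $p^k$; everything else is routine.
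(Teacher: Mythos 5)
Your argument is correct and rests on the same ingredients as the paper's proof, namely Proposition~\ref{pro:lower-central-Gk} together with the structure of $W_k$ from Proposition~\ref{pro:series-Wk}. The paper merely packages the counting differently: it records the unique normal form $g = \prod_{i}[y,x,\overset{i}{\ldots},x]^{\alpha(i)}\prod_{j}[y,x,\overset{2j-1}{\ldots},x,y]^{\beta(j)}$ for $g \in \gamma_2(G_k)$ and reads off membership in $Z(G_k)$ respectively $Z_k$ from the vanishing of the $\alpha(i)$, whereas you obtain the same count by comparing lower-central quotients along the central extension $1 \to Z_k \to G_k \to W_k$; the fact $Z(W_k) \cong C_p$ that you make explicit is exactly what underlies the paper's centrality criterion.
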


\begin{proof}
  Clearly, all non-trivial elements of the form $[y,x,\ldots,x,y]$ are
  central and of order~$p$.  By Proposition~\ref{pro:lower-central-Gk}
  and Lemma~\ref{lem:gamma-2-exp-p}, also
  $[y,x,\overset{p^k-1}{\ldots},x]$ is central and of order~$p$.
  Moreover, Proposition~\ref{pro:lower-central-Gk} shows that every
  $g \in \gamma_2(G_k)$ can be written as
  \[
  g = \prod\nolimits_{i=1}^{p^k-1} [y,x,\overset{i}{\ldots},x]^{\alpha(i)}
  \prod\nolimits_{j=1}^{(p^k-1)/2}
  [y,x,\overset{2j-1}{\ldots},x,y]^{\beta(j)},
  \]
  where $\alpha(i), \beta(j) \in \{0,1,\ldots,p-1\}$ are uniquely
  determined by~$g$.  Furthermore, $g$ is central if and only if
  $\alpha(i) = 0$ for $1 \le i \le p^k-2$, and $g \in Z_k$ if and only
  if $\alpha(i) = 0$ for $1 \le i \le p^k-1$.
\end{proof}

\begin{corollary} \label{cor:lower-p-central-Gk} The lower $p$-series
  of $G_k$ has length $p^k$ and satisfies:
  \begin{align*}
    & G_k  = P_1(G_k)  = \langle x,y \rangle \; P_2(G_k) %
    &  \text{with} \quad %
    & G_k/P_2(G_k)  \cong C_p \times C_p, \\
    & P_2(G_k) = \langle x^p, y^p, [y,x] \rangle \; P_3(G_k) %
    & \text{with} \quad %
    & P_2(G_k)/P_3(G_k)  \cong C_p \times C_p \times C_p,
 \end{align*}
 and, for $3 \le i \le p^k$, the $i$th term is
 $P_i(G_k) = \langle x^{p^{i-1}} \rangle \gamma_i(G_k)$ so that
 \[
 P_i(G_k) =
 \begin{cases}
   \langle x^{p^{i-1}}, [y,x,\overset{i-1}{\ldots},x] \rangle
   \; P_{i+1}(G_k) & \text{if $i \equiv_2 0$ and $i \le k+1$,} \\
   \langle x^{p^{i-1}}, [y,x,\overset{i-1}{\ldots},x],
   [y,x,\overset{i-2}{\ldots},x,y] \rangle \; P_{i+1}(G_k) & \text{if
     $i \equiv_2 1$ and $i \le k+1$,} \\
   \langle [y,x,\overset{i-1}{\ldots},x] \rangle
   \; P_{i+1}(G_k) & \text{if $i \equiv_2 0$ and $i > k+1$,} \\
   \langle [y,x,\overset{i-1}{\ldots},x],
   [y,x,\overset{i-2}{\ldots},x,y] \rangle \; P_{i+1}(G_k) & \text{if
     $i \equiv_2 1$ and $i > k+1$}
 \end{cases}
 \]
 with
 \[
 P_i(G_k)/P_{i+1}(G_k) \cong 
 \begin{cases}
   C_p \times C_p & \text{if $i \equiv_2 0$ and $i \le k+1$,} \\
   C_p \times C_p \times C_p & \text{if $i \equiv_2 1$ and
     $i \le k+1$,} \\
   C_p & \text{if $i \equiv_2 0$ and $i > k+1$,} \\
   C_p \times C_p & \text{if $i \equiv_2 1$ and $i > k+1$}.
 \end{cases}
 \]
\end{corollary}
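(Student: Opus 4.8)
The plan is to first determine $P_i(G_k)$ exactly, namely to show $P_i(G_k)=\langle x^{p^{i-1}}\rangle\,\gamma_i(G_k)$ for $3\le i\le p^k$, and then to read off the stated decompositions and isomorphism types from this together with Proposition~\ref{pro:lower-central-Gk}. The cases $i=1,2$ are treated separately: $P_1(G_k)=G_k$, and since $G_k$ is $2$-generated, $P_2(G_k)=\Phi(G_k)$; combining Lemma~\ref{lem:contained} (which gives $G_k^{\,p}\subseteq\langle x^p,y^p\rangle\gamma_p(G_k)$) with $\gamma_2(G_k)=\langle[y,x]\rangle\gamma_3(G_k)$ yields $P_2(G_k)=\langle x^p,y^p,[y,x]\rangle\gamma_3(G_k)$ and $G_k/P_2(G_k)\cong C_p\times C_p$.

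The identity $P_i(G_k)=\langle x^{p^{i-1}}\rangle\gamma_i(G_k)$ will be proved by induction using $P_{i+1}(G_k)=P_i(G_k)^p[P_i(G_k),G_k]$; the same computation run from $P_2(G_k)$ (where the extra generator $y^p$ is central of order $p$ and hence contributes nothing to either factor) delivers the base case $P_3(G_k)=\langle x^{p^2}\rangle\gamma_3(G_k)$. In the inductive step the commutator part equals $\langle[x^{p^{i-1}},y]\rangle^{G_k}\gamma_{i+1}(G_k)$; applying~\eqref{equ:commutator-formula-2} with $r=i-1$, using $[x,y]^p=1$ (Lemma~\ref{lem:gamma-2-exp-p}) and $\binom{p^{i-1}}{j}\equiv0\pmod p$ for $0<j<p^{i-1}$, collapses $[x^{p^{i-1}},y]$ to a single commutator of $\gamma$-weight $p^{i-1}+1\ge i+1$, and the correction subgroup $K(x,[x,y])$ is absorbed as well (it is generated by $p$-power parts of commutators lying in $\gamma_2(G_k)$, hence trivial, together with commutators of $\gamma$-weight $\ge p^{i-1}+2$), so that $[P_i(G_k),G_k]=\gamma_{i+1}(G_k)$. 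Similarly~\eqref{equ:commutator-formula-1} together with $\gamma_2(G_k)^p=1$ gives $(x^{p^{i-1}\alpha}c)^p\in x^{p^i\alpha}\gamma_{i+1}(G_k)$ for $c\in\gamma_i(G_k)$, whence $P_i(G_k)^p\subseteq\langle x^{p^i}\rangle\gamma_{i+1}(G_k)$; the reverse inclusions $x^{p^i}\in P_i(G_k)^p$ and $\gamma_{i+1}(G_k)=[\gamma_i(G_k),G_k]\subseteq[P_i(G_k),G_k]$ are clear, so $P_{i+1}(G_k)=\langle x^{p^i}\rangle\gamma_{i+1}(G_k)$. Since $x$ has order $p^{k+1}$ (from the semidirect realisation of $G_k$ in the proof of Lemma~\ref{lem:order-Gk}), $x^{p^{p^k-1}}=1$, so $P_{p^k}(G_k)=\gamma_{p^k}(G_k)\ne1$ while $P_{p^k+1}(G_k)=\gamma_{p^k}(G_k)^p\gamma_{p^k+1}(G_k)=1$; thus the lower $p$-series has length $p^k$.

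To obtain the explicit description, observe that $x^{p^{i-1}}\ne1$ exactly when $i\le k+1$; hence $P_i(G_k)=\gamma_i(G_k)$ for $i>k+1$, and inserting the generators of $\gamma_i(G_k)$ modulo $\gamma_{i+1}(G_k)$ from Proposition~\ref{pro:lower-central-Gk} (one for $i$ even, two for $i$ odd) produces the displayed generating sets. For the isomorphism types one uses that every section $P_i(G_k)/P_{i+1}(G_k)$ is elementary abelian, so only its $\log_p$-order matters; using $\langle x^{p^{i-1}}\rangle\cap\gamma_i(G_k)=1$ for $3\le i\le k+1$ — which reduces via the wreath quotient $W_k$ to $\langle x^{p^k}\rangle\cap\gamma_i(G_k)=1$, clear from the retraction $G_k=\langle x\rangle\ltimes\widetilde M\twoheadrightarrow\langle x\rangle\cong C_{p^{k+1}}$ of Lemma~\ref{lem:order-Gk} (or from Corollary~\ref{cor:gamma-central-part}) — one gets $\log_p\lvert P_i(G_k)/P_{i+1}(G_k)\rvert=\log_p\lvert\gamma_i(G_k)/\gamma_{i+1}(G_k)\rvert+\varepsilon_i$ with $\varepsilon_i=1$ for $i\le k+1$ and $\varepsilon_i=0$ otherwise, which via Proposition~\ref{pro:lower-central-Gk} yields the four cases $C_p,\,C_p\times C_p,\,C_p\times C_p\times C_p$. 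The value $P_2(G_k)/P_3(G_k)\cong C_p\times C_p\times C_p$ is obtained the same way, the third $C_p$-factor coming from the central element $y^p\in Z_k$, which is independent of $x^{p^k}$ and of $[H_k,H_k]$ by Lemma~\ref{lem:order-Gk}.

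The main obstacle is the commutator bookkeeping in the inductive step: one has to be sure that $[x^{p^{i-1}},y]$, the correction subgroup $K(x,[x,y])$, and the cross terms in $(x^{p^{i-1}\alpha}c)^p$ all land in $\gamma_{i+1}(G_k)$ rather than merely in an early term such as $\gamma_3(G_k)$. This rests on the two facts that $\gamma_2(G_k)$ has exponent $p$ and that $p^{i-1}\ge i$, which together force a single surviving commutator of $\gamma$-weight $p^{i-1}+1$ to control the entire expression; once this is in place, the remainder is a routine order count against Proposition~\ref{pro:lower-central-Gk}, with the only delicate point being the separate accounting of the central generators $x^{p^k}$ and $y^p$ of $Z_k$ near the bottom of the series.
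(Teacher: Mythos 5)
Your proposal is correct and follows essentially the same route as the paper: both reduce the corollary to the identity $P_i(G_k)=\langle x^{p^{i-1}}\rangle\gamma_i(G_k)$, established via the key observation that $[x^{p^{i-1}},y]\equiv[x,y]^{p^{i-1}}=1$ modulo $\gamma_{p^{i-1}+1}(G_k)\subseteq\gamma_{i+1}(G_k)$ (Lemma~\ref{lem:gamma-2-exp-p} plus Proposition~\ref{pro:standard-commutator-id}), and then read off the generators and ranks from Proposition~\ref{pro:lower-central-Gk}. You merely spell out the induction on $P_{i+1}(G_k)=P_i(G_k)^p[P_i(G_k),G_k]$, the absorption of the correction subgroups, and the order count via $\langle x^{p^{i-1}}\rangle\cap\gamma_i(G_k)=1$, all of which the paper leaves implicit.
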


\begin{proof}
  The descriptions of $G_k / P_2(G_k)$ and $P_2(G_k) / P_3(G_k)$ are
  straightforward.  Let $i \ge 3$.  Clearly,
  $P_i(G_k) \supseteq \langle x^{p^{i-1}} \rangle \gamma_i(G_k)$.  In
  view of Proposition~\ref{pro:lower-central-Gk}, it suffices to prove
  that $x^{p^{i-1}}$ is central modulo~$\gamma_{i+1}(G_k)$.  Indeed,
  from Lemma~\ref{lem:gamma-2-exp-p} and
  Proposition~\ref{pro:standard-commutator-id} (recall that $p>2$) we
  obtain
  \[
  [x^{p^{i-1}},y] \equiv [x,y]^{p^{i-1}} 
    = 1 
  \quad \text{modulo } \gamma_{p^{i-1}+1}(G_k) \subseteq
  \gamma_{i+1}(G_k). \qedhere
  \]
\end{proof}

\begin{corollary}\label{cor:dim-subgroup}
  The dimension subgroup series of $G_k$ has length~$p^k$.  For
  $1\le i \le p^k$, the $i$th term is
  $D_i(G_k) = G_k^{\, p^{l(i)}} \gamma_i(G_k)$, where
  $l(i) = \lceil\log_p i \rceil$.

  Furthermore, if $i$ is not a power of~$p$, equivalently if
  $l(i+1) = l(i)$, then
  $D_i(G_k)/D_{i+1}(G_k) \cong \gamma_i(G_k) / \gamma_{i+1}(G_k)$ so that
  \[
  D_i(G_k) =
  \begin{cases}
    \langle [y,x,\overset{i-1}{\ldots},x] \rangle  D_{i+1}(G_k) &
    \text{if $i\equiv_2 0$,} \\
    \langle [y,x,\overset{i-1}{\ldots},x],
    [y,x,\overset{i-2}{\ldots},x,y] \rangle D_{i+1}(G_k) & \text{if $i\equiv_2 1$,} 
  \end{cases}
  \]
  with
  \[
  D_i(G_k)/D_{i+1}(G_k) \cong
  \begin{cases}
    C_p & \text{if $i\equiv_2 0$,} \\
    C_p \times C_p & \text{if $i \equiv_2 1$} 
 \end{cases}
  \]
  whereas if $i = p^l$ is a power of~$p$, equivalently if
  $ l(i+1) = l + 1$ for $l = l(i)$, then 
  $D_i(G_k)/D_{i+1}(G_k) \cong \langle x^{p^l} \rangle / \langle
  x^{p^{l+1}} \rangle \times \langle y^{p^l} \rangle / \langle
  y^{p^{l+1}} \rangle \times \gamma_i(G_k) / \gamma_{i+1}(G_k)$ so that
  \begin{align*}
    & D_1(G_k) = \langle x,y \rangle D_2(G_k), \\
    & D_p(G_k) = \langle
      x^p,y^p, [y,x,\overset{p-1}{\ldots},x],
      [y,x,\overset{p-2}{\ldots},x,y] \rangle D_{p+1}(G_k), \\
    & D_i(G_k) = \langle x^{p^l}, [y,x,\overset{i-1}{\ldots},x],
      [y,x,\overset{i-2}{\ldots},x,y] \rangle D_{i+1}(G_k)
  \end{align*}
  with
  \[
  D_i(G_k)/D_{i+1}(G_k)\cong
  \begin{cases}
    C_p \times C_p  & \text{if $i=1$, equivalently if $l = 0$,} \\
    C_p \times C_p \times  C_p  \times C_p & \text{if $i=p$,
      equivalently if $l = 1$,} \\
    C_p \times C_p \times C_p & \text{if $i = p^l$ with $2 \le l \le k$.}
  \end{cases}
  \]
  
  In particular, for $p^{k-1}+1 \le i \le p^k$ and thus $l(i)=k$,
  \[
  D_i(G_k) = G_k^{\, p^k} \gamma_i(G_k) = \langle x^{p^k} \rangle \;
  \gamma_i(G_k),
  \]
  so that 
  \[
  \log_p\lvert D_i(G_k) \rvert = \log_p\lvert \gamma_i(G_k) \rvert +1.
  \]
\end{corollary}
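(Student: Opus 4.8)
\emph{Proof sketch.} The plan is to reduce the recursively defined modular dimension subgroups to the lower central series by means of the Jennings--Lazard closed formula $D_i(G_k)=\prod_{jp^m\ge i}\gamma_j(G_k)^{p^m}$ (see, e.g., \cite[Thm.~11.2]{DDMS99}). Since $\gamma_2(G_k)$ has exponent $p$ by Lemma~\ref{lem:gamma-2-exp-p}, every factor $\gamma_j(G_k)^{p^m}$ with $j\ge 2$ and $m\ge 1$ is trivial, while the factors with $j=1$ form the descending chain $G_k=G_k^{\,p^0}\supseteq G_k^{\,p^1}\supseteq\ldots$; hence the product collapses to $D_i(G_k)=G_k^{\,p^{l(i)}}\gamma_i(G_k)$ with $l(i)=\lceil\log_p i\rceil$, which is the structural heart of the statement. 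For the length, recall that $G_k$ is nilpotent of class $p^k$ (Lemma~\ref{lem:gamma-m-modulo-gamma-m+1}) and that $G_k^{\,p^k}\neq 1$ is elementary abelian (Proposition~\ref{pro:index-Gk-Gk-hoch-p}); thus $D_{p^k}(G_k)\supseteq G_k^{\,p^k}\neq 1$, whereas $D_{p^k+1}(G_k)=G_k^{\,p^{k+1}}\gamma_{p^k+1}(G_k)=1$ because $G_k^{\,p^{k+1}}=1$ and $\gamma_{p^k+1}(G_k)=1$.

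For the successive quotients I would split according to whether $i$ is a power of $p$. If $i$ is not a power of $p$, then $p^{l-1}<i<p^l$ for $l=l(i)=l(i+1)$, so $\gamma_{p^l}(G_k)\subseteq\gamma_{i+1}(G_k)$, and Lemma~\ref{lem:contained} (together with $x^{p^l},y^{p^l}\in G_k^{\,p^l}$) gives $D_i(G_k)=\langle x^{p^l}\rangle\gamma_i(G_k)$ for $l\ge 2$ and $D_i(G_k)=\langle x^p,y^p\rangle\gamma_i(G_k)$ for $l=1$, and likewise for $D_{i+1}(G_k)$. Since $\langle x\rangle\cap\gamma_2(G_k)=1$ — which follows from $G_k/\gamma_2(G_k)\cong C_{p^{k+1}}\times C_{p^2}$ in Proposition~\ref{pro:lower-central-Gk} and the order of $x$ — and similarly $\langle x^p,y^p\rangle\cap\gamma_2(G_k)=1$, the $p$-power parts cancel in the quotient and $D_i(G_k)/D_{i+1}(G_k)\cong\gamma_i(G_k)/\gamma_{i+1}(G_k)$, whose structure, and the displayed generators of $D_i(G_k)$ (obtained by absorbing the $p$-power generators into $D_{i+1}(G_k)$), are read off from Proposition~\ref{pro:lower-central-Gk}.

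If $i=p^l$ is a power of $p$, then $l(i+1)=l+1$, and the same identifications yield $D_{p^l}(G_k)=\langle x^{p^l}\rangle\gamma_{p^l}(G_k)$ for $l\ge 2$, $D_p(G_k)=\langle x^p,y^p\rangle\gamma_p(G_k)$, $D_1(G_k)=G_k$, while $D_{p^l+1}(G_k)=\langle x^{p^{l+1}}\rangle\gamma_{p^l+1}(G_k)$ for $l\ge 1$ and $D_2(G_k)=\Phi(G_k)$. Passing to $G_k/\gamma_{p^l+1}(G_k)$, where $x^{p^l}$ and $y^p$ are central (Corollary~\ref{cor:lower-p-central-Gk}, or directly from Proposition~\ref{pro:standard-commutator-id} and Lemma~\ref{lem:gamma-2-exp-p}) and $\langle x^{p^l}\rangle\cap\gamma_{p^l}(G_k)=1$, one obtains an internal direct product decomposition and hence $D_{p^l}(G_k)/D_{p^l+1}(G_k)\cong\langle x^{p^l}\rangle/\langle x^{p^{l+1}}\rangle\times\langle y^{p^l}\rangle/\langle y^{p^{l+1}}\rangle\times\gamma_{p^l}(G_k)/\gamma_{p^l+1}(G_k)$; the case $i=1$ is just $G_k/\Phi(G_k)\cong C_p\times C_p$, since $G_k$ is $2$-generated. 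Counting ranks via Proposition~\ref{pro:lower-central-Gk} gives the listed isomorphism types.

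Finally, for the ``in particular'' clause I may assume $k\ge 2$ (as in Proposition~\ref{pro:index-Gk-Gk-hoch-p}). For $p^{k-1}+1\le i\le p^k$ we have $l(i)=k$, so $D_i(G_k)=G_k^{\,p^k}\gamma_i(G_k)$; Lemma~\ref{lem:contained} gives $G_k^{\,p^k}\subseteq\langle x^{p^k}\rangle\gamma_{p^k}(G_k)\subseteq\langle x^{p^k}\rangle\gamma_i(G_k)$, and $x^{p^k}\in G_k^{\,p^k}$ forces $D_i(G_k)=\langle x^{p^k}\rangle\gamma_i(G_k)$; then $\langle x^{p^k}\rangle\cap\gamma_i(G_k)\subseteq\langle x\rangle\cap\gamma_2(G_k)=1$ yields $\log_p\lvert D_i(G_k)\rvert=\log_p\lvert\gamma_i(G_k)\rvert+1$. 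I expect the main obstacle to be the bookkeeping in the third paragraph: correctly identifying $D_i(G_k)$ and $D_{i+1}(G_k)$ as explicit subgroups of the shape $\langle x^{p^{l}}\rangle\gamma_\bullet(G_k)$ (respectively $\langle x^p,y^p\rangle\gamma_\bullet(G_k)$), tracking how the $p$-power generators $x^{p^l}$, $y^{p^l}$ furnished by Lemma~\ref{lem:contained} get absorbed into the next term, and handling the exceptional small indices $l\in\{0,1\}$ and $i\in\{1,p\}$ without error.
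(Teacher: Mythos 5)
Your argument is correct and follows essentially the same route as the paper: the closed formula $D_i(G_k)=G_k^{\,p^{l(i)}}\gamma_i(G_k)$ via \cite[Thm.~11.2]{DDMS99} together with Lemma~\ref{lem:gamma-2-exp-p}, the identification of the $p$-power parts via Lemma~\ref{lem:contained}, and the reading off of the factors from Proposition~\ref{pro:lower-central-Gk}. The only (immaterial) difference is that at $i=p^l$ you verify the direct-product decomposition by explicit centrality and trivial-intersection checks, whereas the paper first computes the factor modulo $H_k$ and then compares orders.
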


\begin{proof}
  For $i \in \mathbb{N}$ write $l(i) = \lceil\log_p i \rceil$. From
  \cite[Thm.~11.2]{DDMS99} and Lemma~\ref{lem:gamma-2-exp-p} we obtain
  $D_i(G_k) = G_k^{\, p^{l(i)}} \gamma_i(G_k)$.  In particular,
  $D_i(G_k) = 1$ for $i > p^k$, by
  Proposition~\ref{pro:lower-central-Gk} and
  Corollary~\ref{cor:lower-p-central-Gk}.

  Now suppose that $1\le i \le p^k$ and put $l = l(i)$.  From
  Lemma~\ref{lem:contained} we observe that
  $G_k^{\, p^l} \cap \gamma_i(G_k) \subseteq \gamma_{p^l}(G_k)$.  If
  $l(i+1) = l$ then  $\gamma_{p^l}(G_k) \subseteq
  \gamma_{i+1}(G_k)$, and hence
  \begin{align*}
    D_i(G_k)/D_{i+1}(G_k) & = G_k^{\, p^l}
                            \gamma_i(G_k)/G_k^{\, p^l} \gamma_{i+1}(G_k)\\
                          & \cong
                            \gamma_i(G_k) / (G_k^{\, p^l} \cap
                            \gamma_i(G_k)) \gamma_{i+1}(G_k)\\  
                          &\cong \gamma_i(G_l) / \gamma_{i+1}(G_l).
  \end{align*}

  Now suppose that $l(i+1) = l+1$, equivalently $i = p^l$.  We observe
  that, modulo $H_k$, the $i$th factor of the dimension subgroup
 series is
  \[
  D_i(G_k) H_k / D_{i+1}(G_k) H_k = \langle x^{p^l} \rangle H_k /
  \langle x^{p^{l+1}} \rangle H_k \cong C_p.
  \]
  Comparing with the overall order of~$G_k$, conveniently implicit in
  Corollary~\ref{cor:lower-p-central-Gk}, we deduce that
  \begin{align*} 
    D_i(G_k)/D_{i+1}(G_k) & = G_k^{\, p^l} \gamma_i(G_k) / G_k^{\, p^{l+1}}
                            \gamma_{i+1}(G_k) \\
                          & = \langle x^{p^l}, y^{p^l}\rangle \gamma_i(G_l) / \langle
                            x^{p^{l+1}}, y^{p^{l+1}} \rangle \gamma_{i+1}(G_l) \\
                          & \cong \langle x^{p^l} \rangle / \langle
                            x^{p^{l+1}} \rangle \times \langle y^{p^l} \rangle / \langle
                            y^{p^{l+1}} \rangle \times \gamma_i(G_l) / \gamma_{i+1}(G_l).
  \end{align*}
  All remaining assertions follow readily from
  Proposition~\ref{pro:lower-central-Gk}.
  %
 \end{proof}



\begin{proposition}\label{pro:Frattini-Gk}
  The Frattini series of $G_k$ has length $k+2$ and satisfies:
  \begin{align*}
    & G_k=\Phi_0(G_k)=\langle x,y\rangle \Phi_1(G_k) \quad
      \text{with}\quad G_k/\Phi_1(G_k) \cong C_p \times C_p, \\
    & \Phi_1(G_k) = \langle x^p,y^p, [y,x], [y,x,x],\ldots,
      [y,x,\overset{p}{\ldots},x], [y,x,y] \rangle\Phi_2(G_k) \\
    & \qquad \text{with}\quad  \Phi_1(G_k)/\Phi_2(G_k) \cong C_p^{\, p+3}, 
  \end{align*}
  and, for $2\le i\le k$, the $i$th term is
  \begin{align*}
    & \Phi_i(G_k)=\langle x^{p^i},[y,x,
      \overset{\nu(i)}{\ldots}, x], [y,x,
      \overset{\nu(i)+1}{\ldots}, x], \ldots, [y,x,
      \overset{\nu(i+1)-1}{\ldots},
      x], \\
    & \qquad  [y,x, \overset{2\nu(i-1)+1}{\ldots}, x,y], [y,x,
      \overset{2\nu(i-1)+3}{\ldots}, x,y],
      \ldots, [y,x, \overset{2\nu(i)-1}{\ldots},
      x,y]\rangle \Phi_{i+1}(G_k) \\
    & \quad \text{with} \quad \Phi_i(G_k)/\Phi_{i+1}(G_k) \cong
      \begin{cases}
        C_p^{\, p^i+p^{i-1}+1} & \text{ for $i \ne k$,}\\
        C_p^{\, p^k+1-(p^{k-1}-1)/(p-1)} & \text{ for $i=k$,}
      \end{cases}
  \end{align*}
  where
  \[
  \nu(j) = \min \big\{ \nicefrac{(p^j-1)}{(p-1)}, p^k \big\} =
  \begin{cases}
    \nicefrac{(p^j-1)}{(p-1)} & \text{for $1 \le j \le k$,} \\
    p^k & \text{for $j = k+1$;}
  \end{cases}
  \]
  lastly,
  \begin{align*}
    & \Phi_{k+1}(G_k) = \langle [y,x,
      \overset{2\nu(k)+1}{\ldots}, x,y], [y,x,
      \overset{2\nu(k)+3}{\ldots}, x,y], \ldots, [y,x,
      \overset{p^k-2}{\ldots}, x,y]\rangle \\
    & \quad
      \text{with} \quad \Phi_{k+1}(G_k)\cong C_p^{\,
      (p^{k+1}-3p^k-p+3)/(2(p-1))}.
  \end{align*}
\end{proposition}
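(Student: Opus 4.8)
The plan is to induct on $i$, computing $\Phi_{i+1}(G_k) = \Phi_i(G_k)^p\,[\Phi_i(G_k),\Phi_i(G_k)]$ from an explicit closed form for $\Phi_i(G_k)$. Concretely, I aim to prove that, with the convention $\nu(0)=0$,
\[
\Phi_i(G_k) = \langle x^{p^i} \rangle \, \gamma_{\nu(i)+1}(G_k) \, \bigl( \gamma_{2\nu(i-1)+2}(G_k) \cap Z_k \bigr) \qquad \text{for } 1 \le i \le k,
\]
except that for $i=1$ the factor $\langle y^p \rangle$ must be inserted as well (since $y^{p^2}=1$, this only affects the first term), and that $\Phi_{k+1}(G_k) = \gamma_{2\nu(k)+2}(G_k)\cap Z_k$. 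Granting this, Proposition~\ref{pro:lower-central-Gk} and Corollary~\ref{cor:gamma-central-part} — which describe $\gamma_m(G_k)/\gamma_{m+1}(G_k)$ and $\gamma_m(G_k)\cap Z_k$ explicitly — let one read off the four displayed generating sets, because the generators of $\Phi_i(G_k)$ not lying in $\Phi_{i+1}(G_k)$ are precisely $x^{p^i}$, the $[y,x,\overset{m}{\ldots},x]$ with $\nu(i)\le m\le\nu(i+1)-1$, and the $[y,x,\overset{2l-1}{\ldots},x,y]$ with $\nu(i-1)+1\le l\le\nu(i)$; the orders and ranks (including the exceptional shapes at $i=0,1,k,k+1$) follow by counting, and since $\Phi_{k+1}(G_k)\subseteq Z_k$ is central of exponent $p$ we get $\Phi_{k+2}(G_k)=1$, so the series has length $k+2$. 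The case $i=0$ is just that $G_k$ is $2$-generated.

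The base case $i=1$ is $\Phi_1(G_k)=\Phi(G_k)=G_k^{\,p}\gamma_2(G_k)$, and Lemma~\ref{lem:contained} gives $G_k^{\,p}\subseteq\langle x^p,y^p\rangle\gamma_p(G_k)$, whence $\Phi_1(G_k)=\langle x^p,y^p\rangle\gamma_2(G_k)$. For the inductive step I use that $Z_k$ is central of exponent $p$ (Lemma~\ref{lem:centre-of-G}) and that $\gamma_2(G_k)$ has exponent $p$ (Lemma~\ref{lem:gamma-2-exp-p}), so the collapsed collection formulae~\eqref{equ:commutator-formula-3} apply throughout $G_k$. Writing a typical element of $\Phi_i(G_k)$ as $x^{p^i m}cz$ with $c\in\gamma_{\nu(i)+1}(G_k)$ and $z\in Z_k$ central, \eqref{equ:commutator-formula-3} yields $(x^{p^i m}c)^p\equiv x^{p^{i+1}m}\,[c,x^{p^i m},\overset{p-1}{\ldots},x^{p^i m}]$; each conjugation by $x^{p^i}$ raises commutator weight by $p^i$ (again by \eqref{equ:commutator-formula-3}) and the weight of $c$ is at least $\nu(i)+1$, so this last commutator has weight at least $\nu(i)+1+(p-1)p^i\ge\nu(i+1)+1$, giving $\Phi_i(G_k)^p\subseteq\langle x^{p^{i+1}}\rangle\gamma_{\nu(i+1)+1}(G_k)$ with $x^{p^{i+1}}=(x^{p^i})^p\in\Phi_i(G_k)^p$. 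Because $Z_k$ is central, $[\Phi_i(G_k),\Phi_i(G_k)]$ is generated as a normal subgroup by $[x^{p^i},\gamma_{\nu(i)+1}(G_k)]$ and $[\gamma_{\nu(i)+1}(G_k),\gamma_{\nu(i)+1}(G_k)]$; the first of these is generated by the elements $[x^{p^i},[y,x,\overset{m}{\ldots},x]]\equiv[y,x,\overset{m+p^i}{\ldots},x]^{\pm1}$ with $m\ge\nu(i)$, and a downward induction on weight — absorbing higher‑weight corrections into $\gamma_{\nu(i+1)+1}(G_k)$ and $\gamma_{2\nu(i)+2}(G_k)\cap Z_k$, and using the inequality $p^i\ge\nu(i)+1$ (valid for $p\ge3$) to see that $\gamma_{\nu(i+1)+1}(G_k)\cap Z_k\subseteq\gamma_{2\nu(i)+2}(G_k)\cap Z_k$ — yields $\gamma_{\nu(i+1)+1}(G_k)\subseteq\Phi_{i+1}(G_k)$. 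Assembling these three contributions gives the claimed formula for $\Phi_{i+1}(G_k)$, modulo one further ingredient.

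That ingredient is the main obstacle: identifying $[\gamma_{\nu(i)+1}(G_k),\gamma_{\nu(i)+1}(G_k)]$ precisely as $\gamma_{2\nu(i)+2}(G_k)\cap Z_k$. One inclusion is a routine weight count, but the reverse requires showing that the central commutators $[[y,x,\overset{m}{\ldots},x],[y,x,\overset{m'}{\ldots},x]]$ with $m,m'\ge\nu(i)$ span the \emph{whole} of $\gamma_{2\nu(i)+2}(G_k)\cap Z_k$ rather than a proper subgroup — these low‑weight central elements are exactly what makes $\Phi_{i+1}(G_k)$ larger on $Z_k$ than the Frattini series of $W_k=G_k/Z_k$ (Proposition~\ref{pro:series-Wk}(3)) would predict, so they cannot be avoided. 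I would use the expansion of $[y,x,\overset{m}{\ldots},x]$ as $\prod_i y_i^{\pm\binom{m}{i}}$ modulo $Z_k$ recorded in the proof of Proposition~\ref{pro:lower-central-Gk} to rewrite each such commutator as an explicit $\mathbb{F}_p$‑linear combination of the basis elements $e_i=[y_0,y_i]$ of $[H_k,H_k]$ (equivalently of the $[y,x,\overset{2l-1}{\ldots},x,y]$), and then run a triangularity/Vandermonde‑type nondegeneracy argument — in the spirit of the computation of $v$ in Lemma~\ref{lem:contains} and the independence arguments underlying Corollary~\ref{cor:gamma-central-part} — to pin down the span.

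Once this is in place, the remaining assertions of the proposition — the four generating sets, the precise orders $\Phi_i(G_k)/\Phi_{i+1}(G_k)\cong C_p^{p^i+p^{i-1}+1}$ (with the modifications at $i=1$, $i=k$, $i=k+1$), and the length $k+2$ — follow by matching the ranks of $\gamma_m(G_k)/\gamma_{m+1}(G_k)$ and of $\gamma_m(G_k)\cap Z_k$ from Proposition~\ref{pro:lower-central-Gk} and Corollary~\ref{cor:gamma-central-part} against the total $\log_p\lvert G_k\rvert=\tfrac12(3p^k+2k+3)$ of Lemma~\ref{lem:order-Gk}.
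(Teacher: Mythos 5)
Your proposal is correct in substance and arrives at the same closed form as the paper: your candidate $\langle x^{p^i}\rangle\,\gamma_{\nu(i)+1}(G_k)\,\bigl(\gamma_{2\nu(i-1)+2}(G_k)\cap Z_k\bigr)$ is exactly the subgroup $L_i$ that the paper builds from the displayed generators (via Proposition~\ref{pro:lower-central-Gk} and Corollary~\ref{cor:gamma-central-part}), and both arguments close the loop by matching the total of the generator counts against $\log_p\lvert G_k\rvert$ from Lemma~\ref{lem:order-Gk}. The organization differs: you run a direct induction computing $\Phi_{i+1}=\Phi_i^{\,p}[\Phi_i,\Phi_i]$ from the closed form, whereas the paper proves the two inclusions separately --- $\Phi_i\subseteq L_i$ because the chain $(L_i)$ has elementary abelian successive quotients, and $L_i\subseteq\Phi_i$ by exhibiting the normal generators of $L_i$ inside $\Phi_i$ --- which amounts to essentially the same computations packaged differently. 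The one genuinely different ingredient is your treatment of the step you rightly single out as the main obstacle, namely that the central commutators $[c_a,c_b]$ with $a,b\ge\nu(i)+1$ generate all of $\gamma_{2\nu(i)+2}(G_k)\cap Z_k$. You propose expanding each $c_m$ in the $y_i$'s and running an $\mathbb{F}_p$-linear nondegeneracy argument in the basis $e_i=[y_0,y_i]$; the paper instead derives, from the Hall--Witt identity, the relation $[c_a,c_b]\equiv z_{a+b}^{\,(-1)^{b-1}}$ modulo $\gamma_{a+b+1}(G_k)$ (equation~\eqref{equ:comm-ci-cj}), after which a reverse induction on weight using Corollary~\ref{cor:gamma-central-part} finishes immediately. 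The Hall--Witt route buys you the triangularity for free, with the commutator-weight filtration as the ordering, and avoids having to control the non-trivial change of basis between the $z_l$ and the $e_j$; I would substitute it for the Vandermonde computation, but your plan is workable. The remaining points of your outline --- the weight estimates $\nu(i)+(p-1)p^i\ge\nu(i+1)$ and $\nu(i+1)+1\ge 2\nu(i)+2$, the reduction of $[\Phi_i,\Phi_i]$ to $[x^{p^i},\gamma_{\nu(i)+1}(G_k)]$ and $[\gamma_{\nu(i)+1}(G_k),\gamma_{\nu(i)+1}(G_k)]$ using centrality of the remaining factor, and the degenerate endpoints at $i=1$, $i=k$, $i=k+1$ --- match the paper's computations.
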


\begin{proof}
  For ease of notation we set $c_1=y$ and, for $i \ge 2$,
  \[
  c_i = [y,x,\overset{i-1}{\ldots},x] \qquad \text{and} \qquad z_i =
  [c_{i-1},y] = [y,x,\overset{i-2}{\ldots},x,y].
  \]
  From Lemma~\ref{lem:gamma-2-exp-p} we observe that
  $c_i^{\, p} = z_i^{\, p} = 1$ for $i \ge 2$; furthermore, the
  elements $z_i \in [H_k,H_k] \subseteq Z_k$ are central in~$G_k$.  We
  claim that
  \begin{equation} \label{equ:comm-ci-cj}
    [c_i,c_j] \equiv z_{i+j}^{\, (-1)^{j-1}} \;\mathrm{mod}\;
      \gamma_{i+j+1}(G_k) \qquad \text{for $i > j \ge 1$.}  
  \end{equation}
  Indeed, $[c_i,c_1] = [c_i,y] = z_{i+1}$, and, modulo
  $\gamma_{i+j+1}(G_k)$, the Hall--Witt identity gives
  \[
  1 \equiv [c_i,c_{j-1},x] [c_{j-1},x,c_i] [x,c_i,c_{j-1}] \equiv [c_j,c_i]
  [c_{i+1},c_{j-1}]^{-1},
  \]
  hence $[c_i,c_j] \equiv [c_{i+1},c_{j-1}]^{-1}$ from which the result
  follows by induction.

  We use the generators specified in the statement of the proposition
  to define an ascending chain
  $1 = L_{k+2} \le L_{k+1} \le \ldots \le L_1 \le L_0 = G_k$ so that
  each $L_i$ is the desired candidate for $\Phi_i(G_k)$.  For
  $1 \le i \le k+1$ we deduce from
  Proposition~\ref{pro:lower-central-Gk} and
  Corollary~\ref{cor:gamma-central-part} that
  \[
  L_i = \langle x^{p^i} \rangle M_i \quad \text{with} \quad M_i =
  \langle c_{\nu(i)+1} \rangle \gamma_{\nu(i)+2}(G_k) C_i
  \trianglelefteq G_k,
  \]
  where
  $C_i = \langle y^{p^i} \rangle \times \langle z_j \mid 2\nu(i-1)+3
  \le j \le p^k \text{ and } j \equiv_2 1 \rangle$
  is central in~$G_k$.  (Note that the factor
  $\langle y^{p^i} \rangle$ vanishes if $i \ge 2$.)  Applying 
    \eqref{equ:commutator-formula-3}, based on
    Proposition~\ref{pro:standard-commutator-id} and
    Lemma~\ref{lem:gamma-2-exp-p}, 
  we see that
  $[x^{p^i},G_k] = [x^{p^i},H_k] \subseteq \gamma_{p^i+1}(G_k)$, hence
  $L_i \trianglelefteq G_k$ for $1 \le i \le k+1$.  Using
  also~\eqref{equ:comm-ci-cj}, we see that the factor groups
  $L_i/L_{i+1}$ are elementary abelian for $0 \le i \le k+1$.  In
  particular, this shows that $\Phi_i(G_k) \subseteq L_i$ for
  $1 \le i \le k+2$.

  Clearly, for each $i \in \{0,\ldots,k+1\}$, the value of
  $\log_p \lvert L_i / L_{i+1} \rvert = d(L_i/L_{i+1})$ is bounded by
  the number of explicit generators used to define $L_i$ modulo
  $L_{i+1}$; these numbers are specified in the statement of the
  proposition and a routine summation shows that they add up to the
  logarithmic order $\log_p \lvert G_k \rvert$, as given in
  Lemma~\ref{lem:order-Gk}.  Therefore each $L_i/L_{i+1}$ has the
  expected rank and it suffices to show that
  $\Phi_i(G_k) \supseteq L_i$ for $1 \le i \le k+1$.

  Let $i \in \{1,\ldots,k+1\}$.  It is enough to show that the
  following elements which generate $L_i$ as a normal subgroup belong
  to~$\Phi_i(G_k)$:
  \[
  x^{p^i}, \quad c_{\nu(i)+1}, \qquad \text{and} \quad z_j \quad
  \text{for} \quad \text{$2\nu(i-1)+3 \le j \le p^k$ with
    $j \equiv_2 1$.}
  \]
  Clearly, $x^{p^i} \in \Phi_i(G_k)$ and, applying 
    \eqref{equ:commutator-formula-3}, based on
    Proposition~\ref{pro:standard-commutator-id} and
    Lemma~\ref{lem:gamma-2-exp-p}, 
  we see by induction on $i$ that
  \[
  c_{\nu(i)+1} = [y,x,\overset{\nu(i)}{\ldots},x] \equiv_{\Phi_i(G_k)}
  [y,x,x^p,\ldots,x^{p^{i-1}}] \equiv_{\Phi_i(G_k)} 1.
  \]
  Now let $2\nu(i-1)+3 \le j \le p^k$ with $j \equiv_2 1$.  By
  Corollary~\ref{cor:gamma-central-part} and reverse induction on $j$
  it suffices to show that $z_j$ is contained in $\Phi_i(G_k)$ modulo
  $\gamma_{j+1}(G_k)$.  This follows from \eqref{equ:comm-ci-cj}
  and the fact that
  $c_{\nu(i-1)+1}, c_{j-\nu(i-1)-1} \in \Phi_{i-1}(G_k)$ by induction
  on~$i$.
\end{proof}

Using Corollary~\ref{cor:general-normal-hspec}, we can now complete
the proof of Theorem~\ref{thm:main-thm}: it suffices to compute
$\hdim^{\mathcal{S}}_G(Z)$ and $\hdim^{\mathcal{S}}_G(H)$ for the
standard filtration series
$\mathcal{S} \in \{ \mathcal{L}, \mathcal{D}, \mathcal{F} \}$.

Corollary~\ref{cor:lower-p-central-Gk} implies
\begin{align}
  \hdim^{\mathcal{L}}_G(Z) & = \varliminf_{i \to \infty}
                             \frac{\log_p \lvert Z P_i(G) : P_i(G) \rvert}{\log_p \lvert G : P_i(G)
                             \rvert} = \lim_{i \to \infty}
                             \frac{\nicefrac{i}{2}}{\nicefrac{5i}{2}}
                             = \nicefrac{1}{5}, %
                             \label{equ:xi-for-L} \\
  \hdim^{\mathcal{L}}_G(H) & = \varliminf_{i \to \infty}
                             \frac{\log_p \lvert H P_i(G) : P_i(G) \rvert}{\log_p \lvert G : P_i(G)
                             \rvert} = \lim_{i \to \infty}
                             \frac{\nicefrac{3i}{2}}{\nicefrac{5i}{2}}
                             = \nicefrac{3}{5}. %
                             \label{equ:eta-for-L}
\end{align}

Corollary~\ref{cor:dim-subgroup} implies
\begin{align}
  \hdim^{\mathcal{D}}_G(Z) & = \varliminf_{i \to \infty}
                             \frac{\log_p \lvert Z D_i(G) : D_i(G) \rvert}{\log_p \lvert G : D_i(G)
                             \rvert} = \lim_{i \to \infty}
                             \frac{\nicefrac{i}{2}}{\nicefrac{3i}{2}}
                             = \nicefrac{1}{3}, %
                             \label{equ:xi-for-D} \\
  \hdim^{\mathcal{D}}_G(H) & = \varliminf_{i \to \infty}
                             \frac{\log_p \lvert H D_i(G) : D_i(G) \rvert}{\log_p \lvert G : D_i(G)
                             \rvert} = \lim_{i \to \infty}
                             \frac{\nicefrac{3i}{2}}{\nicefrac{3i}{2}}
                             = 1. \notag
\end{align}

Lastly, Proposition~\ref{pro:Frattini-Gk}
implies
\begin{align}
  \hdim^{\mathcal{F}}_G(Z) & = \varliminf_{i \to \infty}
                             \frac{\log_p \lvert Z \Phi_i(G) :
                             \Phi_i(G) \rvert}{\log_p \lvert G :
                             \Phi_i(G) \rvert} = \lim_{i \to \infty}
                             \frac{\sum_{j=1}^{i-1} p^{j-1}}{\sum_{j=1}^{i-1} (p^j 
                             + p^{j-1} +1)} = \nicefrac{1}{p+1}, %
\label{equ:xi-for-F} \\
  \hdim^{\mathcal{F}}_G(H) & = \varliminf_{i \to \infty}
                             \frac{\log_p \lvert H \Phi_i(G) :
                             \Phi_i(G) \rvert}{\log_p \lvert G :
                             \Phi_i(G) \rvert} = \lim_{i \to \infty}
                             \frac{\sum_{j=1}^{i-1} (p^j 
                             + p^{j-1})}{\sum_{j=1}^{i-1} (p^j 
                             + p^{j-1} +1)} = 1. \notag
\end{align}

\begin{remark} \label{Z-H-always_strong}
  From \eqref{equ:xi-for-P}, \eqref{equ:xi-for-L},
  \eqref{equ:eta-for-L}, \eqref{equ:xi-for-D}, \eqref{equ:xi-for-F}
  and the fact that subgroups of Hausdorff dimension~$1$ automatically
  have strong Hausdorff dimension we conclude that $Z$ and $H$ have
  strong Hausdorff dimension in~$G$ with respect to all standard
  filtration series $\mathcal{P}$, $\mathcal{D}$, $\mathcal{F}$ and
  $\mathcal{L}$.
\end{remark}


\section{The entire Hausdorff spectra of $G$ with respect to the
  standard filtration series} \label{sec:entire-spectrum}

We continue to use the notation set up in
Section~\ref{sec:presentation} to study and determine the entire
Hausdorff spectra of the pro-$p$ group~$G$, with respect to the
standard filtration series
$\mathcal{P}, \mathcal{D}, \mathcal{F}, \mathcal{L}$.

\begin{proof}[Proof of Theorem~\ref{thm:entire-spectrum}]
  As in Sections~\ref{sec:prelim} and~\ref{sec:presentation}, we write
  $W = G/Z \cong C_p \mathrel{\hat{\wr}} \mathbb{Z}_p$, and we denote
  by $\pi \colon G \rightarrow W$ the canonical projection with
  $\ker \pi = Z$.

  \medskip

  First suppose that $\mathcal{S}$ is one of the filtration series
  $\mathcal{P}, \mathcal{D}, \mathcal{F}$ on~$G$.  By
  Remark~\ref{Z-H-always_strong}, the group $H$ has strong Hausdorff
  dimension~$1$ in~$G$ with respect to~$\mathcal{S}$. As every finitely
  generated subgroup of $H$ is finite, it follows
  from~\cite[Thm.~5.4]{KlThZu19} that
  $\hspec^{\mathcal{S}}(G) = [0,1]$.

 
  It remains to pin down the Hausdorff spectrum of $G$ with respect to
  the lower $p$-series $\mathcal{L} \colon P_i(G)$,
  $i \in \mathbb{N}$, on~$G$.  By Remark~\ref{Z-H-always_strong}, the
  normal subgroups $Z, H \trianglelefteq_\mathrm{c} G$ have strong
  Hausdorff dimensions $\hdim^{\mathcal{L}}_G(Z) = \nicefrac{1}{5}$
  and $\hdim^{\mathcal{L}}_G(H) = \nicefrac{3}{5}$.  From
  Corollary~\ref{cor:abelian-section-interval},
  Lemma~\ref{lem:hdims-from-ses} and Corollary~\ref{cor:spectrum-W-L}
  we deduce that $\hspec^{\mathcal{L}}(G)$ contains
  \[
  S = [0,\nicefrac{3}{5}] \cup
  \{\nicefrac{3}{5} + \nicefrac{2m}{5p^n}\mid m, n \in\mathbb{N}_0
  \text{ with } \nicefrac{p^n}{2} < m\le p^n\}.
  \]
  Thus it suffices to show that
  \begin{equation} \label{equ:3/5-4/5} 
    (\nicefrac{3}{5},\nicefrac{4}{5}) \subseteq \hspec^{\mathcal{L}}(G)
    \subseteq  (\nicefrac{3}{5},\nicefrac{4}{5}) \cup S.
   \end{equation}
   
   First we prove the second inclusion.  Let $K \le_\mathrm{c} G$ be
   any closed subgroup with
   $\hdim^\mathcal{L}_G(K) > \nicefrac{3}{5}$.  In particular, this
   implies $K \not \subseteq H$ and hence $KH \le_\mathrm{o} G$.

   We denote by $\mathcal{L} \vert_H$ and $\mathcal{L} \vert_{H\pi}$
   the filtration series induced by $\mathcal{L}$ on $H$, via
   intersection, and on $H \pi = HZ/Z$, via subsequent reduction
   modulo~$Z$.  We write $\mathcal{L}$ for the filtration series
   $\mathcal{L} \vert_W $ induced on~$W = G/Z$, as it coincides with
   the lower $p$-series of the quotient group.  Using
   Corollary~\ref{cor:spectrum-W-L} and
   Lemma~\ref{lem:hdims-from-ses}, we see that $(K \cap H) \pi$ has
   strong Hausdorff dimension
  \[
  \alpha = \hdim^{\mathcal{L} \vert_{H\pi}}_{H \pi}((K \cap H)\pi) = 2
  \hdim^\mathcal{L}_W(K\pi) - 1 \in [0,1]
  \]
  in $H \pi$ with respect to~$\mathcal{L} \vert_{H\pi}$.  Applying
  Lemma~\ref{lem:hdims-from-ses} twice, we deduce that
  \begin{equation} \label{equ:twice}
    \begin{split}
      \hdim^\mathcal{L}_G(K) & = \tfrac{2}{5} + \tfrac{3}{5}
      \hdim^{\mathcal{L} \vert_H}_H(K \cap H)\\
      & \in \tfrac{2}{5} + \tfrac{3}{5} \left( \tfrac{2}{3}
        \hdim^{\mathcal{L} \vert_{H\pi}}_{H\pi} \big( (K \cap H)\pi
        \big) + [0,\nicefrac{1}{3}] \right) \\
      & = \tfrac{2}{5} (1 +\alpha) + [0,\nicefrac{1}{5}].
    \end{split}
  \end{equation}
  For $\alpha < \nicefrac{1}{2}$ we obtain
  $\hdim^\mathcal{L}_G(K) < \nicefrac{4}{5}$ and there is nothing
  further to prove.  Now suppose that $\alpha \ge \nicefrac{1}{2}$.
  It suffices to show that $K \cap Z \le_\mathrm{o} Z$ and hence
  $\hdim^\mathcal{L}_G(K \cap Z) = \nicefrac{1}{5}$: with this extra
  information we can refine the analysis in~\eqref{equ:twice} and use
  Corollary~\ref{cor:spectrum-W-L} once more to deduce that
  \[
  \hdim^\mathcal{L}_G(K) = \tfrac{2}{5} (1 +\alpha) + \tfrac{1}{5} =
  \tfrac{4}{5} \hdim^\mathcal{L}_W(K \pi) + \tfrac{1}{5} \in
  S.
  \]
  Let us prove that $K \cap Z \le_\mathrm{o} Z$.  As
  $KH \le_\mathrm{o} G$, we have $KH = \langle x^{p^n} \rangle H$,
  where $n = \log_p \lvert G : KH \rvert \in \mathbb{N}_0$.  Using
  Lemma~\ref{lem:hdims-from-ses}, we deduce from
  $\alpha \ge \nicefrac{1}{2}$ that
  \begin{equation} \label{equ:more-than-half} \hdim^\mathcal{L}_W((K
    \cap H) \pi) \ge \nicefrac{1}{4} = \tfrac{1}{2} \hdim^\mathcal{L}_W(H \pi).
  \end{equation}
  At this point it is useful to recall our analysis
  of~$\hspec^\mathcal{L}(W)$ in the proof of
  Theorem~\ref{thm:finitely-generated-spectrum} and also the
  computations carried out in the proof of
  Proposition~\ref{pro:Frattini-Gk}, involving the elements
  $c_i = [y,x,\overset{i-1}{\ldots},x]$ and~$z_i = [c_{i-1},y]$.  In
  particular, for $i \in \mathbb{N}$ with $i \ge 3$ we have
  \[
  (P_i(G) \cap H)\pi  = \langle c_j \mid j \ge i \rangle \pi
  \quad \text{and} \quad
  P_i(G) \cap Z = \langle z_j \mid  j \ge i \text{ and } j \equiv_2 1  \rangle;
  \]
  compare Corollary~\ref{cor:lower-p-central-Gk}.
  From~\eqref{equ:more-than-half} and the proof of
  Theorem~\ref{thm:finitely-generated-spectrum} we deduce that,
  subject to replacing $K$ by a suitable open subgroup
  $\widetilde{K} = K \cap \langle x^{p^{\widetilde{n}}} \rangle H$
  with $\widetilde{n} \ge n$ if necessary, we find $m \ge (p^n+1)/2$
  and $a_1, \ldots, a_m \in K \cap H$ so that
  \[
  (K \cap H) M / M = \langle a_1, \ldots, a_m \rangle M /M \cong
  C_p^{\, m}, \qquad \text{where $M = (P_{p^n+1}(G) \cap H) Z$,}
  \] 
  and the numbers
  \[
  d(j) = \max \{ i \in \mathbb{N} \mid a_j \in (P_i(G) \cap H)Z \},
  \quad 1 \le j \le m,
  \]
  form a strictly increasing sequence
  $1 \le d(1) < \ldots < d(m) < p^n$.  Commuting $a_1,\ldots,a_m$
  repeatedly with $x^{p^n}$, we see as in the proof of
  Theorem~\ref{thm:finitely-generated-spectrum} that
  \[
    \{ d(1), \ldots, d(m) \} + p^n \mathbb{N}_0
    \subseteq \{ i \in \mathbb{N} \mid \exists g \in K \cap H : g
    \equiv_{P_{i+1}(G) Z} c_i \}.
  \]
  For every $k \in \mathbb{N}$ with $k > p^n$ and $k \equiv_2 1$, the
  pigeonhole principle (Dirichlet's `Schubfachprinzip') yields
  $i,j \in \mathbb{N}$ with $i>j \ge 1$ and $i+j = k$, and we find
  $g_i, g_j \in K \cap H$ with $g_i \equiv_{P_{i+1}(G) Z} c_i$ and
  $g_j \equiv_{P_{j+1}(G) Z} c_j$ so that~\eqref{equ:comm-ci-cj} gives
  \[
  z_k \equiv_{P_{k+1}(G)} [c_i,c_j]^{(-1)^{j-1}} \equiv_{P_{k+1}(G)}
  [g_i,g_j]^{(-1)^{j-1}} \in K \cap Z.
  \]
  But this implies
  $K \cap Z \supseteq \langle z_j \mid j>p^n \text{ and } j \equiv_2 1
  \rangle = P_{p^n+1}(G) \cap Z$
  and thus $K \cap Z \le_\mathrm{o} Z$.  This concludes the proof
  of the second inclusion in~\eqref{equ:3/5-4/5}.

  Finally we prove the first inclusion in~\eqref{equ:3/5-4/5}.  Let
  $\xi  \in (\nicefrac{2}{5},\nicefrac{4}{5})$.  Choose $m,n \in
  \mathbb{N}$ such that $1 \le m < \nicefrac{p^n}{2}$ and
  \[
  \tfrac{1}{5} \big( 2 + \nicefrac{(4m -1)}{p^n} \big) \le \xi \le
  \tfrac{1}{5} \big( 3 + \nicefrac{2 m}{p^n} \big). 
  \] 
  Consider the group
  $K = \langle x^{p^n}, y_0, y_1, \ldots, y_{m-1} \rangle$.  Using the
  proof of Theorem~\ref{thm:finitely-generated-spectrum} and
  Lemma~\ref{lem:hdims-from-ses}, we show below that $K$ has Hausdorff
  dimension
  \begin{equation} \label{equ:K-strong-hdim}
    \begin{split}
      \qquad \hdim^\mathcal{L}_G(K) & = \tfrac{4}{5}
      \hdim^\mathcal{L}_W (K
      \pi) + \tfrac{1}{5} \hdim^{\mathcal{L} \vert_Z}_Z(K \cap Z) \\
      & = \big( \tfrac{2}{5} + \tfrac{2}{5} \tfrac{m}{p^n} \big) +
      \tfrac{1}{5} \tfrac{2m-1}{p^n} \\
      & = \tfrac{1}{5} \big( 2 + \nicefrac{(4m - 1)}{p^n} \big).
    \end{split}
  \end{equation}
  In a similar, but much more straightforward way, we see that $ZK$
  has strong Hausdorff dimension
  \[
  \hdim^\mathcal{L}_G(ZK) = \big( \tfrac{2}{5} + \tfrac{2}{5} \tfrac{m}{p^n} \big) +
      \tfrac{1}{5} = \tfrac{1}{5} \big( 3
  + \nicefrac{2 m}{p^n} \big).
  \]
  An application of~\cite[Thm.~5.4]{KlThZu19} yields
  $L \le_\mathrm{c} G$ with $K \le L \le ZK$ such that
  $\hdim^\mathcal{L}_G(L) = \xi$.

  The key to~\eqref{equ:K-strong-hdim} consists in showing that
  \begin{equation}\label{equ:the-key}
    \varliminf_{i \to \infty} \frac{\log_p \lvert K P_i(G) \cap Z : P_i(G) \cap Z
      \rvert}{\log_p \lvert Z : P_i(G) \cap Z \rvert} =
    \hdim_Z^{\mathcal{L} \vert_Z}(K \cap Z) = (2m-1)/p^n.
  \end{equation}
  First we examine the lower limit on the left-hand side, restricting
  to indices of the form $i = p^k +1$, $k \in \mathbb{N}$.  Let
  $i = p^k+1$, where $k \ge n$.  Recall that
    $G_k = G / \langle x^{p^{k+1}}, [x^{p^k},y] \rangle^G$ 
  and consider the canonical projection $\varrho_k \colon G \to G_k$,
  $g \mapsto \overline{g}$.  As before, we write $H_k = H \varrho_k$. 
    Furthermore, we observe that
    $Z_k = \langle \overline{x}^{p^k} \rangle Z \varrho_k$ with
    $\lvert Z_k : Z \varrho_k \rvert = p$. 
  By Corollary~\ref{cor:lower-p-central-Gk}, we have
  \[
  \lvert H_k : H_k \cap \underbrace{P_i(G_k)}_{=1} \rvert = \lvert H_k
  \rvert = \lvert   H : H \cap P_i(G) \rvert
  \]
  and hence
  \[
  \frac{\log_p \lvert K P_i(G) \cap Z : P_i(G) \cap Z \rvert}{\log_p
    \lvert Z : P_i(G) \cap Z \rvert} = \frac{\log_p \lvert K \varrho_k
    \cap  Z \varrho_k \rvert}{\log_p \lvert
     Z \varrho_k \rvert}.
  \]
  Observe that
  \[
  K \varrho_k \cap H_k = \langle \overline{y_j} \mid 0 \le j < p^k \text{ with } j
  \equiv_{p^n} 0,1, \ldots, m-1 \rangle.
  \]
  From Lemma~\ref{lem:order-Gk} we 
    see that $Z \varrho_k \cong C_p^{\, (p^k+1)/2}$ 
  and further we deduce that
  \begin{align*}
    K & \varrho_k \cap Z \varrho_k \\
      & =  \langle \{ \overline{y}^p \} 
        \cup \{ [ \overline{y_0}, \overline{ y_j}] \mid 0
        \le j < p^k, \; j \equiv_{p^n} 0, \pm 1,\ldots, \pm (m-1), \;  j \equiv_2 0 \}
        \rangle \\
      & \cong  C_p^{\, ((2m-1) p^{k-n} + 1)/2}.
  \end{align*}
  This yields
  \begin{align*}
    \varliminf_{i \to \infty} \frac{\log_p \lvert K P_i(G) \cap Z :
    P_i(G) \cap Z \rvert}{\log_p \lvert Z : P_i(G) \cap Z \rvert}
    & \le \varliminf_{k \to \infty} \frac{\log_p \lvert K \varrho_k \cap
     Z \varrho_k \rvert}{\log_p \lvert
       Z \varrho_k \rvert}\\
    & = \lim_{k \to \infty}  \frac{(2m-1) p^{k-n}
      +1}{p^k+1} = (2m-1)/p^n.
  \end{align*}
  In order to establish~\eqref{equ:the-key} it now suffices to prove
  that
  \begin{equation} \label{equ:last-reduction}
  \varliminf_{i \to \infty} \frac{\log_p \lvert (K \cap Z) (P_i(G)
    \cap Z) : P_i(G) \cap Z \rvert}{\log_p \lvert Z : P_i(G) \cap Z
    \rvert} \ge (2m-1)/p^n.
  \end{equation}
  Our analysis above yields
  \[
  K \cap Z = \langle \{ y^p\} \cup \{ [y_0, y_j] \mid j \in \mathbb{N}
  \text{ with } j \equiv_{p^n} 
  0, \pm 1,\ldots,\pm (m-1) \} \rangle.
  \]
  Setting
  \[
  L= \langle y_j \mid j \in \mathbb{N}_0 \text{ with } j \equiv_{p^n} 
  0, \pm 1, \ldots, \pm (m-1) \rangle Z,
  \]
  and recalling the notation $c_1 = y = y_0$, we conclude that
  \[
  K \cap Z \supseteq \{ [g,c_1] \mid g \in L \}.
  \] 

  Next we consider the set
  \[
  D = \{ j \in \mathbb{N} \mid \exists g \in L :  g \equiv_{P_{j+1}(G) Z} c_j \}.
  \]
  Each element $y_j$ can be written (modulo $Z$) as a product
  \[
  y_j \equiv_Z \prod_{k=0}^j c_{k+1}^{\, \beta(j,k)} \qquad
  \text{where $\beta(j,k) = \tbinom{j}{k}$,}
  \]
  using the elements $c_i = [y,x,\overset{i-1}{\ldots},x]$ introduced
  in the proof of Proposition~\ref{pro:Frattini-Gk}.  In this product
  decomposition, the exponents should be read modulo~$p$, and the
  elementary identity $(1+t)^{j + p^n} = (1+t)^j (1+t^{p^n})$ in
  $\mathbb{F}_p[\![t]\!]$ translates to
  \[
  y_j^{\, -1} y_{j + p^n} = y^{-x^j} y^{x^{j + p^n}} \equiv_Z \prod_{k=0}^j c_{k+1+p^n}^{\,
    \beta(j,k)}  \qquad \text{for all $j \in \mathbb{N}$;}
  \]
  compare~\eqref{equ:Wk-explicit}.  Inductively, we obtain
  \[
  D = D_0 + p^n \mathbb{N}_0 \qquad \text{for $D_0 = D \cap
    \{1,\ldots,p^n\}$}.
  \]
  Observe that $\lvert D_0 \rvert = 2m-1$ and that, for each
  $k \in \mathbb{N}_0$, the set
  $(2k p^n + D_0) \cup ((2k+1)p^n + D_0)$ consists of $2m-1$ odd and
  $2m-1$ even numbers.

  For each $j \in D$ with $j \equiv_2 0$ there exists $g_j \in L$ with
  $g_j \equiv_{P_{j+1}(G) Z} c_j$ and we deduce that
  \[
  z_{j+1} = [c_j,c_1] \equiv_{P_{j+2}(G)}
  [g_j,c_1] \in K \cap Z.
  \]
  For $i = 2p^n q + r \in \mathbb{N}$, where $q,r \in \mathbb{N}_0$
  with $0 \le r < 2p^n$, the count
  \[
  \lvert \{ j \in D \mid j \equiv_2 0 \text{ and } j < i-1 \} \rvert
  \geq q (2m-1) -1
  \]
  yields
  \[
  \log_p \lvert (K \cap Z) (P_i(G) \cap Z) : P_i(G) \cap Z \rvert
  \ge q (2m-1) -1.
  \]
  From Corollary~\ref{cor:lower-p-central-Gk} we observe that, for
  $i \ge 3$,
  \[
  \log_p \lvert Z : P_i(G) \cap Z \rvert = \lfloor \nicefrac{i}{2} 
  \rfloor \le
  q p^n + p^n. 
  \]
  These estimates show that \eqref{equ:last-reduction} holds.
\end{proof}


\appendix

\section{The case $p=2$} \label{sec:case-p-2}

When $p$ is even, Theorems~\ref{thm:main-thm} and
\ref{thm:entire-spectrum}, and all the results of
Sections~\ref{sec:prelim} and \ref{sec:general-description}, hold with
corresponding proofs.  The structural results of
Sections~\ref{sec:p-power-series} and \ref{sec:other-series} however
are slightly different and we now sketch these differences below; for
complete details, we refer the reader to the supplement
\cite{supplement}.

Firstly, for $p=2$,
\begin{equation}\label{equ:pres-for-G-k-p2}
  \begin{split}
    G_k = F/N_k & \cong \langle x,y \mid \phantom{[}x^{2^{k+1}},\;
    y^{4},\; [x^{2^k},y],\; [y^2,x];\\
    & \qquad \quad [y_0,y_i]^2,\; [y_0,y_i,x],\; [y_0,y_i,y] \quad
    \text{for $1\le i\le 2^{k-1}$} \rangle
  \end{split}
\end{equation}
for $k \in \mathbb{N}$, and 
\begin{equation}\label{equ:pres-for-G-p2}
  G \cong \langle x,y \mid y^{4},\; [y^2,x];\; [y_0,y_i]^2,\;
  [y_0,y_i,x],\; [y_0,y_i,y] \quad \text{for $i \in \mathbb{N}$} \rangle
\end{equation}
is a presentation of $G$ as a pro-$2$ group. 

Next, we have $\log_2|G_k|=2^k+2^{k-1}+k+2$ and the exponent of
$\gamma_2(G_k)$ is 4. With regards to Lemma~\ref{lem:w-elements} , the
elements
\[
w = y_{2^k-1}\cdots y_1y_0 \qquad \text{and} \qquad
[w,x]=[w,y]=[y_0,y_{2^{k-1}}]
\]
are of order $2$ in $G_k$ and lie in $G_k^{\, 2^k}$.  In particular
the subgroup $\langle x^{2^k},w,[w,x]\rangle$ is isomorphic to
$C_2\times C_2\times C_2$ and lies in $G_k^{\, 2^k}$.  Hence, for
$k\ge 2$,
\[
G_k^{\, 2^k} =\langle x^{2^k}, w, [w,x]\rangle \cong C_2 \times C_2
\times C_2, \qquad \log_2 \vert G_k : G_k^{\, 2^k} \rvert = \log_2
\lvert G_k \rvert - 3
\]
and
\begin{equation*}
  \begin{split}
    G_k / G_k^{\, 2^k} & \cong \langle x,y \mid x^{2^{k}},\; y^{4},\;
    [y^2,x],\; w(x,y),\; [y_0,y_{2^{k-1}}];\\
    & \qquad \quad [y_0,y_i]^2,\; [y_0,y_i,x],\; [y_0,y_i,y] \quad
    \textup{for } 1\le i< 2^{k-1} \rangle.
  \end{split}
\end{equation*}
  
Lemma~\ref{lem:contained} is slightly different; here the group $G_k$
satisfies
$G_k^{\, 2} \subseteq \langle x^{2}, y^2\rangle\gamma_{2}(G_k)$ and
\begin{equation*}
  G_k^{\, 2^j} \subseteq \langle
  x^{2^j},[y,x,\overset{2^{j}-3}\ldots,x,y]\rangle\gamma_{2^{j}}(G_k)
  \subseteq \langle x^{2^j}\rangle\gamma_{2^{j}-1}(G_k)\quad \text{
    for }j\ge 2. 
\end{equation*}
The proof is similar, but one needs the fact
\[ 
[y,x,\overset{i}\ldots,x]^2\in [H_k,H_k]\cap
\gamma_{2i+1}(G_k),\quad \text{ for } i\ge 1,
\]
which is proved by induction, using
\[ 
[[y,x,\overset{i-1}\ldots, x],x]^2= [[y,x,\overset{i-1}\ldots, x]^x,
[y,x,\overset{i-1}\ldots, x]^{-1}] \qquad \text{for $i\ge 2$.}
\]
Furthermore,
$[y,x,\overset{i}\ldots,x]^2\equiv [y,x,\overset{2i-1}\ldots,x,y]$
modulo $\gamma_{2i+2}(G_k)$.
  
\smallskip
  
The group $G_k$ is nilpotent of class $2^k+1$; its lower central
series satisfies
\[
G_k = \gamma_1(G_k) = \langle x,y \rangle \; \gamma_2(G_k) \quad
\text{with} \quad G_k/\gamma_2(G_k) \cong C_{2^{k+1}} \times C_{4}
\]
and, for $1 \le i \le 2^{k-1}$,
\begin{align*}
  \gamma_{2i}(G_k) & = \langle [y,x,\overset{2i-1}{\ldots},x] \rangle
                     \; \gamma_{2i+1}(G_k), \\
  \gamma_{2i+1}(G_k) & = 
                       \begin{cases}
                         \langle [y,x,\overset{2i}{\ldots},x],
                         [y,x,\overset{2i-1}{\ldots},x,y] \rangle \;
                         \gamma_{2i+2}(G_k) & \text{ for }i\ne 2^{k-1}\\
                         \langle [y,x,\overset{2i}{\ldots},x] \rangle
                         \; \gamma_{2i+2}(G_k) & \text{ for }i=
                         2^{k-1}
                       \end{cases}
\end{align*} 
with
\[
\gamma_{2i}(G_k)/\gamma_{2i+1}(G_k) \cong C_2 \quad \text{and} \quad
\gamma_{2i+1}(G_k)/\gamma_{2i+2}(G_k) \cong
\begin{cases}
  C_2 \times C_2 & \text{ for }i\ne 2^{k-1}\\
  C_2 &\text{ for }i=2^{k-1}.
\end{cases}
\]

The proof of the above is similar to that for the odd prime case,
however here one takes
\[
j_0 =
\begin{cases}
  2^{k-1} - \frac{m}{2} & \text{if $m \equiv_4
    0$,} \\
  2^{k-1}+1 - \frac{m}{2} & \text{if $m \equiv_4 2$.}
\end{cases}
\]
For the $m\equiv_4 0$ case, noting that
$e_{2^{k-1}}=[w,x]\in \gamma_{2^k+1}(G_k)$, we have
$b_{j_0,m} \equiv b_{j_0,m+1}$ modulo $\gamma_{m+1}(G_k)$. The
$m\equiv_4 2$ case is similar.
      
\smallskip
 
The lower $2$-series of $G_k$ has length $2^k+1$ and satisfies the
corresponding form, based on the lower central series of $G_k$ above.
      
\smallskip

The dimension subgroup series of $G_k$ has length~$2^k+2$.  For
$1\le i \le 2^k+2$, the $i$th term is
$D_i(G_k) =G_k^{\, 2^{l(i)}} \gamma_{\lceil
  \nicefrac{i}{2}\rceil}(G_k)^2\gamma_i(G_k)$,
where $l(i) = \lceil\log_2 i \rceil$.

Furthermore, if $i$ is not a power of $2$, equivalently if
$l(i+1)=l(i)$, then
$D_i(G_k)/D_{i+1}(G_k) \cong \gamma_{\lceil
  \nicefrac{i}{2}\rceil}(G_k)^2\gamma_i(G_k) /\gamma_{\lceil
  \nicefrac{i+1}{2}\rceil}(G_k)^2 \gamma_{i+1}(G_k)$ so that
\[
D_i(G_k) =
\begin{cases}
  \langle [y,x,\overset{i-1}{\ldots},x] \rangle D_{i+1}(G_k) &
  \text{if $i\equiv_2 1$,}  \\
  \langle [y,x,\overset{i-3}{\ldots},x,y], [y,x,\overset{i-1}\ldots,x]
  \rangle D_{i+1}(G_k) & \text{if $i\equiv_2 0$,}
\end{cases}
\]
with
\[
D_i(G_k)/D_{i+1}(G_k) \cong
\begin{cases}
  C_2 &
  \text{if $i\equiv_2 1$   and $i<2^k$,}\\
  C_2\times C_2 &
  \text{if $i\equiv_2 0$  and $i<2^k$,}\\
  1 &
  \text{if $i=2^k+1$,}\\
  C_2 &
  \text{if $i=2^k+2$.}\\
\end{cases}
\]
whereas if $i = 2^l$ is a power of~$2$, equivalently if
$ l(i+1) = l + 1$ for $l = l(i)$, then
$D_i(G_k)/D_{i+1}(G_k) \cong \langle x^{2^l} \rangle / \langle
x^{2^{l+1}} \rangle \times \langle y^{2^l} \rangle / \langle
y^{2^{l+1}} \rangle \times \langle
[y,x,\overset{i-3}\ldots,x,y]\rangle\gamma_i(G_k) / \gamma_{i+1}(G_k)$
so that
\begin{align*}
  & D_1(G_k) = \langle x,y \rangle D_2(G_k), \\
  & D_2(G_k) = \langle
    x^2,y^2, [y,x] \rangle D_{3}(G_k), \\
  & D_i(G_k) = \langle x^{2^l}, [y,x,\overset{i-3}\ldots,x,y],
    [y,x,\overset{i-1}{\ldots},x] \rangle D_{i+1}(G_k) 
\end{align*}
with
\[
D_i(G_k)/D_{i+1}(G_k)\cong
\begin{cases}
  C_2 \times C_2  & \text{if $i=1$, equivalently if $l = 0$,} \\
  C_2 \times C_2 \times C_2 & \text{if $i=2$,
    equivalently if $l = 1$,} \\
  C_2 \times C_2 \times C_2 & \text{if $i = 2^l$ with
    $2 \le l \le k$.}
\end{cases}
\]
  
In particular, for $2^{k-1}+1 \le i \le 2^k$ and thus $l(i)=k$,
\[
D_i(G_k) = G_k^{\, 2^k} \gamma_i(G_k) = \langle x^{2^k}, [y,x,
\overset{2^k-3}{\ldots}, x,y] \rangle \; \gamma_i(G_k),
\]
so that
\[
\log_2\lvert D_i(G_k) \rvert = \log_2\lvert \gamma_i(G_k) \rvert +1.
\]
      
\smallskip 
      
Lastly, the Frattini series of $G_k$ has the corresponding form,
though it has length $k+1$.


\end{document}